\newcommand{\Rmnum}[1]{\expandafter\@slowromancap\romannumeral #1@}
\newtheorem{definition}{Definition}[section]
\newtheorem{theorem}{Theorem}[section]
\newtheorem{lemma}{Lemma}[section]
\newtheorem{corollary}{Corollary}[section]
\newtheorem{proposition}{Proposition}[section]
\newtheorem{remark}{Remark}[section]
\newcommand{\al}{\alpha}
\newcommand{\ga}{\gamma}
\newcommand{\Ga}{\Gamma}
\newcommand{\dl}{\Delta}
\newcommand{\e}{\varepsilon}
\newcommand{\iy}{\infty}
\newcommand{\la}{\lambda}
\newcommand{\vp}{\varphi}
\newcommand{\pa}{\partial}
\newcommand{\ti}{\tilde}
\newcommand{\vs}{\vskip .25in}
\newcommand{\ra}{\rightarrow}
\newcommand{\rh}{\rightharpoonup}
\newcommand{\lab}{\label}
\newcommand{\f}{\frac}
\newcommand{\bt}{\begin{theorem}}
	\newcommand{\et}{\end{theorem}}
\newcommand{\bl}{\begin{lemma}}
	\newcommand{\el}{\end{lemma}}
\newcommand{\bd}{\begin{definition}}
	\newcommand{\ed}{\end{definition}}
\newcommand{\bc}{\begin{corollary}}
	\newcommand{\ec}{\end{corollary}}
\newcommand{\bp}{\begin{proof}}
	\newcommand{\ep}{\end{proof}}
\newcommand{\bx}{\begin{example}}
	\newcommand{\ex}{\end{example}}
\newcommand{\bi}{\begin{exercise}}
	\newcommand{\ei}{\end{exercise}}
\newcommand{\br}{\begin{remark}}
	\newcommand{\er}{\end{remark}}
\newcommand{\be}{\begin{equation}}
	\newcommand{\ee}{\end{equation}}
\newcommand{\bal}{\begin{align}}
	\newcommand{\bn}{\begin{enumerate}}
		\newcommand{\en}{\end{enumerate}}
	\newcommand{\ba}{\begin{align}}
		\newcommand{\ea}{\begin{align}}
			\newcommand{\bg}{\begin{align*}}
				\newcommand{\eg}{\end{align*}}
			\newcommand{\bcs}{\begin{cases}}
				\newcommand{\ecs}{\end{cases}}
			\newcommand{\CR}{{\cal C}}
			\newcommand{\DR}{{\cal D}}
			\newcommand{\FR}{{\cal F}}
			\newcommand{\PR}{{\cal P}} %%»¨Ìå×ÖÄ¸P
			\newcommand{\SR}{{\cal S}} %%»¨Ìå×ÖÄ¸S
			\newcommand{\TR}{{\cal T}} %%»¨Ìå×ÖÄ¸S
			\newcommand{\C}{{\mathbb C}}
			\newcommand{\R}{{\mathbb R}}
			\newcommand{\RN}{{\mathbb R^N}}  %%%´Ë´¦¼ÓÉÏÀ¨ºÅ£¬ºóÃæ²»ÓÃ¼Ó£¬ÇÒ²»ÐèÒªÊ±tex²»ÏÔÊ¾
			\newcommand{\bean}{\begin{eqnarray*}}
				\newcommand{\eean}{\end{eqnarray*}}
			\newcommand{\sbr}[1]{\left(#1\right)}%%%Ô²À¨ºÅ
			\newcommand{\mbr}[1]{\left[#1\right]}%%%·½À¨ºÅ
			\newcommand{\lbr}[1]{\left\{#1\right\}}%%%»¨À¨ºÅ
			\newcommand{\rd}{\mathrm d}
			\newcommand{\s}{\star}
			\newcommand{\dx}{ ~\mathrm{d} x}
			\newcommand{\nm}[1]{\Vert #1 \Vert}%%·¶Êý=norm
			\newcommand{\np}[1]{\Vert #1 \Vert_p}%%p-·¶Êý
			\newcommand{\nt}[1]{\Vert #1 \Vert_2}%%2-·¶Êý
			\newcommand{\ns}[1]{\Vert #1 \Vert_{2^*}}%%2^*-·¶Êý
			\numberwithin{equation}{section}
\begin{document}
				\theoremstyle{plain}

				\title{\bf  Normalized solutions for a class of Sobolev critical Schr\"odinger systems\thanks{ 	
					 E-mails: lhwmath@bimsa.cn (H. W. Li),  liuth19@mails.tsinghua.edu.cn (T. H. Liu),   zou-wm@mail.tsinghua.edu.cn (W. M. Zou)}}
				
				\date{}
				\author{
					{\bf Houwang Li$^1$,\; Tianhao Liu$^2$,\; Wenming Zou$^3$}\\
					\footnotesize	\it  1. Beijing Institute of Mathematical Sciences and Applications, Beijing, 100084, China.	\\
					\footnotesize \it 2.  Institute of Applied Physics and Computational Mathematics,  Beijing,  100088,  China.	\\ 
					\footnotesize \it 3.   Department of Mathematical Sciences, Tsinghua University, Beijing, 100084, China.
				}

				%%%%%%%-------------------------------ÕªÒª===Abstract-------------------------------
				\maketitle
				\begin{center}
					\begin{minipage}{120mm}
						\begin{center}{\bf Abstract }\end{center}
						
						This paper focuses on the existence and multiplicity of normalized solutions for the following coupled  Schr\"odinger system with Sobolev critical coupling term:
						\begin{equation*}
							\left\{ 	\begin{aligned}
								&-\dl u+\la_1 u=\omega_1|u|^{p-2}u+\f{\al\nu}{2^*} |u|^{\al-2}|v|^\beta u,\quad \text{in }\RN,\\
								&-\dl v+\la_2 v=\omega_2|v|^{p-2}v+\f{\beta\nu}{2^*} |u|^\al |v|^{\beta-2}v,\quad \text{in }\RN,\\
								&\int_\RN u^2\dx =a^2,\int_\RN v^2\dx=b^2,
							\end{aligned}	\right.
						\end{equation*}
						where	$N\geq 3$, $a,b>0$,  $\omega_1,\omega_2, \nu\in\R\setminus\lbr{0}$,  and the exponents $p,\al,\beta$ satisfy
						\be 
	\al>1,~\beta>1,~~\al+\beta=2^*,~~2<p\leq 2^*={2N}/\sbr{N-2}.
						\ee
		The parameters $\lambda_1, \lambda_2\in \R$ will arise as  Lagrange multipliers	 that are not  prior given. This paper mainly presents several existence and multiplicity  results under explicit conditions on $a,b$ for the focusing case $\omega_1, \omega_2> 0$ and  attractive case $\nu >0$:
				\medbreak		
 	 	\quad (1) When $2<p<2+4/N$,	we prove that there exist two solutions: one is a local minimizer, which  serves as a normalized ground state, and the other is of mountain-pass type, which is a normalized excited state.
 	 	
						\vskip 0.03in
						\quad (2) When $2+4/N\leq p<2^*$, we prove that there exists   a mountain-pass type solution, which   serves as a normalized ground state.
						\vskip 0.03in
						\quad (3) When  $p = 2^*$, the existence and classification of normalized ground states are provided for and $N \geq 5$, alongside a non-existence result for $N = 3,4$.  These results reflect the properties of the Aubin-Talenti bubble, which attains the best Sobolev embedding constant.
						
					 \medbreak
						Furthermore,  we present  a  non-existence result for the defocusing case $\omega_1,\omega_2<0$.
						This paper, together with the paper [T. Bartsch,  H. W. Li and W. M. Zou.  
						{\it Calc. Var. Partial Differential Equations}  62 (2023) ],  provides a more comprehensive understanding of normalized solutions for Sobolev critical systems.      We believe our methods can also address the open problem of the multiplicity of normalized solutions for Schr\"odinger systems with Sobolev critical growth, with potential for future development and broader applicability.

						%For $2<p<2+\f{4}{N}$, we obtain a normalized ground state, which is a local minimizer. Furthermore, we   show that the system has a second normalized solution which is  of mountain pass type and not a ground state. This result is not only interesting and meaningful but also challenging to obtain without new ideas and innovative techniques. 	For $2+\f{4}{N}\leq p<2^*$, we establish the existence of a normalized ground state  solution with mountain pass type  under some certain assumptions.
						
						%Moreover, in the repulsive case $\nu<0$ and $L^2$-subcritical  case  $2<p<2+\frac{4}{N}$ with $N=3$ ,  we  show that the system has a normalized solution with least energy among all radially symmetric normalized solution, provided that $ 	\alpha,\beta \geq  p$.
						
						%Moreover, in the case $\nu<0$ and $2<p<2+\frac{4}{N}$ with $N=3$, we also show the existence of normalized solutions, provided that $ 	\alpha,\beta \geq  p$. Finally, when both $p$ and $\al+\beta$  are Sobolev critical and  $\nu > 0$, we obtain a nonexistence result for $N=3,4$,   and a existence result for $N\geq 5$.

						\vskip0.23in
						{\bf Key words:}   Sobolev critical system; Normalized solution; Existence and multiplicity.

						\vskip0.1in
						{\bf 2020 Mathematics Subject Classification:} 35A01, 35J10, 35J50.
						
						\vskip0.23in
						
					\end{minipage}
				\end{center}

				%%%%%%-------------------------------Section 1=======ÒýÑÔ====Introduction-------------------------------
				\vskip0.23in
				\section{Introduction and statement of results}
			 
%			 The 

			 \medbreak
%			 System \eqref{equ2} arises  when looking for  solitary wave solutions of the following  time-dependent coupled nonlinear  
		 The following Schr\"odinger  system describes the dynamics of  coupled nonlinear  waves 
			 	\be\label{equ1}
			 \left\{ 	\begin{aligned}
			 	 	&-i\f{\pa}{\pa t}\Phi_1=\dl \Phi_1+\omega_1 |\Phi_1|^{p-2}\Phi_1+\f{\al\nu}{\al+\beta} |\Phi_1|^{\al-2}|\Phi_2|^\beta\Phi_1,\\
			 		&-i\f{\pa}{\pa t}\Phi_2=\dl \Phi_2+\omega_2 |\Phi_2|^{p-2}\Phi_2+\f{\beta\nu}{\al+\beta}|\Phi_1|^{\al}|\Phi_2|^{\beta-2}\Phi_2,\\
			 		&\Phi_j=\Phi_j(x,t)\in\C,\quad(x,t)\in\RN\times\R,~j=1,2,\\
			 		 \end{aligned}	\right.
			 					 \ee 
		   where $i$ is the imaginary unit, and the parameters satisfy	 
			  \begin{equation}
			  	N\geq 3,  ~~ \omega_1,\omega_2, \nu\in\R\setminus\lbr{0}  ~\text{ and } ~ ~\al, \beta>1,~~   2< p, \alpha +\beta  \leq 2^*:={2N}/\sbr{N-2}. 
			  \end{equation}		  
			 System \eqref{equ1} arises in various physical models and has been extensively studied in recent years.
		For instance,  it appears in Hartree-Fock theory for a double condensate, specifically describing a binary mixture of Bose-Einstein condensates in two different hyperfine states.   In particular,  when $p=4,\al=\beta=2$, system \eqref{equ1} reduces to the well-known {\it Gross-Pitaevskii system},  which is widely applied in nonlinear optics. Moreover, certain models for ultracold quantum gases involve different exponents; see \cite{BG-2,BG-1,BG-3,Adhikari,Malomed}.
		For more background on system \eqref{equ1}, we refer to \cite{Peral etc=CVPDE=2009,Chen-Zou=CVPDE=2014,Chen-Zou=TransAMS=2015} and the references therein.
		\medbreak
From a physical perspective,  the solutions $\Phi_1$ and $\Phi_2$ represent  condensate amplitudes corresponding to different condensates, while the parameters $\omega_1$ and $\omega_2$  correspond to self-interactions within each component. These self-interactions are  called {\it focusing} when the sign is positive, and {\it defocusing} when negative.  The coupling constants $\alpha\nu /(\alpha+\beta)$ and $\beta\nu /(\alpha+\beta)$ define the strength and type of interaction between components $\Phi_1$ and $\Phi_2$.  The sign of $\nu$ whether the interaction between the two states is attractive or repulsive:  it is {\it attractive} when    $\nu>0$, and   {\it repulsive} when  $\nu<0$, indicating strong competition between the two states.

		 \medbreak 
			 
			   An important feature of system \eqref{equ1} is that any solution   satisfies the  conservation of mass, which plays a crucial role in the dynamics and stability of the system.  More precisely, the following $L^2$-norms
			  \begin{equation}
			  	\left\|\Phi_1(\cdot,t) \right\|_{L_x^2(\RN)}^2 ~ \text{ and } ~	\left\|\Phi_2(\cdot,t) \right\|_{L_x^2(\RN)}^2
			  \end{equation}
			  are independent of  time $t\in \R$. These norms have important physical significance:    in Bose–Einstein condensates,  the $L^2$-norms represent the number of particles of each component; in nonlinear optics framework,     they correspond to the power supply;			  
			and   in Hartree-Fock theory,
	they  represent the mass of condensate. 
			  
		 \medbreak
			 Among all  the solutions, the study of  {\it solitary waves}  is particularly important for system \eqref{equ1}, as the ground state  plays a crucial role in the dynamics of the solutions; for example, the determination of the threshold of the scattering
			 solutions, see \cite{Kenig2006,Kenig2008,Dodson,Duyckaerts,Holmer,Ibrahim}, and the universal profile of collapsing solutions for the critical nonlinear Schr\"odinger equations, see  \cite{Hmidi,Merle,Raphael} and the references therein.	
			 The  ansatz  $\Phi_1(x,t)=e^{i\la_1 t}u(x)$ and $\Phi_2(x,t)=e^{i\la_2t}v(x)$   for solitary wave solutions of  system \eqref{equ1} leads to the following  steady-state  coupled nonlinear  Schr\"odinger  system   
			 \be		 \label{equ2}
			 \left\{ 	\begin{aligned}
			 	&-\dl u+\la_1 u=\omega_1 |u|^{p-2}u+\f{\al\nu}{\al+\beta} |u|^{\al-2}|v|^\beta u,\quad \text{in }\RN,\\
			 	&-\dl v+\la_2 v=\omega_2 |v|^{p-2}v+\f{\beta\nu}{\al+\beta} |u|^\al |v|^{\beta-2}v,\quad \text{in }\RN,\\
			 	& \ \ u(x),  v(x)\to0~~\text{as }|x|\to\iy,
			 \end{aligned}	\right.
			 \ee

		For the study of  system \eqref{equ2},  there are   two distinct options concerning the frequency
		parameter $\lambda_1$ and $\lambda_2$, leading to two different research directions.   One  direction is to fix the parameters $\lambda_1, \lambda_2 > 0$. The existence and multiplicity of solutions to \eqref{equ2} have been extensively investigated over the past two decades. Numerous relevant studies can be found in the literature; we refer to \cite{Ambrosetti-Colorado=2007,Lin-Wei=CMP=2005,Sirakov=CMP=2007,
			Chen-Zou=CVPDE=2014,Chen-Zou=TransAMS=2015,Peral etc=CVPDE=2009,Bartsch-Dancer-Wang 2010,Bartsch-wang-2006,Bartsch2007} and the references therein.
		\medbreak
	In this paper, we focus on a different direction, specifically  investigating the solutions to system \eqref{equ2} that have a prescribed $L^2$-norm, which are 	commonly referred to as {\it normalized solutions}. In this direction, the parameters $\lambda_1, \lambda_2 \in \mathbb{R}$ cannot be prescribed but appear as multipliers with respect to the constraint   $L^2$-torus
		\be\label{masscon}
		\TR(a,b):=\Big\{(u,v)\in  H:\nt{u}^2=a^2,~\nt{v}^2=b^2 \Big\}.
		\ee
		where $H:=H^1(\RN)\times H^1(\RN)$ and $\np{ u}$ to denote the $L^p$-norm of $u$. 
		\medbreak
		From now on, we focus on the following normalized problem 
			\be\label{mainequ}
		\left\{ 	\begin{aligned}
			&-\dl u+\la_1 u=\omega_1|u|^{p-2}u+\f{\al\nu}{\alpha+\beta} |u|^{\al-2}|v|^\beta u,\quad \text{in }\RN,\\
			&-\dl v+\la_2 v=\omega_2|v|^{p-2}v+\f{\beta\nu}{\alpha+\beta} |u|^\al |v|^{\beta-2}v,\quad \text{in }\RN,\\
			&\int_\RN u^2\dx=a^2,\int_\RN v^2\dx=b^2,
		\end{aligned}	\right.
		\ee
		
From a variational point of view, 		 normalized solutions to problem \eqref{mainequ} can be obtained as critical points of the energy functional
		$I(u,v):  H^1(\RN)\times H^1(\RN) \to\R$,
		\be\label{energy}
		I(u,v):=\f{1}{2}\int_\RN \sbr{|\nabla u|^2+|\nabla v|^2} \dx
		-\f{1}{p}\int_{\RN} \sbr{|u|^p+|v|^p}\dx -\f{\nu}{\alpha+\beta}\int_{\RN}|u|^\al|v|^\beta\dx
		\ee
		on the constraint $	\TR(a,b)$
		%				\be\label{masscon}
		%				\TR(a,b):=\Big\{(u,v)\in  H^1(\RN)\times H^1(\RN):\nt{u}=a,~\nt{v}=b \Big\},
		%				\ee
		with parameters $\la_1,\la_2\in\R$ appearing as Lagrange multipliers. Since $\al,\beta>1$, it is standard to conclude that $I(u,v)$ is of class $C^1$. Mathematicians and physicists typically focus on the solution with the least energy, as this often corresponds to good" behavior in physical terms. In this paper, we are particularly interested in the \textit{normalized ground state} that minimizes energy. Moreover, the existence of a \textit{normalized excited state} is also significant in physics.
		
	\begin{definition} {\rm A solution $(u_0,v_0)$ is  called a {\it normalized ground state } to system \eqref{mainequ} if  it satisfies
		\begin{equation}
			I|_{\TR (a,b)}'(u_0,v_0)=0,  \quad \text{ and }	 \quad 	I(u_0,v_0)=\inf\{I(u,v):~I|_{\TR(a,b)}'(u,v)=0, \ \ (u,v)\in \TR (a,b)\}.
		\end{equation}
		A solution $(u_0,v_0)$ is called  a {\it normalized excited state } to system \eqref{mainequ} if  it satisfies
		\begin{equation}
			I|_{\TR (a,b)}'(u_0,v_0)=0,  \quad \text{ and }	 \quad 	I(u_0,v_0)>\inf\{I(u,v):~I|_{\TR(a,b)}'(u,v)=0, \ \ (u,v)\in \TR (a,b)\}.
		\end{equation}	}
\end{definition}
\vskip 0.15in
The normalized ground state energy  $m_g(a,b)$ can be  defined by
\begin{equation*}
	m_g(a,b):=\inf_{(u,v)\in \TR (a,b)}I(u,v).
\end{equation*}
Moreover, the existence of   normalized ground state solution depends on    whether $m_g(a,b)$  can be attained.
From a mathematical perspective, new difficulties arise in the search for normalized solutions, making this problem particularly challenging.  For instance,  the
appearance of the constraint $\TR(a,b)$  makes lots of classical variation methods can not be applicable directly since $\la_1$, $\la_2$ are not given prior, and the existence of bounded
Palais–Smale sequences requires new arguments (the classical method  used to establish the boundedness of any Palais–Smale sequence for unconstrained Sobolev-subcritical problems fails to apply in this case). Moreover,  in the mass supercritical case where $p>2+\frac{4}{N}$ or $\alpha+\beta>2+\frac{4}{N}$, there are bounded Palais-Smale sequences that do not have a convergent
subsequence and converge weakly to $0$, since the  $L^2$-torus $\TR(a,b)$ is not a weak compact submanifold in $H$ (even in the radial case).

 \medbreak
    
System \eqref{mainequ}  can be viewed naturally as a counterpart to the following scalar equation, that is
\be\label{single1}
 \begin{aligned}
	&-\dl u+\la u=|u|^{p-2}u \quad \text{in}~\RN, \quad  \nt{u}^2=a^2.
\end{aligned}
\ee
which  has received a lot of attention in recent years. Over the past decades, many articles have investigated the existence of normalization solutions for equation \eqref{single1}  and systems \eqref{equ2},  proposing various methods to overcome the above difficulties. We refer the reader to \cite{Ikoma-Tanaka=AdvDE=2019,Jeanjean=1997,Jeanjean-Lu=CVPDE=2020,Jeanjean=2020,Soave=JDE=2020,Soave=JFA=2020,Wei-Wu=JFA=2022} for scalar equations,
\cite{Bartsch-Jeanjean=2018,Bartsch-Jeanjean-Soave=JMPA=2016,
	Bartsch-Soave=JFA=2017,
	Bartsch-Soave=CVPDE=2019,
	Bartsch-Zhong-Zou=MathAnn=2020,
	Gou-Jeanjean=2016,Gou-Jeanjean=2018,Li-Zou-1}
for  system of two equations, and \cite{Mederski-Cvpde2024,Schino-ADE2022} for system of $k$ equations.
%We refer to \cite{Bartsch-Jeanjean=2018, Bartsch-Jeanjean-Soave=JMPA=2016, Bartsch-Soave=JFA=2017, Bartsch-Soave=CVPDE=2019, Bartsch-Zhong-Zou=MathAnn=2020, Gou-Jeanjean=2016, Gou-Jeanjean=2018, Ikoma-Miyamoto=2020, Ikoma-Tanaka=AdvDE=2019, Li-Zou-1, Wei-Wu=JFA=2022} and the references therein.
However, these papers on  system  mainly focus on  the Sobolev subcritical case, and there are few results on  the Sobolev critical  system.
\medbreak
Recently, Bartsch, Li and Zou \cite{Bartsch-Li-Zou=2023} investigate the existence of normalized solutions to the system \eqref{mainequ} with a Sobolev critical nonlinearity $p=2^*$  and  a   subcritical coupling term $2<\al+\beta<2^*$ when $N=3,4$. They proposed the issue of multiplicity as an open problem in \cite[Remark 1.3]{Bartsch-Li-Zou=2023}.  To the best of our knowledge, there are no paper consider {\it existence,  multiplicity} of normalized solutions to system \eqref{mainequ}   under the following assumptions
\be\label{assumption1}
N\geq 3, ~~a,b >0 , ~~ \omega_1,\omega_2, \nu\in\R\setminus\lbr{0}  ~\text{ and }  ~2<p\leq 2^*, ~\al+\beta=2^*,~ \al,\beta>1.
\ee
The research we present here is a contribution to improve the picture of the  Sobolev critical situation.

 \medbreak

			First, for the  defocusing case $\omega_1<0$ and $\omega_2<0$, we observe that the following non-existence results.
				\begin{theorem}\label{thm5}
					Let $N\geq 3$, $\omega_1<0$ and $\omega_2<0$. Assuming that the exponents satisfy $2<p< 2^*$ and $\al+\beta=2^*$ with 
					\begin{equation}\label{f13}
						\begin{cases}
							\al>1,~\beta>1,  \quad \text{ if } \nu>0,\\
							\alpha\geq 2, ~\beta\geq 2,   \quad \text{ if } \nu<0,
						\end{cases}
					\end{equation}
				then	we have the following
					\begin{itemize}[fullwidth,itemindent=0em]
						\item[(1)]  if $N=3,4$, then the system \eqref{mainequ}  has no positive normalized solution $(u,v)\in H^1(\RN)\times H^1(\RN) $.
						\item[(2)]  if $N\geq 5$, then the system \eqref{mainequ}   has no positive normalized solution  $(u,v)\in H^1(\RN)\times H^1(\RN) $ satisfying the additional assumption that $u,v \in L^q(\RN) $ for some $q\in (0, N/(N-2)] $.
					\end{itemize}
				\end{theorem}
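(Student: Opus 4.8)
The plan is to rule out positive normalized solutions by extracting two mutually contradictory pieces of information about the Lagrange multipliers $\la_1,\la_2$: a sign coming from the Pohozaev--Nehari identities, and the opposite sign coming from a comparison argument at infinity.

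First I would record the global identities valid for any sufficiently regular, decaying positive solution $(u,v)$. Testing the two equations of \eqref{mainequ} with $u$ and $v$ and adding gives the Nehari identity $\int_\RN(|\nabla u|^2+|\nabla v|^2)\dx+\la_1a^2+\la_2b^2=\omega_1\np{u}^p+\omega_2\np{v}^p+\nu\int_\RN|u|^\al|v|^\beta\dx$, while pairing with $x\cdot\nabla u$ and $x\cdot\nabla v$ yields the Pohozaev identity $\f{N-2}{2}\int_\RN(|\nabla u|^2+|\nabla v|^2)\dx+\f{N}{2}(\la_1a^2+\la_2b^2)=N\big(\f{\omega_1}{p}\np{u}^p+\f{\omega_2}{p}\np{v}^p+\f{\nu}{2^*}\int_\RN|u|^\al|v|^\beta\dx\big)$. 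Since $\al+\beta=2^*$ forces $N/2^*=(N-2)/2$, the coupling integral can be eliminated between the two identities, leaving the clean relation
\[
\la_1a^2+\la_2b^2=\f{2N-(N-2)p}{2p}\sbr{\omega_1\np{u}^p+\omega_2\np{v}^p}.
\]
For $2<p<2^*$ the prefactor is strictly positive, and since $\omega_1,\omega_2<0$ with $u,v\not\equiv0$ the bracket is strictly negative; hence $\la_1a^2+\la_2b^2<0$, so at least one multiplier is negative.

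Next I would prove that, on the contrary, $\la_1\ge0$ and $\la_2\ge0$. Arguing by contradiction, suppose $\la_1<0$. By elliptic regularity $u,v$ are classical and decay to $0$ at infinity, so the zeroth order coefficient of the first equation, $\omega_1|u|^{p-2}+\f{\al\nu}{2^*}|u|^{\al-2}|v|^\beta-\la_1$, tends to $-\la_1>0$ as $|x|\to\iy$. This is where the hypotheses \eqref{f13} enter: when $\nu>0$ the coupling term is nonnegative and may simply be discarded, so only $p>2$ is needed; when $\nu<0$ the requirement $\al,\beta\ge2$ guarantees $|u|^{\al-2}|v|^\beta\to0$, so the coupling term again vanishes at infinity. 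In either case $-\dl u\ge0$ outside a large ball $B_R$, i.e. $u$ is positive and superharmonic there and decays to $0$, so the comparison principle yields the lower bound $u(x)\ge c|x|^{-(N-2)}$ for $|x|$ large.

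Finally I would close with an integrability contradiction, and it is precisely here that the dimension splits as in the statement. When $N=3,4$ one has $|x|^{-(N-2)}\notin L^2(\RN\setminus B_R)$, contradicting $u\in H^1(\RN)\subset L^2(\RN)$; this needs no extra hypothesis and gives part (1). When $N\ge5$ the function $|x|^{-(N-2)}$ does lie in $L^2$, so $H^1$-membership no longer suffices, and one invokes the additional assumption $u\in L^q$ with $q\le N/(N-2)$, using that $|x|^{-(N-2)}\notin L^q(\RN\setminus B_R)$ exactly when $q\le N/(N-2)$; this gives part (2). In both cases $\la_1<0$ is impossible, and symmetrically $\la_2\ge0$, which contradicts $\la_1a^2+\la_2b^2<0$. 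The step I expect to be the main obstacle is not the algebra but the qualitative analysis underlying it: the regularity and pointwise decay $u,v\to0$ in the Sobolev-critical regime (also needed so that the boundary terms in the Pohozaev identity vanish), and the verification of super-harmonicity near infinity, where the sign of $\nu$ and the size of $\al,\beta$ interact in an essential way and account for the two distinct assumptions in \eqref{f13}; once super-harmonicity is in hand, the comparison lower bound and the integrability contradiction are standard.
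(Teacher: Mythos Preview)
Your proposal is correct and follows essentially the same route as the paper: derive $\la_1a^2+\la_2b^2=(1-\ga_p)\sbr{\omega_1\np{u}^p+\omega_2\np{v}^p}<0$ (your coefficient $\f{2N-(N-2)p}{2p}$ equals $1-\ga_p$), assume without loss $\la_1<0$, use regularity and decay to get super-harmonicity of $u$ outside a large ball, extract the lower bound $u(x)\ge c|x|^{-(N-2)}$, and conclude via non-integrability in $L^2$ for $N=3,4$ and in $L^q$, $q\le N/(N-2)$, for $N\ge5$. The paper phrases the lower-bound step through the Hadamard three spheres theorem applied to $m(r)=\min_{|x|=r}u(x)$ rather than a comparison principle, but this is the same mechanism; your explanation of how \eqref{f13} enters the super-harmonicity argument is in fact slightly more explicit than the paper's.
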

%				\begin{remark}
%					{\rm The assumption \eqref{f13} is crucial for establishing an important Liouville-type result. When $\nu>0$, this assumption is naturally justified. However, when $\nu<0$, the conditions $\alpha\geq 2, \beta\geq 2, \alpha+\beta=2^*$ imply that $2^*\geq 4$, which is guaranteed only by $N=3,4$. For $N\geq 5$ in the case where $\nu<0$, we are currently uncertain whether a positive normalized solution exists. We point out that this dimension limitation is just a byproduct of the method employed to prove the Liouville-type result, and we don't know whether this limitation is essential or not, so it will be interesting to investigate in that direction.}
%				\end{remark}
%				
				\vskip 0.1in
	Therefore,  by Theorem \ref{thm5},   we will now focus on the focusing case  $\omega_1 > 0$ and $\omega_2 > 0$. Without loss of generality, we always assume $\omega_1 = \omega_2 = 1$ for simplicity.
				To present our results regarding the Sobolev critical system \eqref{mainequ}, we first need to introduce some established results in  \cite{Soave=JDE=2020} for the scalar equation \eqref{single1}, which play a crucial role in our argument.
				In variational approach, normalized solutions of \eqref{single1} are obtained as critical points
				of the associated energy functional 
					$$E(u):=\f{1}{2}\int_\RN|\nabla u|^2\dx-\f{1}{p}\int_\RN|u|^p\dx,\quad\text{for}~u\in H^1(\RN),$$
				on the constraint $S(a)= \lbr{ u\in H^1(\RN): \nt{u}^2=a^2}$. Moreover,  the   least energy is defined by
				\begin{equation}\label{defi of e(a)}
					e(a):=\inf_{u\in S(a)}E(u) =\inf\Big\{E(u):u\in H^1(\RN),~\nt{u}^2=a^2~\text{and}~\nt{\nabla u}^2=\ga_p\np{u}^p \Big\},
				\end{equation}
				where the parameter $\ga_p$ is denoted as 
				\be\label{sim1}
				\ga_p:=\f{N(p-2)}{2p}.  
				\ee		
			  We then have the following results, which are established in \cite{Soave=JDE=2020}.
				\newtheorem*{thmA}{Theorem A}
				\begin{thmA}\label{thmA}
					Let $N\ge3$, $p \in \sbr{2,2^*}\setminus \lbr{ 2+\f{4}{N}}$.
					Then up to a translation, scalar equation \eqref{single1} has a unique positive normalized 	solution $u_p$ with Lagrange multiplier $\la>0$, and 
					\begin{itemize}[fullwidth,itemindent=0em]
						\item[(1)]	if  $p<2+\f{4}{N}$, then
						\be
						e(a)=\inf_{\nt{u}^2=a^2}E(u)=E(u_p)<0;
						\ee
						\item[(2)]	if $p>2+\f{4}{N}$, then
						\be
						e(a)=\inf_{\nt{u}^2=a^2}\max_{t\in\R}E(e^{\f{N}{2}t}u(e^t\cdot))=\max_{t\in\R}E(e^{\f{N}{2}t}u_p(e^t\cdot))=E(u_p)>0;
						\ee
					\end{itemize}
				 Moreover,  in  both cases, the least energy $e(a)$ is strictly decreasing with respect to $a>0$.
				\end{thmA}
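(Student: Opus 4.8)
The plan is to reduce the entire statement to properties of the unique positive ground state $U$ of the unconstrained equation $-\dl U+U=U^{p-1}$, to treat the two mass regimes by different variational schemes, and to recover uniqueness and monotonicity from the scaling structure. First I would record the scaling reduction: any positive solution of $-\dl u+\la u=|u|^{p-2}u$ with $\la>0$ decaying at infinity is, by Gidas--Ni--Nirenberg, radially symmetric about some point, and by Kwong's uniqueness theorem the radial positive solution is unique; hence up to translation $u(x)=\la^{1/(p-2)}U(\sqrt\la\,x)$. A direct computation gives $\nt{u}^2=\la^{\f{2}{p-2}-\f N2}\nt{U}^2$, whose $\la$-exponent is positive exactly when $p<2+4/N$ and negative when $p>2+4/N$; in either non-critical case $\la\mapsto\nt{u}^2$ is a strictly monotone bijection of $(0,\iy)$ onto $(0,\iy)$, so the constraint $\nt{u}=a$ selects a unique $\la=\la(a)>0$. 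Thus, once the variationally produced solution is known to have $\la>0$, existence and uniqueness of the positive normalized solution $u_p$ follow.

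For case (1), $p<2+4/N$, I would use the Gagliardo--Nirenberg inequality $\np{u}^p\le C_{N,p}\nm{\nabla u}_2^{p\ga_p}\nt{u}^{p(1-\ga_p)}$: since $p\ga_p<2$ the kinetic term dominates, so $E$ is coercive and bounded below on $S(a)$, and testing with the $L^2$-preserving dilations $u\mapsto s^{N/2}u(s\cdot)$ and letting $s\to0^+$ gives $e(a)<0$. Attainment is obtained by taking a minimizing sequence, passing to Schwarz symmetrizations so that it lies in $H^1_{\mathrm{rad}}(\RN)$, and using the compact embedding $H^1_{\mathrm{rad}}(\RN)\hra L^p(\RN)$; vanishing is excluded by $e(a)<0$ and dichotomy by the strict subadditivity $e(a)<e(a_1)+e(a_2)$. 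The minimizer solves the Euler--Lagrange equation with a multiplier $\la$, and combining the Nehari identity $\nm{\nabla u}_2^2+\la\nt{u}^2=\np{u}^p$ with the associated Pohozaev identity shows $\la>0$; by the reduction above this minimizer is $u_p$.

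For case (2), $p>2+4/N$, $E$ is unbounded below on $S(a)$, so I would work with the fiber map $g_u(t):=E(e^{Nt/2}u(e^t\cdot))=\f{e^{2t}}{2}\nm{\nabla u}_2^2-\f{e^{p\ga_p t}}{p}\np{u}^p$; because $p\ga_p>2$ this has a unique maximum, defining the Pohozaev set $\PR(a)=\lbr{u\in S(a):\nm{\nabla u}_2^2=\ga_p\np{u}^p}$. On $\PR(a)$ one has $E(u)=\sbr{\f12-\f1{p\ga_p}}\nm{\nabla u}_2^2>0$, giving coercivity and $e(a)>0$; minimizing $E$ over $\PR(a)$ in the radial class with the same compactness yields a minimizer, which is a genuine critical point since $\PR(a)$ is a natural constraint, and again Nehari plus Pohozaev give $\la>0$, identifying the minimizer with $u_p$.

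Finally, for the monotonicity I would exploit the explicit scaling: the relations for $\nm{\nabla u_p}_2^2$ and $\np{u_p}^p$ give $E(u_p)=\la(a)^{\q}E(U)$ with $\q=\f{2N-p(N-2)}{2(p-2)}>0$, while the Nehari and Pohozaev identities for $U$ show that $E(U)$ has the sign of $N(p-2)-4$. In case (1), $E(U)<0$ and $\la(a)$ is increasing, so $e(a)=E(u_p)$ is strictly decreasing; in case (2), $E(U)>0$ and $\la(a)$ is decreasing, so $e(a)$ is again strictly decreasing. The main obstacle is the compactness/attainment step, sharpest in case (2): neither $S(a)$ nor $\PR(a)$ is weakly closed, so minimizing sequences may vanish or split, and controlling this—proving the strict subadditivity that rules out dichotomy, using the sign of $e(a)$ with radial compactness to rule out vanishing, verifying that $\PR(a)$ is a natural constraint, and extracting $\la>0$ from the combined Nehari--Pohozaev analysis—is the technical heart of the argument.
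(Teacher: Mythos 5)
You should first be aware that the paper contains no proof of Theorem A at all: it is quoted as a known result, with the text stating that these facts ``are established in'' the cited work of Soave (J.~Differential Equations 2020); for the pure power nonlinearity they go back to Cazenave--Lions/Stuart (mass-subcritical case), Jeanjean 1997 (mass-supercritical case), and Kwong's uniqueness theorem. So there is no internal argument to compare against, and your proposal must be judged on its own merits. In outline it is correct, and it is essentially the route taken in that literature: global minimization on $S(a)$ with radial compactness when $2<p<2+\f{4}{N}$, and minimization on the Pohozaev set $\lbr{u\in S(a):\nm{\nabla u}_2^2=\ga_p\np{u}^p}$ (equivalently the inf-max over dilation fibers) when $2+\f{4}{N}<p<2^*$. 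The surrounding pieces check out computationally: combining Nehari and Pohozaev gives $\la\nt{u}^2=(1-\ga_p)\np{u}^p>0$ for \emph{every} nontrivial normalized solution, which is exactly what your Gidas--Ni--Nirenberg/Kwong reduction needs (all positive normalized solutions, not only the variational one, have $\la>0$); the mass exponent $\f{2}{p-2}-\f{N}{2}$ changes sign precisely at $p=2+\f{4}{N}$; and the identity $e(a)=\la(a)^{\q}E(U)$ with $\q=\f{2N-p(N-2)}{2(p-2)}>0$, together with the fact that $E(U)$ has the sign of $N(p-2)-4$, does yield strict monotonicity of $e(a)$ in both regimes. This scaling derivation of the monotonicity is cleaner than re-running a variational comparison argument, and is a genuine (if classical) simplification available only for the pure power equation.

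The one place where your outline risks circularity is the attainment step in case (2). The standard way to exclude mass loss in the weak limit (i.e.\ $\nt{u}=a'<a$) is to invoke that $a\mapsto e(a)$ is non-increasing, but in your scheme monotonicity is obtained \emph{after} attainment, from the explicit formula. This is repairable in two standard ways: either prove the non-strict monotonicity directly and independently of attainment by a truncation-plus-distant-bump construction (this is exactly what the paper does for the system level $m(a,b)$ in its Lemma 2.2(2)), or follow Jeanjean's scheme, producing a Palais--Smale sequence via Ekeland/min-max, passing to the limit in the Euler--Lagrange equation, extracting $\la>0$ from Nehari--Pohozaev, and then using $\la>0$ to upgrade weak to strong $H^1$ convergence. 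As written, the logical ordering of these ingredients needs to be made explicit, but the ingredients you name are the correct ones and no step of your plan would fail.
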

				\vskip0.1in
				From a variational point of view,  besides the Sobolev critical exponent  $2^*=2N/(N-2)$ for $N\geq 3$ and $2^*=+\infty$ for $N = 1, 2$, a new $L^2$-critical exponent  $\bar{p}=2+4/N$
				arises that plays a pivotal role in the study of normalized solutions to \eqref{mainequ}. This	threshold significantly impacts  the structure  of functional $I(u,v) $ (or $E(u)$, respectively) on the constraint  $\TR(a,b)$ (or $S(a)$, respectively),     and, consequently, influences the  choice of approaches when searching 
				for constrained critical points.
			 We   divide  our results  into three situation: (1) $L^2$-subcritical case $2<p<2+{4}/{N}$; (2) $L^2$-critical and $L^2$-supercritical (but Sobolev subcritical) case   $2+{4}/{N}\le p<2^*$;  (3)  Sobolev critical  case $p=2^*$.
				\subsection{\texorpdfstring{$L^2$}{}-subcritical \texorpdfstring{$2<p<2+\f{4}{N}$}{}}

				\medskip
			Now we  consider the case $\nu >0$ of attractive interaction.
					It is straightforward to verify that the functional $I(u,v)$  becomes  unbounded from below on $\TR (a,b)$ when  $\nu>0$.   To establish the existence of normalized solutions to \eqref{system1} in such cases, we draw inspiration from \cite{Jeanjean=2020,Soave=JFA=2020,Bartsch-Li-Zou=2023}. Specifically, solutions of \eqref{system1} satisfy the Pohozaev identity
					\be\lab{PHO}
					P(u,v):=\int_\RN \sbr{|\nabla u|^2+|\nabla v|^2} \dx
					-\ga_p\int_{\RN} \sbr{|u|^p+|v|^p}\dx -\nu\int_{\RN}|u|^\al|v|^\beta \dx=0.
					\ee
					Setting the Pohozaev manifold
						\be\label{pho}
					\PR(a,b):=\Big\{(u,v)\in \TR(a,b): P(u,v)=0 \Big\}.
					\ee
					Then   $\PR(a,b)\neq\emptyset$. The Pohozaev identity implies that any solution of \eqref{mainequ} belongs to $\PR(a,b)$, so that if $(u,v)$ is a minimizer of the following minimization problem
					\be\label{min}
					m(a,b):=\inf_{(u,v)\in\PR(a,b)} I(u,v).
					\ee
					then $(u,v)$ is a normalized ground state solution of \eqref{mainequ},  that is $m(a,b)=m_g(a,b)$.  By introducing a $L^2$-norm invariant transformation and   a suitable fiber map (detailed in Section \ref{sec1.4}), we decompose the Pohozaev manifold into three submanifolds:
					\[\PR(a,b)=\PR^+(a,b)\cup \PR^0(a,b)\cup \PR^-(a,b).\]
					If $2<p<2+4/N$ and $\nu>0$, the fiber map exhibits both convex and concave geometries under certain conditions, such as when the total mass  $a^2 + b^2$  is small. Such  specific geometric structure plays a  crucial role in searching for the multiplicity of normalized solutions. 
				\medbreak
					We now state that  the multiplicity results of normalized solutions to system \eqref{system1} for the $L^2$-subcritical  case. These solutions can be characterized as the local minimizer (normalized ground state) and the mountain pass solution (normalized excited state) of functional $I$ on the constraint $\PR(a,b)$ respectively. More precisely, we have the following results.
							
					\bt\label{thm1}
					Let $N\ge3$, $\nu>0$ and the exponents satisfy $\al>1, \beta>1, \alpha+\beta=2^*$ and
					\be\label{thm1con1}
					2<p<\begin{cases}2+\f{4}{N},&\quad\text{when}~N=3,4,\\ 2+\f{2}{N-2}, &\quad\text{when}~N\ge5. \end{cases}
					\ee
					There exists a constant $C_0>0$, such that if $\nu$ satisfies
					\be\label{Hsubcritical}
					0<\nu^{\f{2-p\ga_p}{2^*-2}}<C_0\sbr{a^{p(1-\ga_p)}+b^{p(1-\ga_p)}}^{-1},
					\ee
					then we have
					\begin{itemize}[fullwidth,itemindent=1.5em]
						\item[(1)]	$I|_{\TR(a,b)}$ has a critical point of  local minimum type $(u_+,v_+)$ at negative level $$m(a,b)=\inf_{\PR^+(a,b)} I(u,v)<0,$$ which is also a normalized
						ground state of  system  \eqref{mainequ}.
						\item[(2)]   $I|_{\TR(a,b)}$ has a second critical point of mountain pass type $(u_-,v_-)$ at positive level
						$$l(a,b)=\inf_{\PR^-(a,b)} I(u,v)>0,$$
					 which is also a normalized
				excited state of system \eqref{mainequ}
						\item[(3)]	Both components of $(u_+,v_+)$ and $(u_-,v_-)$ are positive functions, are radially symmetric, and solve system \eqref{mainequ} for
						suitable $\la_{1,+},\la_{2,+}>0$ and $\la_{1,-},\la_{2,-}>0$.
					\end{itemize}
					\et
					\br{ \rm  Actually, we can  give the explicit expression of $C_0$,
						$$C_0=\min\lbr{\f{1}{\ga_p},~\f{p(2^*+2-p\ga_p)}{22^*}}
						\f{(2^*-2)(2-p\ga_p)^{\f{2-p\ga_p}{2^*-2}}\SR_{\al,\beta}^{\f{2^*(2-p\ga_p)}{2(2^*-2)}}  }
						{\CR(N,p)(2^*-p\ga_p)^{\f{2^*-p\ga_p}{2^*-2}}},$$
						where $\SR_{\al,\beta}$ is defined by \eqref{Sobolev2}, and $\CR(N,p)$ is defined by \eqref{GN}. }
					\er
					
					To our best knowledge, Theorem \ref{thm1} appears to be the first multiplicity result of normalized solutions to Sobolev critical system \eqref{mainequ}. In this part, we outline the   ideas for establishing  the key multiplicity  of   solutions. 
					
					On the one hand, in order to obtain  the local minimum type solution  $(u_+,v_+)$  of the functional $I(u,v)$, we first compare the ground state $m(a,b)$ with the local minimum energy (see Lemma \ref{lemma 2.2}), namely:
					\begin{equation}
						 m(a,b)=\inf_{A_{R_0}(a,b)} I(u,v).
					\end{equation}
					     We then  establish an upper estimate of the  normalized ground state level $m(a,b)< \min \lbr{e(a),e(b)}<0$, which  allows us to recover the compactness  of Palais-Smale sequence at level $m(a,b)$ and obtain a local minimum type normalized ground state.
					
				On the other hand, since the functional $I(u,v)$ is unbounded from below on $\TR(a,b)$,    the structure of the functional suggests that there may exist  another normalized solution at the mountain pass level. 
				  To obtain the  second solution, we need to construct a mountain pass   level $\sigma_1$ (see \eqref{s1}), and   find a Palais-Smale sequence at such level by  applying the   Ghoussoub min-max principle described in \cite[Section 5]{Ghoussoub=1993}. Then using the Pohozaev identity, we can show that  such Palais-Smale sequence is   bounded in $H$.  However,    the weak limit of Palais-Smale sequence in $H$ may potentially possess vanishing components. To overcome this difficulty, a precise threshold for the mountain pass level is required to ensure that the weak limit for the Palais-Smale sequence has two nontrivial components.  More precisely, the mountain pass level  $\sigma_1$ must   be smaller than $m(a,b)+\f{\nu^{-(N-2)/2}}{N}\SR_{\al,\beta}^{N/2}$, where $\SR_{\al,\beta}$  is defined by \eqref{Sobolev2}.  In this paper, the energy estimate    can be established by observing the following result  (see Lemma \ref{lemma2.3}),   
					\begin{equation}
						\sigma_1= \inf_{\PR^+(a,b)} I(u,v):=l(a,b).
					\end{equation}
					The proof of energy estimate  \eqref{s2} depends on the choice of  suitable test function; see   Lemma \ref{keyest} and \ref{keyest2}. Our approach is inspired by the pioneering work \cite{Jeanjean=2020, Wei-Wu=JFA=2022}, which focuses on the multiplicity of normalized solutions to the following scalar equation with combined nonlinearity\be 
					\begin{aligned}
						& -\Delta u+\lambda u=\mu |u|^{q-2}u+|u|^{2^*-2}u \quad \text{in}~\RN, \quad  \nt{u}^2=a^2.
					\end{aligned} 
					\ee
					  However, due to the presence of coupling terms, the geometry of the Pohozaev manifold for system \eqref{mainequ} is more complicated than that for \eqref{single1}, which means that the methods used in \cite[Propositions 1.15 and 1.16]{Jeanjean=2020} and \cite[Lemma 3.1]{Wei-Wu=JFA=2022} cannot be applied directly here. Thus, we require more careful calculations in this paper.
					
%					
%					Here, we recall some results about scalar equations. In the pioneering work \cite{Jeanjean=2020}, L. Jeanjean and T. Trung Le considered the following Sobolev critical equation
%					\begin{equation}
%						-\Delta u+\lambda u=\mu |u|^{q-2}u+|u|^{2^*-2}u \quad \text{ in }\RN
%					\end{equation}
%					with prescribed mass $\int_{\RN} u^2\dx=c^2$,	where $\mu>0$ and $2<q<2+\frac{4}{N}$ with $N\geq 4$.
%					They successfully achieved the discovery of a second normalized solution of mountain pass type through  an innovative method. This accomplishment hinged on the establishment of two pivotal estimates,  see \cite[Proposition 1.15 and 1.16]{Jeanjean=2020}. However, it is worth noting that this method proves ineffective when applied to the case where $N=3$. Recently, this open problem  has been solved by J. C. Wei and Y. Z. Wu \cite[Theorem 1.1]{Wei-Wu=JFA=2022}.  Their approach notably diverged from the one presented in \cite{Jeanjean=2020} in that they took a test function composing of the normalized ground state  and the Aubin-Talenti bubble.
%					

					\vskip0.2in
					For the $L^2$-subcritical case $p<2+\f{4}{N}$, we find that the Sobolev critical system \eqref{mainequ} under assumption \eqref{assumption1} is more interesting than other cases. This is reflected not only in the multiplicity result for the attractive case 
					$\nu>0$ in Theorem \ref{thm1},  which reveals a complex structure of the corresponding functional  $I(u,v)$,  but also in the following existence result for the repulsive case  $\nu<0$. This contrasts with the non-existence result for $\nu<0$   presented in \cite{Bartsch-Li-Zou=2023} for the system \eqref{mainequ}     with a Sobolev critical nonlinearity $p=2^*$  and  a   subcritical coupling term $2<\al+\beta<2^*$.
					\medbreak
					Our next existence result can be stated as follows.
					\begin{theorem}\label{thm3}
						Let $N=3$ and $\nu<0$. Assuming that the exponents satisfy
						\be\lab{tem-101} 2<p<2+\frac{4}{N}, \text{ and }~	 p \leq 	\alpha,\beta < 2^*, \alpha+\beta=2^*,\ee
						we get that $I|_{\TR_r(a,b)}$ has a critical point  $(\bar u, \bar v)$ at negative level
						\begin{equation}
						 m_r(a,b):=\inf_{(u,v)\in\PR_r(a,b)} I(u,v)<0,
						\end{equation}where	$\TR_r(a,b)=\TR(a,b)\cap H_{rad}$ and $\PR_r(a,b):=\PR(a,b)\cap H_{rad}$  ($H_{rad}$ is  the subspace of radial symmetric functions in $H$).   Moreover, $(\bar u, \bar v)$ is   a       normalized  solution  with least energy among all radially symmetric normalized solution  of \eqref{mainequ} with Lagrange multipliers  $\bar{\lambda}_1,\bar{\lambda}_2>0$.
					\end{theorem}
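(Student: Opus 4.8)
The plan is to realize $(\bar u,\bar v)$ as a constrained minimizer of $I$ over the radial torus $\TR_r(a,b)$, to identify this level with $m_r(a,b)=\inf_{\PR_r(a,b)}I$, and then to upgrade the minimizer to a genuine solution with positive multipliers. First I would record the fiber structure: for $(u,v)\in\TR_r(a,b)$ set $s\s(u,v):=(e^{Ns/2}u(e^s\cdot),e^{Ns/2}v(e^s\cdot))$, so that
\[
\psi(s):=I(s\s(u,v))=\f{e^{2s}}2\int_\RN(|\nabla u|^2+|\nabla v|^2)\dx-\f{e^{p\ga_p s}}p\int_\RN(|u|^p+|v|^p)\dx-\f{\nu e^{2^*s}}{2^*}\int_\RN|u|^\al|v|^\beta\dx .
\]
Since $\nu<0$ the last coefficient is positive and $p\ga_p<2<2^*$, so $\psi'(s)=0$ is equivalent to a strictly increasing function of $s$ equalling a constant; hence $\psi$ has a unique critical point, a strict global minimum at a negative level. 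This shows $\PR_r(a,b)$ is a natural constraint consisting entirely of fiber minima, that minimizers of $I|_{\PR_r}$ are free critical points of $I|_{\TR_r}$, and that $m_r(a,b)=\inf_{\PR_r(a,b)}I=\inf_{\TR_r(a,b)}I<0$.

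Next I would show $I$ is coercive and bounded below on $\TR_r(a,b)$: by Gagliardo--Nirenberg and $p\ga_p<2$ one has $\f1p\int(|u|^p+|v|^p)\le C(a^{p(1-\ga_p)}+b^{p(1-\ga_p)})A^{p\ga_p/2}$ with $A=\|\nabla u\|_2^2+\|\nabla v\|_2^2$, while the coupling term contributes nonnegatively because $\nu<0$; thus $I\ge\f12 A-cA^{p\ga_p/2}$, so every minimizing sequence is bounded in $H_{rad}$. The crucial ingredient is then the strict energy inequality $m_r(a,b)<\min\{e(a),e(b)\}$. To obtain $m_r(a,b)<e(a)$ I would test $I$ on $(u_a,v_\delta)$, where $u_a$ is the radial ground state of \eqref{single1} with mass $a$ (Theorem A) and $v_\delta(x)=\delta^{N/2}w(\delta x)$ has fixed mass $b$: a direct computation gives $I(u_a,v_\delta)=e(a)+E(v_\delta)+\f{|\nu|}{2^*}\int|u_a|^\al|v_\delta|^\beta$ with $E(v_\delta)\sim-\f1p\delta^{p\ga_p}\|w\|_p^p$, whereas the coupling is $O(\delta^{N(\beta-2)/2})\cdot o(1)=o(\delta^{p\ga_p})$ precisely because $\beta\ge p$. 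The symmetric choice, using $\al\ge p$, yields $m_r(a,b)<e(b)$. It is exactly here (and in the multiplier step) that the hypotheses $p\le\al,\beta$ with $\al+\beta=2^*$ enter, and these force $N=3$.

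With these estimates I would run the compactness argument. Taking a minimizing sequence that is also a Palais--Smale sequence for $I|_{\TR_r(a,b)}$ (Ekeland), boundedness and the compact radial embeddings $H^1_{rad}(\RN)\hookrightarrow L^q(\RN)$, $2<q<2^*$, yield a weak limit $(\bar u,\bar v)$ with $u_n\to\bar u$, $v_n\to\bar v$ in every such $L^q$; the multipliers $\la_{1,n},\la_{2,n}$ are bounded, so $(\bar u,\bar v)$ solves the system with some $\bar\la_1,\bar\la_2$. Vanishing of a component is excluded by the strict inequality: if $\bar v=0$ then $\|v_n\|_p\to0$ and, by Hölder together with the $L^q$-convergence, the critical coupling tends to $0$, whence $m_r(a,b)\ge e(a)$, contradicting the display above; symmetrically $\bar u\ne0$. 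To recover the full mass I would prove $m_r$ is strictly decreasing in each of $a,b$ (via the $L^2$-subcritical fiber scaling); then, with $a_1=\nt{\bar u}\le a$ and $b_1=\nt{\bar v}\le b$, Fatou's lemma for the positive coupling term, weak lower semicontinuity, and $L^p$-convergence give $I(\bar u,\bar v)\le m_r(a,b)$, while $(\bar u,\bar v)\in\PR_r(a_1,b_1)$ forces $I(\bar u,\bar v)\ge m_r(a_1,b_1)$; strict monotonicity then yields $(a_1,b_1)=(a,b)$ and strong convergence. Finally, replacing $(\bar u,\bar v)$ by $(|\bar u|,|\bar v|)$ and invoking the maximum principle gives positive components, and $\bar\la_1,\bar\la_2>0$ follows from the decay analysis of the limiting equations: since $\al,\beta\ge p$ the coupling is of higher order than the $|u|^{p-1}$ term at infinity, so a nonpositive multiplier would force a non-$L^2$ decay rate $|x|^{-(N-2)}$ (impossible for $N=3$) or oscillation. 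The least-energy characterization is immediate since every radial normalized solution lies in $\PR_r(a,b)$.

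I expect the main obstacle to be the compactness, and specifically the strict monotonicity of $m_r(a,b)$ in the masses needed to preclude escape of $L^2$-mass to infinity: unlike the subcritical $L^q$ norms, the $L^2$ constraint is not compact even in the radial class, and the scaling estimates proving strict monotonicity must control the non-compact critical coupling term, which is exactly where the sign $\nu<0$ and the conditions $p\le\al,\beta$ are essential.
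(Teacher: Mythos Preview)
Your overall strategy is correct and close in spirit to the paper's (Section~3), but two steps are not quite right as written, and the compactness is organized differently from the paper.

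In the nonvanishing step, the assertion that ``by H\"older together with the $L^q$-convergence, the critical coupling tends to $0$'' is unjustified: the coupling sits at the critical exponent $\alpha+\beta=2^*$, and radial $L^q$-compactness for $q<2^*$ does not force $\int|u_n|^\alpha|v_n|^\beta\to0$. The desired conclusion $m_r(a,b)\ge e(a)$ is, however, immediate once you simply drop this term, which contributes nonnegatively to $I$ since $\nu<0$: then $I(u_n,v_n)\ge E(u_n)-\tfrac1p\|v_n\|_p^p\ge e(a)+o(1)$. (The paper argues instead that $P(u_n,v_n)\to0$ and $\nu<0$ force $\|\nabla u_n\|_2^2+\|\nabla(v_n-\bar v)\|_2^2\to0$ directly.)

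For the mass recovery you invoke \emph{strict} monotonicity of $m_r(a,b)$ ``via the $L^2$-subcritical fiber scaling'', but the $\s$-fiber preserves the $L^2$ mass, and any dilation changing $\|u\|_2$ also moves the Sobolev-critical coupling, which enters $I$ with a \emph{positive} sign when $\nu<0$ and does not obviously cooperate; there is no clean scaling argument here. Strict monotonicity does hold, but by a disjoint-support construction (essentially Lemma~3.2(a) combined with $e(c)<0$ for every $c>0$): for $a_1<a$, add to a compactly supported near-minimizer $(u,v)\in\TR_r(a_1,b)$ a radial $w$ with $\|w\|_2^2=a^2-a_1^2$, support disjoint from $u$ and $v$, and $E(w)<0$, which gives $m_r(a,b)\le m_r(a_1,b)+e(\sqrt{a^2-a_1^2})<m_r(a_1,b)$. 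The paper bypasses this entirely by proving $\bar\la_1,\bar\la_2>0$ \emph{before} the convergence step: once the weak limit is a positive solution, the hypothesis $\alpha,\beta\ge p$ yields $-\Delta\bar u\ge0$ near infinity, and the Hadamard three-spheres theorem forces $|x|\,\bar u(x)\ge c>0$ for large $|x|$ when $N=3$, contradicting $\bar u\in L^2(\R^3)$ unless $\bar\la_1>0$; with positive multipliers, testing the difference of the equations against $(u_n-\bar u,v_n-\bar v)$ gives strong $H^1$ convergence at once. Your decay sketch at the end is exactly this argument, just placed after rather than before the mass recovery. Finally, the paper's energy estimate is simpler than yours: it uses test pairs with disjoint supports to obtain $m_r(a,b)\le e(a)+e(b)<\min\{e(a),e(b)\}$ directly, with no concentration analysis.
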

					
%					As requied in \eqref{tem-101}, this result holds only for the $L^2$-subcritical case $2<p<2+\frac{4}{N}$. This is due to the inherent difficulty in establishing the boundedness of the Palais-Smale sequence when $2+\frac{4}{N}\leq p\leq 2^*$. Even if this difficulty can be overcome in the $L^2$-subcritical case, it is important to note that dealing with the repulsive case $\nu<0$ poses substantial challenges. The main challenge arises from the lack of compactness in the embedding of $H^1(\mathbb{R}^N)$ into $L^2(\mathbb{R}^N)$, even within the radial subspace, which leads to that the weak limit of the Palais-Smale sequence may not belong to the set $\TR(a,b)$.
%					
%					To recover the compactness, one key point is to obtain the positivity of Lagrange multipliers $\lambda_1,\lambda_2$. In general, for the attractive case $\nu > 0$, to demonstrate $\lambda_1,\lambda_2>0$, the approach is to apply a Liouville-type theorem, f.g., \cite[Lemma A.2]{Ikoma} or \cite[Theorem 8.4]{Souplet}.
%					However, for the repulsive case, where $\nu<0$, the Liouville-type theorems can not be applied anymore. As a result, we need to borrow some  ideas from \cite{Bartsch-Soave=JFA=2017,Ikoma}.
%					To address this challenge, we rely on the assumptions that $N=3$ and $p \leq \alpha, \beta <2^*$ to get $\lambda_1,\lambda_2>0$. It remains an open and challenging question whether normalized solutions exist under different cases when $\nu<0$, including $L^2$-critical and supercritical case, or $L^2$-subcritical case with $\alpha, \beta<p$, or for general dimension $N\geq 3$.
%					
%					

					\subsection{\texorpdfstring{$L^2$}{}-critical and \texorpdfstring{$L^2$}{}-supercritical \texorpdfstring{$2+\f{4}{N}\le p<2^*$}{}}
					Now we focus on the case $2+\f{4}{N}\le p<2^*$ and $\nu>0$. Our results are as follows.
			 
					\bt\label{thm2}
					Let $N=3,4$, $\nu>0$, and the exponents satisfy
					$$\al>1,~\beta>1,~\alpha+\beta=2^*,~2+\f{4}{N}\le p<2^*.$$
					We further assume that
					$$a^{4/N}+b^{4/N}<(1+\f{2}{N})\CR(N,p), \quad\text{when}~p=2+\f{4}{N}.$$
					where $\CR(N,p)$ is the best constant in Gagliardo-Nirenberg inequality \eqref{GN}.
					If one of the following conditions holds:
					\begin{itemize}[fullwidth,itemindent=1em]
						\item[(C1)]	there exists $a_0>0$  such that   $2+\f{4}{N}<p<2^*$ and     $a, b\le a_0$;
						\item[(C2)]	 there exists $\nu_1=\nu_1(a,b,\alpha,\beta)>0$ such that	$\nu>\nu_1~$;
						\item[(C3)]	$a\le b$ and $\al<2$;
						\item[(C4)]	$b\le a$ and $\beta<2$,
					\end{itemize}
					then 	$I|_{\TR(a,b)}$ has a critical point of mountain pass type  $(\hat{u}_-,\hat{v}_-)$ at positive level $m(a,b)>0$, which is also a normalized
					ground state of \eqref{mainequ} with $\hat{\lambda}_1,\hat{\lambda}_2>0$.
					\et

					\br
					{\rm Actually, when $2+\f{4}{N}<p<2^*$, for fixed $\nu>0$, the constant  $a_0$ is defined by
						\be \label{defi of a0}
						a_0=a_0(\nu):=\mbr{ \f{1}{\ga_p\CR(n,p)}\sbr{ \f{2p\ga_p\nu^{-(N-2)/2}}{N(p\ga_p-2)}\SR_{\al,\beta}^{N/2}}
							^{\f{2-p\ga_p}{2}} }^{\f{1}{p(1-\ga_p)}}.  
						\ee
						By the properties of $e(a)$ in Theorem A, $a_0$ is actually choosed to satisfy
						\[e(a_0)=\f{\nu^{-(N-2)/2}}{N}\SR_{\al,\beta}^{N/2}.\]
						Therefore, under condition $(C1)$, we obtain $e(a)$, $e(b)\geq e(a_0)$. Furthermore, by Lemma \ref{est3-2}, it can be observed that under conditions $(C1)$-$(C4)$, there holds $m(a,b)<\min\lbr{e(a),e(b)}$.
						
					}	
					\er

					\br
					{\rm    Since $\TR(a,b)$ is not a weak compact submanifold in $H$,  a crucial step in recovering the the compactness of Palais-Smale is to prove that the Lagrange multipliers $\lambda_{1 }>0,\la_2>0$. As shown in Proposition \ref{PS1}, this step  can be achieved through a contradiction argument and Liouville-type theorems (see \cite[Lemma A.2]{Ikoma} and \cite[Theorem 8.4]{Souplet}), which are applicable only for $N=3,4$  when  $2+\f{4}{N}\le p<2^*$.   This is the only  reason we   consider  the cases  $N=3,4$  in Theorem \ref{thm2}. Finally,  we would like to emphasize that the key energy estimates in Lemmas \ref{est3-1} and \ref{est3-2} remain valid for 
					  $N\ge5$.
%						For the $L^2$-critical and supercritical case, we focus on the case $N=3,4$ because the compactness of Palais-Smale can be recovered only  in these dimensions , see Proposition \ref{PS1}, since .	  To recover the compactness, This is typically  which can be used only in $N=3,4$. That is why we consider   $N=3,4$ here. 
					 }
					\er

					\subsection{  Sobolev critical \texorpdfstring{$p=2^*$}{}}
					Our previous results primarily address the  case $2<p<2^*$ and $ \al+\beta=2^*$. We recall that the existence of normalized ground state solution to \eqref{mainequ} with the case  $p=2^*$ and $2<\al+\beta<2^*$ has already been investigated in \cite{Bartsch-Li-Zou=2023}. Moreover,     the case where $2<p<2^*$ and $2<\al+\beta<2^*$ has  been  thoroughly studied  in \cite{Li-Zou-1}.   A natural question is what happens    in the fully Sobolev critical case    $p=\alpha+\beta=2^*$. Does a normalized solution still exist in this particular case? We will provide an answer to this problem in this subsection.
					\medbreak
				  Before proceeding, we introduce some results concerning   the following coupled Sobolev critical system
					\be \label{system1}
					\left\{ 	\begin{aligned}
						&-\dl u =|u|^{2^*-2}u+\f{\al\nu}{2^*} |u|^{\al-2}|v|^\beta u,\quad \text{in }\RN,\\
						&-\dl v =|v|^{2^*-2}v+\f{\beta\nu}{2^*} |u|^\al |v|^{\beta-2}v,\quad \text{in }\RN,\\
						& u,v \in \mathcal{D}^{1,2}(\RN), \quad N\geq 3.
					\end{aligned}	\right.
					\ee
					Such system has been widely studied in these years, we refer to \cite{Clapp=DCDS=2019,Clapp=CVPDE=2018,ClappSzulkin,Guo=JDE=,HeYang2018} and references therein.
					%				
					%				A solution
					%				 is a ground state solution of \eqref{system1}, if it is a solution to   \eqref{system1} satisfying
					%				\begin{equation}
						%					I(u_0,v_0)=\inf{I(u,v): }
						%				\end{equation}
					Define 	the Sobolev space $\mathcal{D}:=\mathcal{D}^{1,2}(\RN) \times \mathcal{D}^{1,2}(\RN)$ and the Nehari manifold
					\begin{equation}
						\mathcal{N}:=\lbr{ (u,v)\in\mathcal{D} \setminus \left\lbrace (0,0)\right\rbrace : \nt{\nabla u}^2+\nt{\nabla v}^2
							= \left\| u\right\|_{2^*}^{2^*}+\left\| v\right\|_{2^*}^{2^*}    +\nu \int_{\RN}|u|^\al|v|^\beta \dx}.
					\end{equation}
					It is easy to see that any nontrivial solution  of \eqref{system1} belongs to the Nehari manifold $\mathcal{N}$.
				A solution $(u_0,v_0)$ is called {\it ground state solution} if it has least energy among all nontrivial solutions, that is,  	\begin{equation} \label{defi of C}
						I(u_0,v_0)=\inf_{(u,v)\in \mathcal{N}} I(u,v)=:\mathcal{C}~.
					\end{equation}
					Moreover, in a standard way, there holds
					\begin{equation} \label{defi of C t}
						\mathcal{C}=\inf_{(u,v)\in\mathcal{D} \setminus \left\lbrace (0,0)\right\rbrace} \frac{1}{N}\frac{ \sbr{\nt{\nabla u}^2+\nt{\nabla v}^2}^{\frac{2^*}{2^*-2}}}{\sbr{ \left\| u\right\|_{2^*}^{2^*}+\left\| v\right\|_{2^*}^{2^*}    +\nu \int_{\RN}|u|^\al|v|^\beta}^{\frac{2
								}{2^*-2}}     }.
					\end{equation}
					The existence and classification results for ground state solution  were shown in \cite{HeYang2018}.
					Following the notations in \cite{HeYang2018}, we define the polynomial $F:\R^2\to \R$ by
					\begin{equation}
						F(x_1,x_2)=|x_1|^{2^*}+ |x_2|^{2^*}+ \nu |x_1|^{\al} |x_2|^\beta,
					\end{equation}		
					and denote by $\mathcal{X}$ the set of solutions to the maximization problem
					\begin{equation}
						F(\tilde{x}_1,\tilde{x}_2)	= F_{max}:= \max_{x_1^2+x_2^2=1} F(x_1,x_2), ~\text{ with  } ~\tilde{x}_1^2+ \tilde{x}_2^2=1.
					\end{equation}
					Then it follows from  \cite{HeYang2018} that  the system \eqref{system1} has a ground state solution of the form
					\begin{equation} \label{f15}
						\sbr{\widetilde{U}_{\e,y},\widetilde{V}_{\e,y}}:=\sbr{ \tilde{x}_1 F_{max}^{-\frac{N-2}{4}}U_{\e,y}, \tilde{x}_2 F_{max}^{-\frac{N-2}{4}}U_{\e,y}},\quad  \text{ where} ~ y\in \R^N,~ (\tilde{x}_1,\tilde{x}_2) \in \mathcal{X},~\varepsilon>0,
					\end{equation}
					and $U_{\e,y}$  is the Aubin-Talenti bubble, see \eqref{soliton} ahead. Therefore it follows from  \eqref{defi of C t} that
					\begin{equation}\label{f22}
						\mathcal{C}=I(\widetilde{U}_{\e,y},\widetilde{V}_{\e,y})=\frac{1}{N}F_{max}^{-\frac{N-2}{2}}\mathcal{S}^{\frac{N}{2}}.			\end{equation}
				 Our  first existence results  for $N\geq 5$ are as follows.
					\begin{theorem} \label{thm6}
						Let $N\geq 5$, $\nu>0$ and the exponents satisfy $\al>1,\beta>1,\al+\beta=p=2^*$. We further assume that  $	\tilde{x}_1 \neq 0,~ \tilde{x}_2 \neq 0  $ and     the mass $a$, $b$ satisfy that $a|\tilde{x}_2|=b|\tilde{x}_1| $,	then
						\begin{equation}
							m(a,b)=\inf_{(u,v)\in\PR^-(a,b)} I(u,v)= \frac{1}{N}F_{max}^{-\frac{N-2}{2}}\mathcal{S}^{\frac{N}{2}}=\mathcal{C}.
						\end{equation}
						Moreover, system \eqref{mainequ}  has a  normalized ground state solution, given by $	\sbr{\widetilde{U}_{\e_0,y},\widetilde{V}_{\e_0,y}}$  defined in \eqref{f15}  ~for $y\in \RN$ and the unique choice of $\varepsilon_0>0$ such that $ \nt{\widetilde{U}_{\e_0,y}}=a $ and $\nt{\widetilde{V}_{\e_0,y}}=b $. Furthermore, the function $ \sbr{\widetilde{U}_{\e_0,y},\widetilde{V}_{\e_0,y}}$ solves \eqref{mainequ} with $\lambda_1=\lambda_2=0$.		
					\end{theorem}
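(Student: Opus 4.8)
The plan is to exploit a degeneracy that is special to the fully critical exponent $p=\al+\beta=2^*$ and then to realize the known free ground state under the mass constraint. First I would record that when $p=2^*$ the Pohozaev weight collapses, $\ga_p=\f{N(2^*-2)}{2\cdot 2^*}=1$, so that, writing $A:=\nt{\nabla u}^2+\nt{\nabla v}^2$ and $B:=\ns{u}^{2^*}+\ns{v}^{2^*}+\nu\int_\RN|u|^\al|v|^\beta\dx$, the Pohozaev functional becomes simply
\[
P(u,v)=A-B .
\]
Hence $\PR(a,b)=\mathcal{N}\cap\TR(a,b)$. Moreover $I$ carries no mass term, so on all of $H$ it equals $I(u,v)=\tfrac12 A-\tfrac{1}{2^*}B$, i.e. exactly the functional whose infimum over $\mathcal{N}$ defines $\mathcal{C}$. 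Along the $L^2$-preserving fiber $t\mapsto(t^{N/2}u(t\,\cdot),t^{N/2}v(t\,\cdot))$ one has $\psi(t)=\f{A}{2}t^2-\f{B}{2^*}t^{2^*}$; since $2<2^*$, $\psi$ has a unique critical point which is a strict maximum, so every point of $\PR(a,b)$ is of type $\PR^-$, that is $\PR^-(a,b)=\PR(a,b)$, and on it $I(u,v)=\f1N A$.

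With this in place the lower bound is immediate: since $\PR^-(a,b)=\mathcal{N}\cap\TR(a,b)\subseteq\mathcal{N}$ and the two functionals agree there,
\[
m(a,b)=\inf_{\PR^-(a,b)}I\ \geq\ \inf_{\mathcal{N}}I=\mathcal{C}=\f{1}{N}F_{max}^{-\f{N-2}{2}}\mathcal{S}^{\f{N}{2}} .
\]

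For the reverse inequality I would test with the explicit free ground state $(\widetilde{U}_{\e,y},\widetilde{V}_{\e,y})$ from \eqref{f15}. Its two components are the fixed scalar multiples $\tilde{x}_1,\tilde{x}_2$ of one and the same Aubin--Talenti bubble, so the mass ratio $\nt{\widetilde{U}_{\e,y}}/\nt{\widetilde{V}_{\e,y}}=|\tilde{x}_1|/|\tilde{x}_2|$ is independent of $\e$; this is precisely where the hypothesis $a|\tilde{x}_2|=b|\tilde{x}_1|$ (with $\tilde{x}_1,\tilde{x}_2\neq 0$) enters. Since $N\ge 5$ the bubble lies in $L^2(\RN)$ with $\nt{U_{\e,y}}^2=\e^2\nt{U}^2$, strictly increasing in $\e$, so there is a unique $\e_0>0$ with $\nt{\widetilde{U}_{\e_0,y}}=a$, and the compatibility condition then forces $\nt{\widetilde{V}_{\e_0,y}}=b$ automatically. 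Thus $(\widetilde{U}_{\e_0,y},\widetilde{V}_{\e_0,y})\in\TR(a,b)$; being a solution of \eqref{system1} it lies on $\mathcal{N}$, hence in $\PR^-(a,b)$, and by \eqref{f22} its energy equals $\mathcal{C}$. This yields $m(a,b)\le\mathcal{C}$, whence $m(a,b)=\mathcal{C}$ with the infimum attained at $(\widetilde{U}_{\e_0,y},\widetilde{V}_{\e_0,y})$. Finally, comparing \eqref{system1} with \eqref{mainequ} (where $\omega_1=\omega_2=1$ and $p=2^*$) shows that this pair solves \eqref{mainequ} with $\la_1=\la_2=0$, so it is a normalized ground state.

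The one genuinely delicate point is attainment rather than the lower bound. The mass-ratio $|\tilde{x}_1|/|\tilde{x}_2|$ of the free ground state is frozen by the rigid structure \eqref{f15}, so a prescribed pair $(a,b)$ can be matched only under $a|\tilde{x}_2|=b|\tilde{x}_1|$, and the dilation argument fixing $\e_0$ needs $\nt{U_{\e,y}}<\infty$, which holds exactly when $N\ge 5$. These two facts are precisely the hypotheses of the theorem, and they are what make the free critical energy $\mathcal{C}$ reachable on the constrained torus $\TR(a,b)$; for $N=3,4$ the bubble fails to be square-integrable, which is the structural reason one expects nonexistence there.
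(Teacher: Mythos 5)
Your proposal is correct and takes essentially the same route as the paper's own proof: both observe that for $p=\al+\beta=2^*$ the Pohozaev manifold satisfies $\PR(a,b)=\PR^-(a,b)=\mathcal{N}\cap\TR(a,b)$, deduce $m(a,b)\ge\mathcal{C}$ by comparison with the free Nehari problem (the paper via the quotient representation \eqref{defi of C t}, you via the direct set inclusion --- equivalent arguments), and prove $m(a,b)\le\mathcal{C}$ together with attainment by scaling the explicit ground state \eqref{f15} to match the prescribed masses, which is exactly where $N\ge5$ (square-integrability of the bubble) and $a|\tilde{x}_2|=b|\tilde{x}_1|$ enter. No gaps; the argument matches the paper's proof in structure and substance.
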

					
					\begin{remark}
						
						{ \rm The assumptions $	\tilde{x}_1 \neq 0,~ \tilde{x}_2 \neq 0   $ guarantee  that  the system   \eqref{system1} possesses a ground state solution with two nontrivial components.  Particularly, for a special case $2\alpha=2\beta=2^*$,   Chen and Zou  \cite{Zou 2015} proved that there exists a constant $\tilde{\nu}>0$ such that the following nonlinear problem 
							\begin{equation}
								\left\{ 	\begin{aligned}
									&k^{2^*-2}+\frac{\nu}{2^*}k^{\frac{2^*}{2}-2}l^{\frac{2^*}{2}}=1,\\
									&\frac{\nu}{2^*}k^{\frac{2^*}{2}}l^{\frac{2^*}{2}-2}+l^{2^*-2}=1.
								\end{aligned}	\right.
							\end{equation}
							has a positive solution $(k_0,l_0)$   with $k_0>0$, $l_0>0$
							 if $\nu >\tilde{\nu}$. In this case,  the  ground state solution of \eqref{system1}  is unique of the explicit expression $\sbr{ k_0U_{\e,y},l_0U_{\e,y}}$.  Therefore, as   an application 
						    of Theorem \ref{thm6}, if we assume that   $al_0=bk_0$  then system \eqref{mainequ} has a normalized  ground state solution $ \sbr{k_0U_{\e,y},l_0U_{\e,y}} \in \TR(a,b)$ for $y\in \RN$ and suitable value of  $\varepsilon>0$.}
					\end{remark}
					 
					\vskip 0.1in
					For the special case $N=3,4$, we have the following non-existence result.
					\begin{theorem} \label{thm4}
						Let $N=3,4$, $\nu>0$ and the exponents satisfy $\al>1,\beta>1,\al+\beta=p=2^*$, then system \eqref{mainequ} has no positive normalized solution.  			
					\end{theorem}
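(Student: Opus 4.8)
The plan is to argue by contradiction: assume \eqref{mainequ} with $p=\al+\beta=2^*$ admits a positive normalized solution $(u,v)$ with multipliers $\la_1,\la_2$, show first that this forces $\la_1=\la_2=0$, and then show that the resulting autonomous critical system \eqref{system1} admits no finite-mass positive solution when $N=3,4$. First I would extract the two scalar balances obeyed by any solution. Testing the two equations of \eqref{mainequ} against $u$ and $v$ respectively and adding (using $\tfrac{\al+\beta}{2^*}=1$) gives the Nehari-type identity
\be
\nt{\nabla u}^2+\nt{\nabla v}^2+\la_1 a^2+\la_2 b^2=\ns{u}^{2^*}+\ns{v}^{2^*}+\nu\int_\RN |u|^\al|v|^\beta\dx .
\ee
On the other hand, every normalized solution lies on the Pohozaev manifold, so $P(u,v)=0$ in \eqref{PHO}; since $p=\al+\beta=2^*$ forces $\ga_{2^*}=N(2^*-2)/(2\cdot 2^*)=1$, this reads
\be
\nt{\nabla u}^2+\nt{\nabla v}^2=\ns{u}^{2^*}+\ns{v}^{2^*}+\nu\int_\RN |u|^\al|v|^\beta\dx .
\ee
Subtracting yields the crucial balance $\la_1 a^2+\la_2 b^2=0$. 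As $a,b>0$, this already rules out $\la_1,\la_2>0$ and leaves only the possibilities that both multipliers vanish or that they have opposite signs.

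Next I would show that a positive, decaying solution cannot carry a negative multiplier, which together with the balance forces $\la_1=\la_2=0$. Suppose, say, $\la_2<0$. Since $u,v>0$ and $\nu>0$, the right-hand side $f:=|v|^{2^*-2}v+\tfrac{\beta\nu}{2^*}|u|^\al|v|^{\beta-2}v$ of the $v$-equation is nonnegative, and by elliptic regularity $v\in C^2(\RN)$ with $v(x)\to0$ as $|x|\to\iy$. Passing to the spherical average $\phi(r)$ of $v$ over $\pa B_r$ and the Emden-type substitution $\psi(r):=r^{(N-1)/2}\phi(r)$, one obtains
\be
-\psi''+\Big(\la_2+\tfrac{(N-1)(N-3)}{4r^2}\Big)\psi\ge0 ,
\ee
so that $\psi''\le -\tfrac{|\la_2|}{2}\,\psi<0$ for all large $r$. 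A Sturm/Wronskian comparison with $\sin\big(\sqrt{|\la_2|/2}\,(r-R)\big)$ then forces $\psi$, hence $\phi$ and $v$, to change sign on every interval of length $\pi\sqrt{2/|\la_2|}$, contradicting $v>0$. The identical argument applies to $u$, so $\la_1,\la_2\ge0$; combined with $\la_1 a^2+\la_2 b^2=0$ and $a,b>0$ this gives $\la_1=\la_2=0$.

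Finally, with $\la_1=\la_2=0$ the pair $(u,v)$ solves the autonomous critical system \eqref{system1}, and here I would exploit the $L^2$-obstruction that separates $N=3,4$ from $N\ge5$. Since $-\dl u=|u|^{2^*-2}u+\tfrac{\al\nu}{2^*}|u|^{\al-2}|v|^\beta u=:h\ge0$ with $u\in\DR^{1,2}(\RN)$ and $h\in L^{2N/(N+2)}(\RN)$, the integral representation $u(x)=c_N\int_\RN |x-y|^{2-N}h(y)\,\mathrm{d}y$ holds; restricting the integral to a fixed ball where $h>0$ gives the sharp lower bound $u(x)\ge c\,|x|^{2-N}$ for $|x|$ large. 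For $N=3,4$ one has $2(N-2)\le N$, whence $\int_{\RN}u^2\dx\ge c^2\int_{|x|>1}|x|^{-2(N-2)}\dx=\iy$, contradicting $\nt{u}^2=a^2<\iy$. This mirrors exactly the scalar mechanism by which the Aubin--Talenti bubble fails to be square-integrable in low dimensions. The main obstacle in this scheme is the second step, namely excluding negative multipliers for positive solutions (equivalently, ruling out the mixed-sign configuration left open by the Pohozaev balance): this is where the restriction $N=3,4$ and the decay/Liouville-type analysis genuinely enter, whereas once $\la_1=\la_2=0$ is secured the $L^2$-obstruction is routine.
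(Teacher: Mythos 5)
Your proposal is correct, and its skeleton matches the paper's: both proofs start from the balance $\la_1 a^2+\la_2 b^2=0$, obtained exactly as you do (Nehari identity minus Pohozaev identity, using $\ga_{2^*}=1$), and both conclude with a Liouville-type argument showing that a positive $L^2$-solution cannot carry nonpositive multipliers when $N=3,4$. The difference is in how that second ingredient is organized. The paper observes $-\Delta u+\la_1 u\ge 0$ and $-\Delta v+\la_2 v\ge 0$ and invokes \cite[Lemma A.2]{Ikoma}, which for $N=3,4$ gives the strict inequalities $\la_1,\la_2>0$ in one stroke, contradicting the balance immediately; the autonomous system \eqref{system1} never appears. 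You instead split into two cases: a Sturm/oscillation argument excluding $\la_i<0$, and then a Riesz-potential decay argument excluding the remaining case $\la_1=\la_2=0$. Both cases are handled correctly, so your proof stands and has the merit of being self-contained rather than citation-based. Two remarks. First, your closing sentence misplaces where the dimension restriction enters: your oscillation step is dimension-free (valid for all $N\ge3$), and it is your third step --- the lower bound $u(x)\ge c|x|^{2-N}$ against $u\in L^2(\RN)$, which needs $2(N-2)\le N$, i.e.\ $N\le 4$ --- that genuinely uses $N=3,4$; this decay-versus-integrability obstruction is precisely the content of the cited Ikoma lemma. Second, your oscillation step is dispensable: if $\la_2\le 0$ then already $-\Delta v\ge -\la_2 v\ge 0$, so $v$ is superharmonic, and the same minimum-principle/Green-representation argument yields $v(x)\ge c|x|^{2-N}$ and hence $v\notin L^2(\RN)$ for $N=3,4$. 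Thus a single decay argument disposes of $\la_i\le 0$ uniformly --- which is how the paper (and the proof of Theorem \ref{thm5}, via the Hadamard three-spheres theorem) handles it --- and would shorten your proof without changing its substance.
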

					
%					The existence and non-existence of normalized   solution for the single equation 	\be
%					\left\{ \begin{aligned}
%						&-\dl u+\la u=|u|^{2^*-2}u \quad \text{in}~\RN,\\
%						& u\in H^1(\RN),~\nt{u}=a,
%					\end{aligned}\right.
%					\ee
%					have already been studied in \cite[Proposition 2.2]{Soave=JFA=2020}.  
					
					\begin{remark}
			{\rm		Theorem \ref{thm6} together with Theorem \ref{thm4} provide an answer to the question whether a normalized solution to   system \eqref{mainequ} still exist  in the Sobolev critical case    $p=\alpha+\beta=2^*$.  The distinction between $N=3,4$ and $N\geq 5$ is significant, as it reflects the fact that the Aubin-Talenti bubble $U_{\epsilon,y}$ belongs to $L^2(\RN)$ if and only if   $N\geq 5$, and this distinction plays a crucial role in the analysis.
					}
					\end{remark}

					\subsection{Notations}\label{sec1.4}
					Throughout this paper, we always use the notations $\np{ u}$ to denote the $L^p$-norm of $u$, and we  simply denote $H:=H^1(\RN)\times H^1(\RN)$. Let $H_{rad}$ be the subspace of radial symmetric functions in $H$.
					We use $A\sim B$ to represent $C_1 B\le A\le C_2 B$ for some positive constants $C_1,C_2>0$. We use ``$\to$'' and ``$\rightharpoonup$'' to denote the strong convergence and weak convergence in corresponding space respectively. 	The capital letter $C$ will appear as a constant which may vary from line to line.		
					
					\medskip
					{\bf Sobolev inequality.} Recall the best Sobolev embedding constant
					\be\label{Sobolev1}
					\SR:=\inf_{u\in\DR^{1,2}(\RN)\setminus\{0\}}\f{\nt{\nabla u}^2}{\ns{u}^2},
					\ee
					where $\DR^{1,2}(\RN)$ is the completion of $\CR_0^\iy(\RN)$ with respect to
					the norm $\nm{u}:=\sbr{\int_\RN|\nabla u|^2\dx}^{\f{1}{2}}$.
					It is well known that $\SR$ is achieved by $u$ if only if
					\be\label{soliton}
					u\in\lbr{U_{\e,y}(x):~U_{\e,y}(x)=\big(\f{\sqrt{N(N-2)}\e}{\e^2+|x-y|^2}\big)^{\f{N-2}{2}},\quad \e>0,~y\in\RN}.
					\ee
					Moreover,
					\be
					\nt{\nabla U_{\e,y}}^2=\ns{U_{\e,y}}^{2^*}=\SR^{\f{N}{2}}.
					\ee To simplify the notations, we denote $U_\e(x):=U_{\e,0}(x)$.
					Define
					\be\label{Sobolev2}
					\SR_{\al,\beta}:=\inf_{u,v\in\DR^{1,2}(\RN)\setminus\{0\}}\f{\int_\RN \sbr{|\nabla u|^2+|\nabla v|^2}\dx}
					{ \sbr{\int_\RN|u|^\al|v|^\beta\dx}^{2/2^*} },
					\ee
					then from \cite{NA2000} we know that
					$$\SR_{\al,\beta}=\sbr{\sbr{\f{\al}{\beta}}^{\beta/2^*}+\sbr{\f{\beta}{\al}}^{\al/2^*}}\SR,$$
					where $\SR$ is defined by \eqref{Sobolev1}.
					
					\medbreak
					
					{\bf Gagliardo-Nirenberg inequality.} Recall the Gagliardo-Nirenberg inequality
					\begin{equation}\lab{GN}
						\np{u}\leq \CR(N,p)\nt{u}^{1-\ga_{p}}\nt{\nabla u}^{\ga_{p}}, \quad \forall u\in H^1(\RN),
					\end{equation}
					where $\ga_{p}$ is defined by \eqref{sim1}, and $\CR(N,p)$ is the sharp constant satisfying
					\begin{equation}\label{f25}
						\CR(N,p)^{-1}=\inf_{u\in H^1(\RN)\setminus \lbr{0}} \cfrac{\nt{u}^{1-\ga_{p}}\nt{\nabla u}^{\ga_{p}} }{\np{u} }=(p\ga_{p})^{\frac{\ga_{p}}{2}}(1-p\ga_{p})^{\frac{1}{p}-\frac{\ga_{p}}{2}}\nt{Z}^{1-\frac{2}{p}}.
					\end{equation}
					Here $Z$ is the unique solution of
					\be
					\left\{ 	\begin{aligned}
						&-\dl Z+Z =|Z|^{p-2}Z\quad \text{in }\RN,\\
						& Z>0 ~\text{ and }~ Z(x) \to 0  \quad \text{ as } |x|\to \infty ,\\
						&  Z(0)=\max_{x\in\RN} Z(x).
					\end{aligned}	\right.
					\ee
					For more details, we refer to \cite{Weinstein=CMP}. Moreover,the function $Z^{\kappa,\rho}(x) := \kappa Z( \rho x)$ satisfies
					\begin{equation}
						-\dl Z^{\kappa,\rho}+\rho^2Z =\kappa^{p-2}\rho^2|Z^{\kappa,\rho}|^{p-2}Z^{\kappa,\rho}\quad \text{in }\RN,
					\end{equation}
					and \eqref{f25} is achieved by $u$ if and only if
					\begin{equation}
						u\in \lbr{  Z^{\kappa,\rho}(\cdot +y): \kappa>0,\rho>0, y\in \RN}.
					\end{equation}

					\medbreak
					{\bf An importmant transformation.} We introduce a $L^2$-norm invariant transformation $t\s u(x):=e^{\f{N}{2}t}u(e^tx)$ and
					$$(u,v)\in \TR(a,b)\mapsto t\s(u,v):=(t\s u,t\s v)\in \TR(a,b).$$
					We define the following fiber map
					\be\label{fiber}
					\begin{aligned}
						\Phi_{(u,v)}(t):=I(t\s(u,v))=&\f{1}{2}e^{2t}\int_\RN \sbr{|\nabla u|^2+|\nabla v|^2}\dx
						-\f{1}{p}e^{p\ga_pt}\int_{\RN} \sbr{|u|^p+|v|^p} \dx\\
						&-\f{\nu}{2^*}e^{2^*t}\int_{\RN}|u|^\al|v|^\beta\dx.
					\end{aligned}
					\ee
					By a direct computation, we observe that $\Phi_{(u,v)}'(t)=P(t\s(u,v))$, where $P(u,v)$ is defined by \eqref{PHO}. Therefore, it holds
					\begin{equation}
						\PR(a,b)=\lbr{(u,v)\in \TR(a,b): \Phi_{(u,v)}'(0)=0}.
					\end{equation}
					In this direction, we decompose $\PR(a,b)$ into three  disjoint  submanifolds $\PR(a,b)=\PR^+(a,b)\cup \PR^0(a,b)\cup \PR^-(a,b)$, which are  given by
					\begin{equation}
						\begin{aligned}
							&\PR^+(a,b):=\lbr{(u,v)\in \PR(a,b): \Phi_{(u,v)}''(0)>0},\\
							&\PR^0(a,b):=\lbr{(u,v)\in \PR(a,b): \Phi_{(u,v)}''(0)=0},\\
							&\PR^-(a,b):=\lbr{(u,v)\in \PR(a,b): \Phi_{(u,v)}''(0)<0}.\\
						\end{aligned}
					\end{equation}

					\subsection{Structure of the paper}
					In the remaining sections of this paper, we provide proofs for our main results. In Section \ref{Sect2}, we address the attractive case, where $\nu > 0$, and present the proofs for Theorem \ref{thm1} in Subsection \ref{Sect2.1} and for Theorem \ref{thm2} in Subsection \ref{Sect2.2}. Moving on to Section \ref{Sect3}, we consider the repulsive case $\nu < 0$, and provide the proof for Theorem \ref{thm3}. In Section \ref{Sect4}, we establish the non-existence result for the defocusing case, as stated in Theorem \ref{thm5}. Lastly, in Section \ref{Sect5}, we explore the Sobolev critical case with $p = \alpha + \beta = 2^*$ and present the proofs for both Theorem \ref{thm6} and Theorem \ref{thm4}

					%%%%%%-------------------------------Section 2===========-------------------------------
					\vskip0.23in
					\section{Existence for the attractive case \texorpdfstring{$\nu>0$}{}} \label{Sect2}
					In this section, we study the existence of normalized solution of \eqref{mainequ} for the  attractive case  $\nu>0$. Throughout this section, we always work under the assumptions \eqref{assumption1}.
					To obtain the compactness of Palais-Smale sequence, we need the following additional assumptions
					\be\label{H3-1}
					2<p<2^*~\text{when}~N=3,4;\quad 2<p<2+\f{2}{N-2}~\text{when}~N\ge5,
					\ee
					\be\label{H3-2}
					a^{4/N}+b^{4/N}<(1+\f{2}{N})\CR(N,p)\quad\text{when}~p=2+\f{4}{N}.
					\ee
					Now we give the following compactness result for $\nu>0$.
					\begin{proposition}\label{PS1}
						Assume that \eqref{H3-1}, \eqref{H3-2} hold and
						\be\label{monotonicity}
						m(a,b)\le m(a_1,b_1)\quad\text{for any}~0<a_1\le a,~0<b_1\le b.
						\ee
						Let $\{(u_n,v_n)\}\subset \TR(a,b)$ be a sequence consisting of radial symmetric functions such that
						\be\label{tem3-1}
						I'(u_n,v_n)+\la_{1,n}u_n+\la_{2,n}v_n\to0\quad \text{for some}~\la_{1,n},\la_{2,n}\in\R,
						\ee
						\be\label{tem3-2}
						I(u_n,v_n)\to c,\quad P(u_n,v_n)\to0,
						\ee
						\be\label{tem3-3}
						u_n^-,v_n^-\to0,~\text{a.e. in}~\RN,
						\ee
						with
						\be\label{levelcon1}
						c\neq0,\quad c\neq e(a),\quad c\neq e(b),
						\ee
						and
						\be\label{levelcon2}
						c<\f{\nu^{-(N-2)/2}}{N}\SR_{\al,\beta}^{N/2}+ \min \lbr{~0,~e(a),~e(b),~m(a,b)~}.
						\ee
						Then there exists $u,v\in H_{rad}^1(\RN)$, $u,v>0$ and $\la_1,\la_2>0$ such that
						up to a subsequence $(u_n,v_n)\to(u,v)$ in $H^1(\RN)\times H^1(\RN)$ and
						$(\la_{1,n},\la_{2,n})\to(\la_1,\la_2)$ in $\R^2$.
					\end{proposition}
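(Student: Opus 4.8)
The plan is to run a concentration–compactness analysis on the $L^2$-torus $\TR(a,b)$, whose two genuine enemies of compactness are the loss of $L^2$-mass (because $H^1_{rad}(\RN)$ is \emph{not} compactly embedded in $L^2(\RN)$) and the formation of a Sobolev-critical bubble generated by the coupling term $\int_\RN|u|^\al|v|^\beta\dx$. First I would prove that $\{(u_n,v_n)\}$ is bounded in $H$. Since the $L^2$-norms are frozen, only the Dirichlet energy must be controlled, and the algebraic identity
\[
I(u_n,v_n)-\f{1}{2^*}P(u_n,v_n)=\f{1}{N}\big(\nt{\nabla u_n}^2+\nt{\nabla v_n}^2\big)-\Big(\f{1}{p}-\f{\ga_p}{2^*}\Big)\big(\np{u_n}^p+\np{v_n}^p\big)
\]
does exactly this: the critical coupling term cancels, the left-hand side converges to $c$ by \eqref{tem3-2}, and the Gagliardo–Nirenberg inequality \eqref{GN} with $\nt{u_n}=a$, $\nt{v_n}=b$ bounds the $L^p$-terms by a subordinate power of the Dirichlet energy (strictly subordinate under \eqref{H3-2} when $p=2+\f4N$), giving boundedness in the range \eqref{H3-1}--\eqref{H3-2}. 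Passing to a subsequence, $u_n\rh u$, $v_n\rh v$ in $H^1(\RN)$; the compact radial embedding $H^1_{rad}(\RN)\hra L^q(\RN)$, $2<q<2^*$, yields strong $L^p$- and a.e.\ convergence, so \eqref{tem3-3} forces $u,v\ge0$. Testing \eqref{tem3-1} against $(u_n,0)$ and $(0,v_n)$ and using the fixed masses expresses $\la_{1,n}$, $\la_{2,n}$ through bounded quantities; after a further subsequence $\la_{i,n}\to\la_i$, and $(u,v)$ is a weak solution of \eqref{mainequ} with multipliers $(\la_1,\la_2)$.

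The main obstacle is to establish that $\la_1,\la_2>0$, which is precisely what prevents $L^2$-mass from leaking away along radial spreading. I would argue by contradiction: if, say, $\la_1\le0$ while $u\not\equiv0$, then because $\nu>0$ and $u,v\ge0$ the first equation yields the differential inequality $-\dl u\ge|u|^{p-2}u\ge0$ for a nontrivial nonnegative $u$, which leads to a contradiction through the Liouville-type theorems \cite[Lemma A.2]{Ikoma} and \cite[Theorem 8.4]{Souplet}; it is exactly the admissible range of these theorems that confines the supercritical regime $2+\f{4}{N}\le p<2^*$ to $N=3,4$. The alternative $u\equiv0$ will be excluded a posteriori by the level restrictions \eqref{levelcon1}. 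Thus both multipliers are strictly positive.

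Next I would isolate the possible critical concentration through a Brezis–Lieb splitting. Writing $w_n^1=u_n-u$, $w_n^2=v_n-v$, the strong $L^p$-convergence gives $\np{w_n^i}\to0$, the Brezis–Lieb lemma splits both the Dirichlet energy and the coupling integral, and subtracting the limiting identity from \eqref{tem3-1} tested against $(u_n,v_n)$ produces
\[
\nt{\nabla w_n^1}^2+\nt{\nabla w_n^2}^2+\la_1\nt{w_n^1}^2+\la_2\nt{w_n^2}^2=\nu\int_\RN|w_n^1|^\al|w_n^2|^\beta\dx+o(1).
\]
Here the positivity of $\la_1,\la_2$ is decisive, since it ties the residual $L^2$-norms to the residual gradient and coupling terms. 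Setting $\ell:=\lim\big(\nt{\nabla w_n^1}^2+\nt{\nabla w_n^2}^2\big)$ and inserting the Sobolev inequality \eqref{Sobolev2} gives the dichotomy $\ell=0$ or $\ell\ge\nu^{-(N-2)/2}\SR_{\al,\beta}^{N/2}$. In the first alternative the right-hand side above tends to $0$, so $\la_i\nt{w_n^i}^2\to0$ and, by $\la_i>0$, $\nt{w_n^i}\to0$; together with $\ell=0$ this is strong convergence in $H$. The second alternative I would exclude by energy: since a radially symmetric critically-concentrating profile carries no $L^2$-mass in the limit, the energy splits as $c=I(u,v)+\f{1}{N}\ell+o(1)$, and as $(u,v)$ is a solution with masses $\le(a,b)$ one has $I(u,v)\ge\min\{0,e(a),e(b),m(a,b)\}$ by \eqref{monotonicity} and the monotonicity of $e(\cdot)$ in Theorem A; hence
\[
c\ge\min\{0,e(a),e(b),m(a,b)\}+\f{1}{N}\nu^{-(N-2)/2}\SR_{\al,\beta}^{N/2},
\]
contradicting \eqref{levelcon2}. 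Therefore $\ell=0$ and $(u_n,v_n)\to(u,v)$ strongly in $H$, so $(u,v)\in\TR(a,b)$ and $I(u,v)=c$.

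Finally, strong convergence makes $(u,v)$ a genuine constrained critical point at level $c$ with $u,v\ge0$. The restrictions $c\ne0,e(a),e(b)$ in \eqref{levelcon1} rule out $(u,v)=(0,0)$ and the semi-trivial profiles $(u,0)$, $(0,v)$ --- which would force $c=0$, $c=E(u)=e(a)$ or $c=e(b)$ --- so both components are nontrivial. A strong maximum principle then upgrades $u,v\ge0$ to $u,v>0$, which, combined with $\la_1,\la_2>0$, gives the asserted conclusion.
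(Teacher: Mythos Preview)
Your strategy is broadly correct, but there is a genuine circularity in the order of your steps. You invoke the Liouville theorems to obtain $\la_1,\la_2>0$ under the provisional hypothesis that $u,v\not\equiv0$, and you then use this positivity in your splitting identity to run the dichotomy and deduce strong convergence; only \emph{after} strong convergence do you exclude the semi-trivial limits via \eqref{levelcon1}. But if, say, $u\equiv0$, the first limiting equation carries no information on $\la_1$, so in your identity
\[
\nt{\nabla w_n^1}^2+\nt{\nabla w_n^2}^2+\la_1\nt{w_n^1}^2+\la_2\nt{w_n^2}^2=\nu\int_\RN|w_n^1|^\al|w_n^2|^\beta\dx+o(1)
\]
the term $\la_1\nt{w_n^1}^2=\la_1 a^2+o(1)$ has unknown sign and the dichotomy $\ell=0$ or $\ell\ge\nu^{-(N-2)/2}\SR_{\al,\beta}^{N/2}$ does not follow. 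The paper avoids this by basing the dichotomy on the Pohozaev identity rather than on \eqref{tem3-1} tested against $(u_n,v_n)$: since the weak limit satisfies $P(u,v)=0$, Brezis--Lieb splitting of $P(u_n,v_n)\to0$ gives
\[
\nt{\nabla w_n^1}^2+\nt{\nabla w_n^2}^2=\nu\int_\RN|w_n^1|^\al|w_n^2|^\beta\dx+o(1),
\]
which contains no Lagrange multipliers. With this in hand the paper first treats the degenerate cases $u=0$ and/or $v=0$ one by one (forcing $c=0$, $c=e(a)$, $c=e(b)$, or $c$ above the threshold in \eqref{levelcon2}), and only once both components are known to be nontrivial does it apply the Liouville argument to secure $\la_1,\la_2>0$ and close the strong $L^2$-convergence.

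A secondary gap: your boundedness argument via $I-\f{1}{2^*}P$ leaves the gradient controlled by its own power $p\ga_p$, which is subordinate only for $p\le2+\f4N$; when $2+\f4N<p<2^*$ (allowed by \eqref{H3-1} for $N=3,4$) one has $p\ga_p>2$ and the inequality goes the wrong way. The paper handles this range separately, using $I-\f12 P$ to bound $\np{u_n}^p+\np{v_n}^p$ and $\int_\RN|u_n|^\al|v_n|^\beta\dx$ first, and then reading off the gradient bound from $P(u_n,v_n)=o(1)$.
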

					\bp
					The proof is divided into three steps.
					\begin{itemize}[fullwidth,itemindent=1em]
						\vskip 0.1in
						\item[Step 1.)]	We show that $\{(u_n,v_n)\}$ is bounded in $H^1(\RN)\times H^1(\RN)$.
						
						If $2<p<2+\f{4}{N}$, there is $p\ga_p<2$. Then using $P(u_n,v_n)\to0$, one can see from the  Gagliardo-Nirenberg inequality  that
						for $n$ large enough,
						\begin{align*}
							c+1&\ge I(u_n,v_n)-\f{1}{2^*}P(u_n,v_n)\\
							&=\f{1}{N}\sbr{ \nt{\nabla u_n}^2+\nt{\nabla v_n}^2} -\f{2^*-p}{2^*p}\sbr{\np{u_n}^p+\np{v_n}^p}\\
							&\ge \f{1}{N}\sbr{ \nt{\nabla u_n}^2+\nt{\nabla v_n}^2}-C\sbr{ \nt{\nabla u_n}^2+\nt{\nabla v_n}^2}^{p\ga_p/2},
						\end{align*}
						for some $C>0$, which implies that $\{(u_n,v_n)\}$ is bounded.
						Now if $2+\f{4}{N}\le p<2^*$, then $p\ga_p\ge2$ and for $n$ large enough
						\begin{align*}
							c+1&\ge I(u_n,v_n)-\f{1}{2}P(u_n,v_n)\\
							&=\f{p\ga_p-2}{2p}\sbr{\np{u_n}^p+\np{v_n}^p}+\f{\nu}{N}\int_\RN |u_n|^\al|v_n|^\beta\dx.
						\end{align*}
						Combining \eqref{H3-2} with the fact that $P(u_n,v_n)=o(1)$, we conclude that $\{(u_n,v_n)\}$ is bounded.
						Moreover, from
						$$\la_{1,n}=-\f{1}{a}I'(u_n,v_n)[(u_n,0)]+o(1)\quad\text{and}\quad
						\la_{2,n}=-\f{1}{b}I'(u_n,v_n)[(0,v_n)]+o(1), $$
						we know that $\la_{1,n},\la_{2,n}$ are also bounded.
						So there exists $u,v\in H^1(\RN)$, $\la_1,\la_2\in\R$ such that up to a subsequence
						$$(u_n,v_n)\rh(u,v)\quad\text{in} ~H^1(\RN)\times H^1(\RN),$$
						$$(u_n,v_n)\ra(u,v)\quad\text{in} ~L^q(\RN)\times L^q(\RN),~\text{for}~2<q<2^*,$$
						$$(u_n,v_n)\ra(u,v)\quad\text{a.e. in}~\RN,$$
						$$(\la_{1,n},\la_{2,n})\to(\la_1,\la_2)\quad\text{in} ~\R^2.$$
						Then \eqref{tem3-1} and \eqref{tem3-3} give that
						\be\label{tem3-4}
						\left\{ 	\begin{aligned}
							&I'(u,v)+\la_1 u+\la_2v=0,\\
							&u\ge0,v\ge0,
						\end{aligned}\right.
						\ee
						and hence $P(u,v)=0$.

						\vskip 0.1in
						\item[Step 2.)]	We show that the weak limit $u\neq0$ and $v\neq0$.
						
						Without loss of generality,  we assume that $u=0$ by contradiction. There are two cases.
						If $v=0$, then from $P(u_n,v_n)=o(1)$ we obtain that
						$$\nt{\nabla u_n}^2+\nt{\nabla v_n}^2=\nu\int_\RN |u_n|^\al|v_n|^\beta\dx+o(1)
						\le \nu\SR_{\al,\beta}^{-2^*/2}\sbr{ \nt{\nabla u_n}^2+\nt{\nabla v_n}^2 }^{2^*/2}+o(1).$$
						Assuming $\nt{\nabla u_n}^2+\nt{\nabla v_n}^2\to l\ge0$, we immediately conclude $l\le \nu\SR_{\al,\beta}^{-2^*/2} l^{-2^*/2}$,
						which gives that $l=0$ or $l\ge \nu^{-(N-2)/2}\SR_{\al,\beta}^{N/2}$.
						As a consequence $c=0$ or
						$$c=\lim_{n\to\iy} I(u_n,v_n)=\f{l}{N}\ge \f{\nu^{-(N-2)/2}}{N}\SR_{\al,\beta}^{N/2},$$
						and both of them are contradictions.
						Now if $v\neq0$, by maximum principle we know that
						\be
						\left\{ 	\begin{aligned}
							&-\dl v+\la_2 v=v^{p-1},\quad\text{in}~\RN, \\
							&v>0.
						\end{aligned}\right.
						\ee
						Using \eqref{H3-1}, \cite[Lemma A.2]{Ikoma} and \cite[Theorem 8.4]{Souplet}, we obtain $\la_2>0$.
						Let $\bar v_n=v_n-v$. Then
						$$\nt{\nabla v_n}^2=\nt{\nabla \bar v_n}^2+\nt{\nabla v}^2+o(1),$$
						$$\np{v_n}^p=\np{v}^p+o(1),$$
						$$\int_\RN |u_n|^\al|v_n|^\beta\dx=\int_\RN|u_n|^\al|\bar v_n|^\beta\dx+o(1).$$
						It follows that
						$$\begin{aligned}
							o(1)&=P(u_n,v_n)=\nt{\nabla u_n}^2+\nt{\nabla \bar v_n}^2-\nu\int_\RN|u_n|^\al|\bar v_n|^\beta\dx+P(u,v)+o(1)\\
							&=\nt{\nabla u_n}^2+\nt{\nabla \bar v_n}^2-\nu\int_\RN|u_n|^\al|\bar v_n|^\beta\dx+o(1).
						\end{aligned}$$
						Similar as before, there holds $\nt{\nabla u_n}^2+\nt{\nabla \bar v_n}^2\to0$ or
						$\liminf_{n\to\iy}\nt{\nabla u_n}^2+\nt{\nabla \bar v_n}^2\ge \nu^{-(N-2)/2}\SR_{\al,\beta}^{N/2}$.
						If $\nt{\nabla u_n}^2+\nt{\nabla \bar v_n}^2\to0$, i.e., $u_n,\bar v_n\to0$ in $\DR^{1,2}(\RN)$, then
						$$\begin{aligned}
							&\quad~~\nt{\nabla\bar v_n}^2+\la_2\nt{\bar v_n}^2\\
							&=\sbr{I'(u_n,v_n)+\la_{1,n}u_n+\la_{2,n}v_n}[(0,\bar v_n)]-\sbr{I'(u,v)+\la_1u+\la_{2}v}[(0,\bar v_n)]+o(1)\\
							&=o(1).\end{aligned}$$
						That is $v_n\to v$ in $H^1(\RN)$. As a result,
						$$c=\lim_{n\to\iy}I(u_n,v_n)=\lim_{n\to\iy}\f{1}{N}\sbr{\nt{\nabla u_n}^2+\nt{\nabla \bar v_n}^2}+E(v)=e(b),$$
						which is  a contradiction. On the other hand, if
						$\liminf_{n\to\iy}\nt{\nabla u_n}^2+\nt{\nabla \bar v_n}^2\ge \nu^{-(N-2)/2}\SR_{\al,\beta}^{N/2}$,
						we have again
						$$c\ge\lim_{n\to\iy}\f{1}{N}\sbr{\nt{\nabla u_n}^2+\nt{\nabla \bar v_n}^2}+e(\nt{v})
						\ge \f{\nu^{-(N-2)/2}}{N}\SR_{\al,\beta}^{N/2}+e(b),$$
						a contradiction.

						\vskip 0.1in
						\item[Step 3.)]	We show the strong convergence.

						Let $(\bar u_n,\bar v_n)=(u_n-u,v_n-v)$. Then
						$$o(1)=P(u_n,v_n)=\nt{\nabla \bar u_n}^2+\nt{\nabla \bar v_n}^2-\nu\int_\RN|\bar u_n|^\al|\bar v_n|^\beta\dx+o(1).$$
						Similar to before,  there are two cases
						$$\text{whether}\quad \nt{\nabla \bar u_n}^2+\nt{\nabla \bar v_n}^2\to0 \quad\text{or}\quad \liminf_{n\to\iy}\nt{\nabla\bar u_n}^2+\nt{\nabla \bar v_n}^2\ge \nu^{-(N-2)/2}\SR_{\al,\beta}^{N/2}.$$
						If the second case occur, then
						$$c\ge\lim_{n\to\iy}\f{1}{N}\sbr{\nt{\nabla \bar u_n}^2+\nt{\nabla \bar v_n}^2}+m(\nt{u},\nt{v})
						\ge \f{\nu^{-(N-2)/2}}{N}\SR_{\al,\beta}^{N/2}+m(a,b),$$
						which is a contradiction. So $\nt{\nabla \bar u_n}^2+\nt{\nabla \bar v_n}^2\to0$, i.e., $u_n,\bar v_n\to0$ in $\DR^{1,2}(\RN)$.
						Moreover by maximum principle, $(u,v)$ is a positive solution of \eqref{tem3-4},
						and from \cite[Lemma A.2]{Ikoma} and \cite[Theorem 8.4]{Souplet}, we obtain immediately $\la_1,\la_2>0$.
						Noting that
						$$\begin{aligned}
							&\quad~~\nt{\nabla\bar u_n}^2+\la_1\nt{\bar u_n}^2+\nt{\nabla\bar v_n}^2+\la_2\nt{\bar v_n}^2\\
							&=\sbr{I'(u_n,v_n)+\la_{1,n}u_n+\la_{2,n}v_n}[(\bar u_n,\bar v_n)]-\sbr{I'(u,v)+\la_1u+\la_{2}v}[(\bar u_n,\bar v_n)]+o(1)\\
							&=o(1),\end{aligned}$$
						we obtain $(u_n,v_n)\to(u,v)$ in $H^1(\RN)\times H^1(\RN)$. We complete the proof.
					\end{itemize}
					\ep

					\subsection{The   case \texorpdfstring{$2<p<2+\f{4}{N}$}{}} \label{Sect2.1}

					Recall the fiber map defined by \eqref{fiber}
					$$\Phi_{(u,v)}(t):=\f{1}{2}e^{2t}\int_\RN \sbr{|\nabla u|^2+|\nabla v|^2}\dx
					-\f{1}{p}e^{p\ga_pt}\int_{\RN} \sbr{|u|^p+|v|^p}\dx-\f{\nu}{2^*}e^{2^*t}\int_{\RN}|u|^\al|v|^\beta\dx.$$
					We define
					\be
					h(\rho):=\f{1}{2}\rho^2-A\rho^{p\ga_p}-B\rho^{2^*}
					\ee
					with
					$$A:=\f{1}{p}\CR(N,p)\sbr{a^{p(1-\ga_p)}+b^{p(1-\ga_p)}},\quad B:=\f{1}{2^*}\nu\SR_{\al,\beta}^{-2^*/2}.$$
					Then for any $u,v\in H^1(\RN)$,
					$$I(u,v)\ge h(\sbr{\nt{\nabla u}^2+\nt{\nabla v}^2}^{1/2}).$$
					Futher, there is
					$$h'(p)=\rho^{p\ga_p-1}\sbr{g(\rho)-p\ga_p A}$$
					with $g(\rho):=\rho^{2-p\ga_p}-2^*B\rho^{2^*-p\ga_p}$.
					Let
					\be\label{rho}
					\rho_*:=\sbr{\f{2-p\ga_p}{2^*(2^*-p\ga_p)B}}^{1/(2^*-2)}.
					\ee
					It is easy to check that
					$g(\rho)$ is strictly increasing in $(0,\rho_*)$ and is strictly decreasing in $(\rho_*,+\iy)$.
					By direct computations, assumption \eqref{Hsubcritical} gives that
					$g(\rho_*)>p\ga_pA$ and $h(\rho_*)>0$, which means that $h(\rho)$ has only two critical points
					$0<\rho_1<\rho_*<\rho_2$ with
					$$h(\rho_1)=\min_{0<\rho<\rho_*} h(\rho)<0,$$
					$$h(\rho_2)=\max_{\rho>0} h(\rho)>0.$$
					Moreover, there exist $R_0$ and $R_1$ such that $h(R_0)=h(R_1)=0$ and $h(\rho)>0$ iff $\rho\in(R_0,R_1)$.
					
					Using \eqref{Hsubcritical} again, we can also prove in  a standard way that $\PR^0(a,b)=\emptyset$, and
					$\PR(a,b)$ is a smooth manifold of codimension $2$ in $\TR(a,b)$, see \cite[Lemma 5.2]{Soave=JDE=2020}
					(or \cite{Bartsch-Jeanjean-Soave=JMPA=2016}) for more details. This fact can in turn be used in the following lemma.
					\bl\label{structure1}
					For every $(u,v)\in \TR(a,b)$, $\Phi_{(u,v)}(t)$ has exactly two critical points $t_+(u,v)<t_-(u,v)$ and
					two zeros $c(u,v)<d(u,v)$ with
					$t_+(u,v)<c(u,v)<t_-(u,v)<d(u,v)$. Moreover,
					\begin{itemize}[fullwidth,itemindent=0em]
						\item[(a)]	$t\s(u,v)\in\PR^+(a,b)$ if and only if $t=t_+(u,v)$; $t\s(u,v)\in\PR^-(a,b)$ if and only if $t=t_-(u,v)$,
						\item[(b)] 	$\sbr{\nt{\nabla t\s u}^2+\nt{\nabla t\s v}^2}^{1/2}\le R_0$ for every $t\le c(u,v)$ and
						\be
						I(t_+(u,v)\s(u,v))=\min\lbr{ I(t\s(u,v)): t\in\R~~\text{and}~~
							\sbr{\nt{\nabla t\s u}^2+\nt{\nabla t\s v}^2}^{1/2}\le R_0}<0,
						\ee
						\item[(c)] 	$\Phi_{(u,v)}(t)$ is strictly dereasing and concave on $(t_-(u,v),+\iy)$ and
						$$\Phi_{(u,v
							)}(t_-(u,v))=\max_{t\in\R}\Phi_{(u,v)}(t),$$
						\item[(d)] 	the maps $(u,v)\mapsto t_+(u,v)$ and $(u,v)\mapsto t_-(u,v)$ are of class $C^1$.
					\end{itemize}
					\el
					\begin{proof}
						The proof is completely analogue to the one in \cite[Lemma 5.3]{Soave=JDE=2020}, so we omit the details.
					\end{proof}
					
					For any $R>0$, let
					$$A_R(a,b):=\lbr{(u,v)\in \TR(a,b):\sbr{\nt{\nabla u}^2+\nt{\nabla v}^2}^{1/2}<R }.$$
					\bl \label{lemma 2.2}
					The following statements hold.
					{	\begin{itemize}[fullwidth,itemindent=0em]
							\item[(1)] 	$m(a,b)=\inf_{A_{R_0}(a,b)}I(u,v)<0$,
							\item[(2)]	$m(a,b)\le m(a_1,b_1)$ for any $0<a_1\le a$, $0<b_1\le b$.
					\end{itemize}	}
					\el
					\bp
					\begin{itemize}[fullwidth,itemindent=0em]
						\item[(1)]
						From Lemma \ref{structure1}, we have $\PR^+(a,b)\subset A_{R_0}$ and
						$$m(a,b)=\inf_{\PR(a,b)}I(u,v)=\inf_{\PR^+(a,b)} I(u,v)<0.$$
						Obviously $m(a,b)\ge\inf_{A_{R_0}(a,b)}I(u,v)$. On the other hand, for any $(u,v)\in A_{R_0}(a,b)$, there is
						$$m(a,b)\leq I(t_+(u,v)\s(u,v))\le I(u,v).$$
						It follows that $m(a,b)=\inf_{A_{R_0}(a,b)}I(u,v)$.
						\item[(2)] 	Recall the definition of $\rho_*$ in \eqref{rho}, we see that
						\be\label{tem3-6-7}
						m(a,b)=\inf_{A_{\rho_*}(a,b)} I(u,v).
						\ee
						We need only to show that for any arbitrary $\e>0$, one has
						$$m(a,b)\le m(a_1,b_1)+\e.$$
						Let $(u,v)\in A_{\rho_*}(a_1,b_1)$ be such that $I(u,v)\le m(a_1,b_1)+\f{\e}{2}$ and
						$\phi\in\CR_0^\iy(\RN)$ be a cut-off function
						$$0\le\phi\le1\quad\text{and}\quad \phi(x)=\begin{cases}0,\quad\text{if}~|x|\ge2,\\1,\quad\text{if}~|x|\le1. \end{cases}$$
						For any $\delta>0$, we define $u_\delta(x)=u(x)\phi(\delta x)$ and $v_\delta(x)=v(x)\phi(\delta x)$.
						Then $(u_\delta,v_\delta)\to(u,v)$ in $H$ as $\delta\to0$.
						As a consequence, we can fix a $\delta>0$ small enough such that
						\be\label{tem31}
						I(u_\delta,v_\delta)\le I(u,v)+\f{\e}{4}\quad\text{and}\quad
						\sbr{\nt{\nabla u_\delta}^2+\nt{\nabla v_\delta}^2}^{1/2}<\rho_*-\eta,
						\ee
						for a small $\eta>0$. Now we take $\vp\in\CR_0^\iy(\RN)$ such that
						$supp(\vp)\subset B(0,1+\f{4}{\delta})\setminus B(0,\f{4}{\delta})$,
						where $B(0,r)$ is the ball with a radius of $r$ and a center at the origin.
						Set
						$$w_a=\f{\sqrt{a^2-\nt{u_\delta}^2}}{\nt{\vp}}\vp \quad\text{and}\quad w_b=\f{\sqrt{b^2-\nt{u_\delta}^2}}{\nt{\vp}}\vp.$$
						Then for any $\la<0$, noting that
						$$\sbr{supp(u_\delta)\cup supp(v_\delta)}\cap \sbr{supp(\la\s w_a)\cup supp(\la\s w_b)}=\emptyset,$$
						one has $(u_\delta+\la\s w_a,v_\delta+\la\s w_b)\in\TR(a,b)$.
						Since $I(\la\s (w_a,w_b))\to0$ and $\sbr{\nt{\nabla \la\s w_a}^2+\nt{\nabla \la\s w_b}^2}^{1/2}\to0$ as $\la\to-\iy$, we have
						\be\label{tem32}
						I(\la\s (w_a,w_b))\le\f{\e}{4}\quad\text{and}\quad
						\sbr{\nt{\nabla \la\s w_a}^2+\nt{\nabla \la\s w_b}^2}^{1/2}\le \f{\eta}{2}
						\ee
						for $\la<0$ sufficiently close to negative infinity.
						It follows that
						$$\sbr{\nt{\nabla (u_\delta+\la\s w_a)}^2+\nt{\nabla (v_\delta+\la\s w_b)}^2}^{1/2}< \rho_*.$$
						Now using \eqref{tem31}, \eqref{tem32}, we obtain
						$$m(a,b)\le I(u_\delta+\la\s w_a,v_\delta+\la\s w_b)=I(u_\delta,v_\delta)+I(\la\s w_a,\la\s w_b)\le m(a_1,b_1)+\e,$$
						which finish the proof.
					\end{itemize}
					\ep

					\vskip0.12in
					\bp[\bf Proof of the Theorem \ref{thm1} (1) and (3) ]
					First we construct a Palais-Smale sequence for $I_{\TR(a,b)}$ at level $m(a,b)$.	
					By the symmetric decreasing rearrangement, one can easily check that $m(a,b)=\inf_{A_{R_0}(a,b)\cap H_{rad}} I(u,v)$.
					Choosing a minimizing sequence $(\tilde u_n,\tilde v_n)\in A_{R_0}(a,b)\cap H_{rad}$,
					we assume $(\tilde u_n,\tilde v_n)$ are nonnegative by replacing  $(\ti u_n,\ti v_n)$ with $(|\ti u_n|,|\ti v_n|)$.
					Furthermore, using the fact that $I(t_+(\ti u_n,\ti v_n)\s(\ti u_n,\ti v_n))\leq I(\ti u_n,\ti v_n)$,
					we can assume that $(\ti u_n,\ti v_n)\in\PR^+(a,b)\cap H_{rad}$ for $n\ge1$.
					Therefore, by Ekeland's varational principle, there is a radial symmetric
					Palais-Smale sequence $\lbr{(u_n,v_n)}$ for $I|_{\TR(a,b)\cap H_{rad}}$ (hence a Palais-Smale sequence for
					$I|_{\TR(a,b)}$) with the property $||(u_n,v_n)-(\tilde u_n,\tilde v_n)||_H\to0$ as $n\to\infty$, which implies that
					$$P(u_n,v_n)=P(\ti u_n,\ti v_n)+o(1)\to0\quad \text{and}\quad u_n^-,v_n^-\ra0\ \text{a.e. in}\ \RN.$$
					
					Now we want to apply Proposition \ref{PS1} with $c=m(a,b)$, it remains to check conditions \eqref{levelcon1}, \eqref{levelcon2}.
					For that purpose,  since $e(a),e(b)<0$, we only need to prove that $m(a,b)<\min\lbr{e(a),~e(b)}$. 	Without loss of generality, we only prove $m(a,b)<e(a)$. Let $u_0$ be the unique solution in Theorem A.
					From $h(R_0)=0$, we have $\f{1}{2}R_0^2>\f{1}{p}\CR(N,p)a^{p(1-\ga_p)}R_0^{p\ga_p}$, that is
					$R_0^{2-p\ga_p}>\f{2}{p}\CR(N,p)a^{p(1-\ga_p)}$. It follows that
					$$\nt{\nabla u_0}^2=\ga_p\np{u_0}^p\le\ga_p\CR(N,p)a^{p(1-\ga_p)}\nt{\nabla u_0}^{p\ga_p}
					<R_0^{2-p\ga_p}\nt{\nabla u_0}^{p\ga_p},$$
					which gives $\nt{\nabla u_0}<R_0$. Let $v_0=\f{b}{a}u_0$. Then $(u_0,-s\s v_0)\in A_{R_0}(a,b)$ for $s>0$ large enough.
					Therefore for large $s>0$,
					$$m(a,b)\le I(u_0,s\s v_0)\le E(u_0)+\f{e^{-2s}}{2}\nt{\nabla v_0}^2-\f{e^{-p\ga_ps}}{p}\np{v_0}^p<e(a).$$
					
					According to Proposition \ref{PS1}, there exists $u,v\in H_{rad}^1(\RN)$, $u,v>0$ and $\la_1,\la_2>0$ such that
					up to a subsequence $(u_n,v_n)\to(u,v)$ in $H^1(\RN)\times H^1(\RN)$ and
					$(\la_{1,n},\la_{2,n})\to(\la_1,\la_2)$ in $\R^2$. And hence $(u,v)$ is a normalized ground state solution of \eqref{mainequ}.
					\ep

					\vskip 0.3in

					Now we search for the second normalized solution of \eqref{mainequ}  at the mountain pass level of the Energy  functional. Define the energy level
					\be\label{level2}
					l(a,b):=\inf_{\PR^-(a,b)} I(u,v).
					\ee

					\medbreak	
					According to Proposition \ref{PS1}, the first step  of finding  a second critical point for $I|_{\TR(a,b)}$  is to obtain    a Palais-Smale sequence $\lbr{(u_n,v_n)}\subset \TR(a,b)\cap H_{rad}$ for $I|_{\TR(a,b)}$ at
					level $l(a,b)$, with $P(u_n,v_n)\to0$ and $u_n^-,v_n^-\to0$ a.e. in $\RN$ as $n\to\iy$ .

					\medbreak

					For that purpose, we will follow the standard way as in  \cite{Jeanjean=1997}.  Set $\TR_r(a,b)=\TR(a,b)\cap H_{rad}$. Define
					$$\Ga_1:=\lbr{\ga\in C([0,1],\R\times \TR_r(a,b)):~\ga(0)\in(0,\PR^+(a,b)),~\ga(1)\in(0,I^{2m(a,b)})},$$
					where we denote the sublevel set by $I^c:=\lbr{(u,v)\in H:~I(u,v)\le c}$.
					
					\begin{lemma}\label{lemma2.3}
						Let
						\begin{equation}\label{s1}
							 \sigma_1:=\inf_{\ga\in\Ga_1}\max_{t\in[0,1]} J(\ga(t))
						\end{equation}  with $J(s,(u,v)):=I(s\s (u,v))$.
						Then $l(a,b)=\sigma_1$.
					\end{lemma}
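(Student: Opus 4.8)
The plan is to prove the two inequalities $\sigma_1\le l(a,b)$ and $\sigma_1\ge l(a,b)$ separately, using throughout the fiber structure from Lemma \ref{structure1} and the identity $J(s,(u,v))=I(s\s(u,v))=\Phi_{(u,v)}(s)$. I write a generic competitor as $\ga(\tau)=(s(\tau),w(\tau))$ with $s(\tau)\in\R$ and $w(\tau)\in\TR_r(a,b)$, so that $J(\ga(\tau))=\Phi_{w(\tau)}(s(\tau))$. The facts I would invoke are: from Lemma \ref{lemma 2.2}, $m(a,b)=\inf_{\PR^+(a,b)}I<0$; and from Lemma \ref{structure1}, each fiber $t\mapsto\Phi_{(u,v)}(t)$ has a strict local minimum at $t_+(u,v)$ with value $\ge m(a,b)$, a strict global maximum at $t_-(u,v)>t_+(u,v)$, satisfies $\Phi_{(u,v)}(t)\to0^-$ as $t\to-\iy$ and $\to-\iy$ as $t\to+\iy$, so that $\Phi_{(u,v)}(t)\ge\Phi_{(u,v)}(t_+(u,v))$ for all $t\le t_-(u,v)$; moreover $t_\pm$ depend continuously on $(u,v)$ by Lemma \ref{structure1}(d).

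For the upper bound, I fix a radial $(u,v)\in\PR^-(a,b)$, so that $t_-(u,v)=0$, and use the pure fiber path $\ga(\tau)=(0,\q(\tau)\s(u,v))$ where $\q\colon[0,1]\to\R$ is continuous and monotone with $\q(0)=t_+(u,v)$ and $\q(1)=S$ for $S$ large. Because $s\s\cdot$ preserves both $L^2$-norms and radial symmetry, $\ga$ lies in $\R\times\TR_r(a,b)$; its endpoints satisfy $\ga(0)\in\{0\}\times\PR^+(a,b)$ by Lemma \ref{structure1}(a), while $\Phi_{(u,v)}(S)\to-\iy$ forces $\ga(1)\in\{0\}\times I^{2m(a,b)}$ for $S$ large. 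Since $J(\ga(\tau))=\Phi_{(u,v)}(\q(\tau))\le\max_{t\ge t_+(u,v)}\Phi_{(u,v)}(t)=\Phi_{(u,v)}(0)=I(u,v)$, I obtain $\sigma_1\le I(u,v)$, hence $\sigma_1\le\inf_{\PR^-(a,b)\cap H_{rad}}I$.

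For the lower bound, the key claim is that every $\ga\in\Ga_1$ must cross $\PR^-(a,b)$. I consider the continuous function $\tau\mapsto s(\tau)-t_-(w(\tau))$. At $\tau=0$, $w(0)\in\PR^+(a,b)$ gives $t_+(w(0))=0$, hence $t_-(w(0))>0$, and with $s(0)=0$ this quantity is negative. At $\tau=1$ I claim $t_-(w(1))<0$, so the quantity is positive: indeed $\Phi_{w(1)}(0)=I(w(1))\le 2m(a,b)<m(a,b)\le\Phi_{w(1)}(t_+(w(1)))$, and since $\Phi_{w(1)}\ge\Phi_{w(1)}(t_+(w(1)))$ on $(-\iy,t_-(w(1))]$, the strict inequality forces $0\in(t_-(w(1)),+\iy)$. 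The intermediate value theorem then yields $\tau^*$ with $s(\tau^*)=t_-(w(\tau^*))$, i.e. $s(\tau^*)\s w(\tau^*)\in\PR^-(a,b)$, whence $\max_\tau J(\ga(\tau))\ge I(s(\tau^*)\s w(\tau^*))\ge l(a,b)$ and therefore $\sigma_1\ge l(a,b)$.

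Combining the two bounds gives $l(a,b)\le\sigma_1\le\inf_{\PR^-(a,b)\cap H_{rad}}I$, so it remains to reconcile the radial and full infima. Here I would run a Schwarz-symmetrization argument: for $(u,v)\in\PR^-(a,b)$, passing to the symmetric decreasing rearrangement preserves $\|\cdot\|_2$ and $\|\cdot\|_p$, lowers $\nt{\nabla u}^2+\nt{\nabla v}^2$, and raises $\int_\RN|u|^\al|v|^\beta\dx$; since $\nu>0$ this makes $\Phi$ pointwise smaller in $t$, so reprojecting onto $\PR^-$ (via the unique $t_-$) gives a radial competitor with $I\le I(u,v)$. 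This yields $\inf_{\PR^-(a,b)\cap H_{rad}}I=l(a,b)$ and closes the argument. I expect the delicate point to be the sign analysis at $\tau=1$: extracting $t_-(w(1))<0$ from the sublevel condition $I(w(1))\le 2m(a,b)$ is exactly where the endpoint set $I^{2m(a,b)}$ (rather than $I^0$ or $I^{m(a,b)}$) is used, the factor $2$ ensuring $\Phi_{w(1)}(0)$ lies strictly below the local-minimum value and hence on the far descending branch of the fiber.
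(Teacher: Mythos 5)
Your proof is correct and follows essentially the same route as the paper's: a pure fiber path starting at $t_+(u,v)\s(u,v)$ yields $\sigma_1\le I(u,v)$, and an intermediate-value crossing argument—using $I(w(1))\le 2m(a,b)<m(a,b)\le\Phi_{w(1)}(t_+(w(1)))$ to force $t_-(w(1))<0$, hence a crossing of $\PR^-(a,b)$—yields $\sigma_1\ge l(a,b)$, exactly as in the paper (your direct IVT on $s(\tau)-t_-(w(\tau))$ is just a minor variant of the paper's reduction to $\ga_1\equiv 0$). Your additional Schwarz-symmetrization step reconciling $\inf_{\PR^-(a,b)\cap H_{rad}}I$ with $l(a,b)$ addresses a point the paper's own proof glosses over (it applies the fiber-path construction to an arbitrary, possibly nonradial, $(u,v)\in\PR^-(a,b)$ even though $\Ga_1$ consists of paths in $\R\times\TR_r(a,b)$), so it is a careful patch of the same argument rather than a different approach.
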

					\begin{proof}
						Indeed, for any $(u,v)\in\PR^-(a,b)$,
						let $\ga(t):=\sbr{0, ((1-t)t_+(u,v)+tt_0)\s(u,v)}$ with  $t_0>0$ large enough. So $\ga\in \Ga_1$, and hence
						$$\sigma_1\le \max_{t\in[0,1]} J(\ga(t))\le \max_{t\in\R} \Phi_{(u,v)}(t)=I(u,v),$$
						which gives $\sigma_1\le l(a,b)$. On the other hand, for any $\ga=(\ga_1,\ga_2)\in\Ga_1$, we can assume that $\ga_1=0$.
						Since $\ga_2(0)\in \PR^+(a,b)$ we have $t_-(\ga_2(0))>t_+(\ga_2(0))=0$; since $I(\ga_2(1))\le 2m(a,b)$ we have $t_-(\ga_2(1))<0$
						(in fact, if $t_-(\ga_2(1))\ge0$, then $m(a,b)\le I(\ga_2(1))\le 2m(a,b)$, which is impossible). It follows that there exists
						$\tau_\ga\in(0,1)$ such that $t_-(\ga_2(\tau_\ga))=0$, that is $\ga_2(\tau_\ga)\in\PR^-(a,b)$, and hence
						$$l(a,b)\le I(\ga_2(\tau_\ga))\le \max_{t\in[0,1]} J(\ga(t)),$$
						which gives $l(a,b)\le \sigma_1$. Then we obtain that $l(a,b)=\sigma_1.$    We conclude also that
						\be\label{tem3-5-1}
						\ga_2([0,1])\cap \PR^-(a,b)\neq\emptyset,\quad\forall \ga\in\Ga_1.
						\ee
					\end{proof}
					Now  we can construct a  Palais-Smale sequence at level $l(a,b)$.
					
					\bl\label{PSseq1}
					There exists a Palais-Smale sequence $(u_n,v_n)\subset \TR(a,b)\cap H_{rad}$  for $I|_{\TR(a,b)}$ at
					level $l(a,b)$, with $P(u_n,v_n)\to0$ and $u_n^-,v_n^-\to0$ a.e. in $\RN$ as $n\to\iy$.
					\el
					\bp
					Following  the strategies in \cite[Section 5]{Ghoussoub=1993}, it is easy to check that $\FR=\lbr{A=\ga([0,1]):\ga\in\Ga_1}$
					is a homotopy stable family of compact
					subsets of $X=\R\times\TR_r(a,b)$ with boundary $B=(0, \PR^+(a,b))\cup(0, I^{2m(a,b)})$. Set
					$F=(0,\PR^-(a,b))$, then \eqref{tem3-5-1} gives that
					$$A\cap F\setminus B\neq\emptyset,\quad\forall A\in\FR.$$
					Moreover, we have
					$$\sup I(B)\le0<\sigma_1\le\inf I(F).$$
					So the assumptions $(F'1)$ and $(F'2)$ in \cite[Theorem 5.2]{Ghoussoub=1993} are
					satisfied. Therefore, taking a minimizing
					sequence $\lbr{\ga_n=(0,\beta_n)}\subset\Ga_1$ with $\beta_n\ge0$ a.e. in $\RN$, there exists a Palais-Smale sequence
					$\lbr{(s_n,w_n,z_n)}\subset\R\times \TR_r(a,b)$ for $J|_{\R\times\TR_r(a,b)}$ at level $\sigma_1$, that is
					\be\label{tem8}
					\pa_s J(s_n,w_n,z_n)\to0\quad\text{and}\quad \pa_{(u,v)} J(s_n,w_n,z_n)\to0\quad \text{as }n\to\iy,
					\ee
					with the additional property
					\be\label{tem7}
					|s_n|+\text{dist}_{H}((w_n,z_n),\beta_n([0,1]))\to0 \quad \text{as }n\to\iy.
					\ee
					Let $(u_n,v_n)=s_n\s (w_n,z_n)$.
					The first condition in \eqref{tem8} implies $P(u_n,v_n)\to0$, while the second condition gives
					\be
					\begin{aligned}
						&\quad~~\nm{\rd I|_{\TR(a,b)}(u_n,v_n)}\\
						&=\sup\lbr{|\rd I(u_n,v_n)[(\phi,\psi)]|:~(\phi,\psi)\in T_{(u_n,v_n)}\TR(a,b),\nm{(\phi,\psi)}_H\le1}\\
						&=\sup\lbr{|\rd I(s_n\s (w_n,z_n))[s_n\s (-s_n)\s(\phi,\psi)]|:~(\phi,\psi)\in T_{(u_n,v_n)}\TR(a,b),\nm{(\phi,\psi)}_H\le1}\\
						&=\sup\lbr{|\pa_{(u,v)} J(s_n,w_n,z_n)[(-s_n)\s(\phi,\psi)]|:~(\phi,\psi)\in T_{(u_n,v_n)}\TR(a,b),\nm{(\phi,\psi)}_H\le1}\\
						&\le \nm{\pa_{(u,v)} J(s_n,w_n,z_n)}
						\sup\lbr{\nm{(-s_n)\s(\phi,\psi)]}_H:~(\phi,\psi)\in T_{(u_n,v_n)}\TR(a,b),\nm{(\phi,\psi)}_H\le1}\\
						&\le C \nm{\pa_{(u,v)} J(s_n,w_n,z_n)}\to0\quad \text{as }n\to\iy.
					\end{aligned}
					\ee
					{	Finally, \eqref{tem7} implies that $u_n^-,v_n^-\to0$ a.e. in $\RN$.}
					\ep

					\vskip 0.2in
					According to Proposition \ref{PS1}, the subsequent step in finding a second critical point of $I|_{\TR(a,b)}$ involves establishing the inequality:
					\begin{equation}\label{s2}
						0<l(a,b) <m(a,b)+\f{\nu^{-(N-2)/2}}{N}\SR_{\al,\beta}^{N/2}.  
					\end{equation}
					It is noteworthy that  for any $u\in\PR^-(a,b)$, we can deduce from  Lemma \ref{structure1} that $0$ is the unique maximum point of $\Phi_{(u,v)}$. Consequently,  we obtain
					\be\label{tem4-13-1}
					\begin{aligned}
						I(u,v)&=\max_{t\in\R}I(t\s(u,v))\\&\ge\max_{t\in\R} h(\sbr{\nt{\nabla (t\s u)}^2+\nt{\nabla (t\s v)}}^{1/2})
						=\max_{t\in\R} h(t)>h(\rho_*)>0	,
					\end{aligned}
					\ee
					which means $l(a,b)>0$.
					However, the proof of the second inequality is notably more challenging. We begin by establishing several properties of $l(a,b)$.
					
					\bl\label{tem3-6-5}
					The following statements hold.
					\begin{itemize}[fullwidth,itemindent=0em]
						\item[(1)]	For any $0\le a_1\le a$ and $0\le b_1\le b$, there is $l(a,b)\le l(a_1,b_1)$.
						\item[(2)]	Set
						\be\label{tem3-6-2}
						\Ga(a,b):=\lbr{ \ga(t)\in C([0,1],\bar\TR(c,d)):
							\ga(0)\in \bar V(a,b),~\ga(1)\in I^{2m(a,b)} }.
						\ee
						with
						$$\bar\TR(a,b):=\bigcup_{c\in[a/2,a],d\in[b/2,b]} \TR(c,d),\quad
						\bar V(a,b):=\bigcup_{c\in[a/2,a],d\in[b/2,b]} V(c,d)$$
						and
						$$V(c,d):=\lbr{(u,v)\in\TR(c,d):~I|_{\TR(c,d)}'(u,v)=0,~I(u,v)=m(c,d)}.$$
						Then $l(a,b)\le \inf_{\ga\in\Ga(a,b)}\max_{t\in[0,1]}I(\ga(t))$.
					\end{itemize}
					\el
					\bp
					\begin{itemize}[fullwidth,itemindent=0em]
						\item[(1)] The proof is inspired by   \cite[Lemma 3.2]{Jeanjean-Lu=CVPDE=2020}, here we give the details for the completeness.
						It is sufficient to prove that for any arbitrary $\e>0$, one has
						\be\label{tem2-4}
						l(a,b)\le l(a_1,b_1)+\e.
						\ee
						By the definition of $l(a_1,b_1)$, there exists $(u,v)\in\PR^-(a,b)$ such that
						\be\label{tem2-1}
						I(u,v)\le l(a_1,b_1)+\f{\e}{2}.
						\ee
						Let $\vp\in\CR_0^\iy(\RN)$ be radial and such that
						$$0\le\vp\le1;\quad\vp(x)=1,~\text{when~}|x|\le1; \quad\vp(x)=0~\text{when}~|x|\ge2.$$
						For any small $\delta>0$, we define $u_\delta(x):=u(x)\vp(\delta x)$, $v_\delta(x):=v(x)\vp(\delta x)$.
						Since $(u_\delta,v_\delta)\to(u,v)$ in $H$ as $\delta\to0^+$,
						by Lemma \ref{structure1}, one has $\lim_{\delta\to0^+}t_-(u_\delta,v_\delta)=t_-(u,v)$ and hence
						$$t_-(u_\delta,v_\delta)\s(u_\delta,v_\delta)\to t_-(u,v)\s(u,v)\quad
						\text{in}~H~\text{as}~\delta\to0^+.$$
						As a consequence, we can fix a $\delta>0$ small enough such that
						\be\label{tem2-2}
						I(t_-(u_\delta,v_\delta)\s(u_\delta,v_\delta))\le I(u,v)+\f{\e}{4}.
						\ee
						Now take $\psi\in\CR_0^\iy(\RN)$ such that $supp(\psi)\subset B(0,\f{4}{\delta})^c$ and set
						$$\psi_a=\f{\sqrt{a^2-\nt{u_\delta}^2}}{\nt{\psi}}\psi\quad\text{and}
						\quad\psi_b=\f{\sqrt{b^2-\nt{v_\delta}^2}}{\nt{\psi}}\psi.$$
						Then for any $\tau\le0$, one has
						$$\sbr{supp(u_\delta)\cup supp(u_\delta)}\cap\sbr{supp(\tau\s\psi_a)\cup supp(\tau\s\psi_b)}=\emptyset,$$
						and hence
						$$(\tilde u_\tau,\tilde v_\tau):=(u_\delta+\tau\s\psi_a,v_\delta+\tau\s\psi_b)\in \TR(a,b).$$
						Let $t_\tau:=t_-(\tilde u_\tau,\tilde v_\tau)$ which is given in Lemma \ref{structure1}.
						{From $P(t_\tau\s(\tilde u_\tau,\tilde v_\tau))=0$, i.e.,}
						$$e^{(2-p\ga_p)t_\tau}(\nm{\tilde u_\tau}^2+\nt{\tilde v_\tau}^2)
						=\nm{\tilde u_\tau}_p^p+\nm{\tilde v_\tau}_p^p
						+\nu e^{(2^*-p\ga_p)t_\tau}\int_\RN|\tilde u_\tau|^\al|\tilde v_\tau|^\beta\dx,$$
						we see that $\limsup_{\tau\to-\iy} t_\tau<+\iy$, since $(\tilde u_\tau,\tilde v_\tau)\to(u_\delta,v_\delta)\neq(0,0)$ as
						$\tau\to-\iy$. Then one has that $t_\tau+\tau\to-\iy$ as $\tau\to-\iy$ and that for $\tau<-1$ small enough
						\be\label{tem2-3}
						I((t_\tau+\tau)\s(\psi_a,\psi_b))<\f{\e}{4}.
						\ee
						Now using \eqref{tem2-1}, \eqref{tem2-2} and \eqref{tem2-3}, we obtain
						\begin{align*}
							l(a,b)&\le I(t_\tau\s(\tilde u_\tau,\tilde v_\tau))\\
							&=I(t_\tau\s(u_\delta,v_\delta))+I((t_\tau+\tau)\s(\psi_a,\psi_b))\\
							&\le I(t_\nu(u_\delta,v_\delta)\s(u_\delta,v_\delta))+\f{\e}{4}\\
							&\le I(u,v)+\f{\e}{4}\le l(a_1,b_1)+\e,
						\end{align*}
						which completes the proof.
						
						\vskip 0.1in
						\item[(2)]	Take a $\ga(t)\in\Ga(a,b)$. Since $\ga(0)\in V(c,d)$ for some $c\in[a/2,a]$ and $d\in[b/2,b]$ we have
						$t_-(\ga(0))>t_+(\ga(0))=0$; since $I(\ga(1))<2m(a,b)$ we have $t_-(\ga(1))<0$. So there exists a $t_\ga\in(0,1)$
						such that $t_-(\ga(t_\ga))=0$, i.e., $\ga(t_\ga)\in\PR^-(c,d)$ for some $c\in[a/2,a]$ and $d\in[b/2,b]$. Then
						$$\max_{t\in[0,1]} I(\ga(t))\ge I(\ga(t_\ga))\ge l(c,d)\ge l(a,b),$$
						which completes the proof.
					\end{itemize}
					\ep

					Let
					\be\label{eta}
					\eta_\e(x)=\phi U_\e,
					\ee
					where $U_\e$ is defined by \eqref{soliton} and $\phi\in\CR_0^\iy(\RN)$ is a radial cut-off function
					$$0\le\phi\le1;\quad\phi(x)=1~\text{when~}|x|\le1; \quad\phi(x)=0~\text{when}~|x|\ge2.$$
					Then we have the following estimations, and the  proofs can be found in \cite{Jeanjean=2020,Soave=JFA=2020}.
					\bl\label{est}
					As $\e\to0^+$, we have
					$$\nt{\nabla \eta_\e}^2=\SR^{N/2}+O(\e^{N-2}),\quad \ns{\eta_\e}^{2^*}=\SR^{N/2}+O(\e^N),$$
					$$\nt{\eta_\e}^2=\begin{cases} 	O(\e^2)&\quad \text{if}\quad N\ge5,\\
						O(\e^2|\log \e|) &\quad\text{if}\quad N=4,\\
						O(\e)&\quad\text{if}\quad N=3,		\end{cases}$$
					$$\nm{\eta_\e}_p^p=\begin{cases}  O(\e^{N-(N-2)p/2})&\quad\text{if}\quad N\ge4~\text{and}~p\in(2,2^*)~
						\text{or if}~N=3~\text{and}~p\in(3,6),\\
						O(\e^{p/2})&\quad\text{if}\quad N=3~\text{and}~p\in(2,3),\\
						O(\e^{3/2}|\log\e|)&\quad\text{if} \quad N=3 ~\text{and}~p=3.		\end{cases} $$
					\el
					
					\vskip 0.13 in	
					
					Observe that for $p>2$ and $\al,\beta>1$,
					\be\label{tem4-13-2}
					(1+t)^p\ge1+t^p+pt,\quad\forall t>0,
					\ee
					\be\label{tem4-13-3}
					(1+t_1)^\al(1+t_2)^\beta\ge 1+t_1^\al t_2^\beta,\quad\forall t_1,t_2>0.
					\ee
					Then for any $u,v>0$, $t>0$, there holds
					\be\label{tem3-6-4}
					I(u+t\sqrt\al\eta_\e,v+t\sqrt\beta\eta_\e)
					\le I(u,v)+I(t\sqrt\al\eta_\e,t\sqrt\beta\eta_\e)+t\int_\RN \sbr{\sqrt\al\nabla u+\sqrt\beta\nabla v}\cdot\nabla \eta_\e \dx
					\ee
					
					\bl\label{tem3-6-3}
					For small $\e>0$,
					\begin{itemize}[fullwidth,itemindent=0em]
						\item[(1)]	there exists a $t_*>0$ independent with $\e$ such that
						$\max_{t\ge t_*}I(t\sqrt\al\eta_\e,t\sqrt\beta\eta_\e)+t\le 2m(a,b)$;
						\item[(2)]	$\max_{t>0}I(t\sqrt\al\eta_\e,t\sqrt\beta\eta_\e)
						<\f{\nu^{-(N-2)/2}}{N}\SR_{\al,\beta}^{N/2}+O(\e^{N-2})-C\nm{\eta_\e}_p^p$
					\end{itemize}
					\el
					\bp
					\begin{itemize}[fullwidth,itemindent=0em]
						\item[(1)]	Observe that
						\be\label{tem3-6-1}
						I(t\sqrt\al\eta_\e,t\sqrt\beta\eta_\e)\le \f{\al+\beta}{2}t^2\nt{\nabla\eta_\e}^2
						-\f{\nu\al^{\al/2}\beta^{\beta/2}}{2^*}t^{2^*}\ns{\eta_\e}^{2^*}.
						\ee
						We have that $\nt{\nabla\eta_\e}^2\to\SR^{N/2}$ and $\ns{\eta_\e}^{2^*}\to\SR^{N/2}$, see Lemma \ref{est}.
						Thus there exists a $t_*>0$ independent with $\e$ such that $\max_{t\ge t_*}I(t\sqrt\al\eta_\e,t\sqrt\beta\eta_\e)+t\le 2m(a,b)$.
						
						\vskip 0.1in
						\item[(2)]		In view of \eqref{tem3-6-1}, if $t\to0$, we have
						$I(t\sqrt\al\eta_\e,t\sqrt\beta\eta_\e)<\f{1}{2}\f{\nu^{-(N-2)/2}}{N}\SR_{\al,\beta}^{N/2}$.
						Moreover, for a small $t_0>0$,
						\be
						\begin{aligned}
							\max_{t\ge t_0}I(t\sqrt\al\eta_\e,t\sqrt\beta\eta_\e)
							&\le \max_{t>0}\f{\al+\beta}{2}t^2\nt{\nabla\eta_\e}^2
							-\f{\nu\al^{\al/2}\beta^{\beta/2}}{2^*}t^{2^*}\ns{\eta_\e}^{2^*}-C\nm{\eta_\e}_p^p\\
							&=\f{1}{N}\mbr{ \f{(\al+\beta)\nt{\nabla\eta_\e}^2 }
								{\sbr{\nu\al^{\al/2}\beta^{\beta/2}\ns{\eta_\e}^{2^*}}^{2/2^*}} }^{N/2}-C\nm{\eta_\e}_p^p\\
							&=\f{\nu^{-(N-2)/2}}{N} \sbr{\sbr{\f{\al}{\beta}}^{\beta/2^*}+\sbr{\f{\beta}{\al}}^{\al/2^*}}
							\sbr{\SR+O(\e^{N-2})}^{N/2}-C\nm{\eta_\e}_p^p\\
							&=\f{\nu^{-(N-2)/2}}{N}\SR_{\al,\beta}^{N/2}+O(\e^{N-2})-C\nm{\eta_\e}_p^p.
						\end{aligned}
						\ee
						Thus we conclude that
						$$\max_{t>0}I(t\sqrt\al\eta_\e,t\sqrt\beta\eta_\e)
						<\f{\nu^{-(N-2)/2}}{N}\SR_{\al,\beta}^{N/2}+O(\e^{N-2})-C\nm{\eta_\e}_p^p.$$
					\end{itemize}
					\ep
					
					Now let
					$$a_\e^2:=a^2-2t_*^2\nt{\eta_\e}^2,\quad b_\e^2:=b^2-2t_*^2\nt{\eta_\e}^2.$$
					Clearly $a_\e\to a$, $b_\e\to b$ as $\e\to0$.
					Take $(u_\e,v_\e)\in V(a_\e,b_\e)$ the positive solution obtained in Theorem \ref{thm1} (1).
					Since $u_\e,v_\e$ are radially symmetric, there holds
					\begin{equation}\label{f5}
						\sup_{x\in\RN}\lbr{|u_\e(x)|,|v_\e(x)|}\le C_\e|x|^{-(N-2)/2},
					\end{equation}
					with $C_\e>0$.
					Then using a similar approach as \cite[Lemma 5.5 and  5.6]{Jeanjean=2020}, we can find a sequence $y_\e\in\RN$ such that
					\be
					a_\e\le \nt{u_\e(\cdot-y_\e)+t\sqrt\al\eta_\e}\le a,\quad\forall 0\le t\le t_*,
					\ee
					\be
					b_\e\le \nt{v_\e(\cdot-y_\e)+t\sqrt\beta\eta_\e}\le b,\quad\forall 0\le t\le t_*,
					\ee
					and
					\be\label{tem3-6-6}
					\int_\RN \sbr{\sqrt\al\nabla u_\e(\cdot-y_\e)+\sqrt\beta\nabla v_\e(\cdot-y_\e)}\cdot\nabla \eta_\e\dx
					\le \nt{\eta_\e}^2.
					\ee
					Then we can prove  the following energy estimate.
					\bl\label{keyest}
					If $N\ge4$, we have $$l(a,b) <m(a,b)+\f{\nu^{-(N-2)/2}}{N}\SR_{\al,\beta}^{N/2}.$$
					\el
					\bp
					Let $\ga_\e(t):=(u_\e(\cdot-y_\e)+tt_*\sqrt\al\eta_\e,v_\e(\cdot-y_\e)+tt_*\sqrt\beta\eta_\e)$.
					Clearly $\ga_\e\in\Ga(a,b)$ for $\e>0$ small, see \eqref{tem3-6-2} and Lemma \ref{tem3-6-3}.
					In view of \eqref{tem3-6-4}, \eqref{tem3-6-6}, Lemma \ref{tem3-6-5} and Lemma \ref{tem3-6-3}, we obtain that
					\be\label{tem3-6-11}
					\begin{aligned}
						l(a,b)&\le \max_{t\in[0,1]}I(\ga_\e(t))\le I(u_\e,v_\e)+\max_{t>0}I(t\sqrt\al\eta_\e,t\sqrt\beta\eta_\e)+t_*\nt{\eta_\e}^2\\
						&\le m(a_\e,b_\e)+\f{\nu^{-(N-2)/2}}{N}\SR_{\al,\beta}^{N/2}+O(\e^{N-2})-C\nm{\eta_\e}_p^p+C\nt{\eta_\e}^2
					\end{aligned}
					\ee
					Now we give an upper bound estimate of $m(a_\e,b_\e)$.
					Let $(u_0,v_0)\in V(a,b)$ be the positive solution obtained in Theorem \ref{thm1} (1).
					Let $w_\e=\f{a_\e}{a}u_0$ and $z_\e=\f{b_\e}{b}v_0$.
					Thus $\nt{w_\e}=a_\e$, $\nt{z_\e}=b_\e$ and
					$$\sbr{\nt{\nabla w_\e}^2+\nt{\nabla z_\e}^2}^{1/2}\le \sbr{\nt{\nabla u_0}^2+\nt{\nabla v_0}^2}^{1/2}< \rho_*,$$
					which means $(w_\e,z_\e)\in A_{\rho_*}(a_\e,b_\e)$. From \eqref{tem3-6-7}, it follows that
					\be\label{tem3-6-12}
					\begin{aligned}
						m(a_\e,b_\e)&\le I(w_\e,z_\e)=m(a,b)+I(w_\e,z_\e)-I(u_0,v_0)\\
						&\le m(a,b)+\f{1}{p}\sbr{1-\sbr{\f{a_\e}{a}}^p}\nm{u_0}_p^p+\f{1}{p}\sbr{1-\sbr{\f{b_\e}{b}}^p}\nm{v_0}_p^p\\
						&\quad~~	+\f{\nu}{2^*}\sbr{1-\sbr{\f{a_\e}{a}}^\al\sbr{\f{b_\e}{b}}^\beta}\int_\RN u_0^\al v_0^\beta\dx\\
						&\le m(a,b)+C_1(a^2-a_\e^2)+C_2(b^2-b_\e^2)\\
						&\le m(a,b)+C\nt{\eta_\e}^2.
					\end{aligned}
					\ee
					Combining \eqref{tem3-6-11} and \eqref{tem3-6-12}, we deduce from Lemma \ref{est} that
					\be
					\begin{aligned}
						l(a,b)&\le m(a,b)+\f{\nu^{-(N-2)/2}}{N}\SR_{\al,\beta}^{N/2}+O(\e^{N-2})-C\nm{\eta_\e}_p^p+C\nt{\eta_\e}^2\\
						&=m(a,b)+\f{\nu^{-(N-2)/2}}{N}\SR_{\al,\beta}^{N/2}+O(\e^2)-O(\e^{N-(N-2)p/2})\\
						&<m(a,b)+\f{\nu^{-(N-2)/2}}{N}\SR_{\al,\beta}^{N/2},
					\end{aligned}
					\ee
					where we use the fact $N\ge4$.
					\ep

					\begin{lemma} \label{keyest2}
						
						If $N=3$, we have $$l(a,b) <m(a,b)+\f{\nu^{-(N-2)/2}}{N}\SR_{\al,\beta}^{N/2}.$$
						
					\end{lemma}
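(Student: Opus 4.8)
The plan is to run the same scheme as in Lemma~\ref{keyest}: I construct a path $\ga_\e\in\Ga(a,b)$ (see \eqref{tem3-6-2}) out of the local ground state $(u_\e,v_\e)\in V(a_\e,b_\e)$ furnished by Theorem~\ref{thm1}~(1) and the concentrating profile $(\sqrt\al\,\eta_\e,\sqrt\beta\,\eta_\e)$, and bound $l(a,b)\le\max_t I(\ga_\e(t))$ through Lemma~\ref{tem3-6-5}~(2). The obstruction is that the crude estimate used for $N\ge4$ collapses when $N=3$: there $\nt{\eta_\e}^2=O(\e)$, whereas $\nm{\eta_\e}_p^p$ has exponent $\min\{p/2,\,3-p/2\}>1$ throughout the admissible range $2<p<2+\f4N=\f{10}3$, so the favorable term $-C\nm{\eta_\e}_p^p$ can no longer absorb the error $+C\nt{\eta_\e}^2$. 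I therefore have to produce a genuinely negative interaction of the lower order $\e^{1/2}$.

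To this end I would place the bubble where $u_\e,v_\e$ are bounded below (rather than separating supports by the far translation $y_\e$ used for $N\ge4$) and retain the nonlinear cross terms that were discarded in \eqref{tem3-6-4}. Using the sharpened convexity inequalities $(u+w_1)^p\ge u^p+w_1^p+p\,u^{p-1}w_1$ and the refinement of \eqref{tem4-13-3} keeping its first-order part, the expansion of $I(u_\e+w_1,v_\e+w_2)$ with $w_1=s\sqrt\al\,\eta_\e,\ w_2=s\sqrt\beta\,\eta_\e$ acquires the cross contribution
\be
\int_\RN\sbr{\nabla u_\e\cdot\nabla w_1+\nabla v_\e\cdot\nabla w_2}\dx-\int_\RN\sbr{u_\e^{p-1}w_1+v_\e^{p-1}w_2}\dx-\f{\nu}{2^*}\int_\RN\sbr{\al u_\e^{\al-1}v_\e^\beta w_1+\beta u_\e^\al v_\e^{\beta-1}w_2}\dx.
\ee
Rewriting the gradient term through the Euler--Lagrange system satisfied by $(u_\e,v_\e)$ (with multipliers $\la_1,\la_2>0$), every nonlinear piece cancels and this collapses to $-\la_1\int_\RN u_\e w_1\dx-\la_2\int_\RN v_\e w_2\dx<0$, a quantity of order $\int_\RN u_\e\eta_\e\dx\sim\e^{1/2}$. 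This is the decisive gain.

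Next I would weigh this gain against the price of restoring the mass. Because $\int_\RN u_\e\eta_\e\dx\sim\e^{1/2}\gg\nt{\eta_\e}^2$, overlapping supports forces a deficit $a^2-a_\e^2\sim\e^{1/2}$, whose cost is governed by the sharp asymptotics of $m(a_\e,b_\e)$; its leading coefficient is $\la_i/2$ (readable off from the differentiability of the ground-state energy in the mass, refining the crude bound \eqref{tem3-6-12}), and the factor $1/2$ is precisely what makes the cost cancel the interaction gain to order $\e^{1/2}$. To force the deficit to bind at the same bubble coefficient at which the gain is harvested, I would split the path into a mass-increasing leg that switches the bubble on up to the coefficient realizing $\max_s I(s\sqrt\al\eta_\e,s\sqrt\beta\eta_\e)$, followed by an $L^2$-preserving $\star$-dilation that drives the energy below $2m(a,b)$ so that the endpoint requirement in \eqref{tem3-6-2} is met.

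The main obstacle is the residual term of exact order $\e$ that survives this cancellation, whose sign decides the strict inequality. It is a competition between the positive truncation error $O(\e^{N-2})=O(\e)$ of $\nt{\nabla\eta_\e}^2$ together with the quadratic part of the mass deficit, against the negative second-order cross terms $-\tfrac{p-1}2\int_\RN u_\e^{p-2}w_1^2\dx-\cdots\sim-\nt{\eta_\e}^2$. Showing that the negative contributions prevail is the delicate three-dimensional counterpart of the borderline Brezis--Nirenberg estimate, and it is here that I expect the smallness hypothesis \eqref{Hsubcritical} (equivalently, the smallness of $a,b$), together with the sharp asymptotics of Lemma~\ref{est}, to be exploited.
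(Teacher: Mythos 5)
Your diagnosis of why the $N\ge4$ argument collapses when $N=3$ is correct, and your second paragraph does match one ingredient of the paper's proof: the bubble is superposed on the ground state (no far translation), and the Euler--Lagrange system turns the gradient cross term into $-\la_{1,+}\int_{\R^3}u_+\eta_\e\dx-\la_{2,+}\int_{\R^3}v_+\eta_\e\dx\sim-\e^{1/2}$. The gap is in what comes next. By your own accounting, this gain is \emph{exactly} cancelled by the cost of restoring the constraint (mass deficit $a^2-a_\e^2\approx 2\int u_\e w_1\dx\sim\e^{1/2}$ priced at coefficient $\la_i/2$), so your proof of the strict inequality is deferred to an order-$\e$ competition between the positive truncation error $O(\e^{N-2})=O(\e)$ and negative second-order terms, which you never carry out and merely "expect" to go the right way; in dimension $3$ this is exactly the borderline Brezis--Nirenberg situation that soft arguments cannot decide, and the smallness condition \eqref{Hsubcritical} gives no obvious help there. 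The paper never faces this competition, because it restores the constraint differently: not by shrinking the initial masses, but by the $N=3$-specific dilation \eqref{f3}, $W_{\e,t}=\tau^{1/2}w_{\e,t}(\tau\cdot)$ with $\tau=\nt{w_{\e,t}}/a\ge1$, which preserves $\nt{\nabla\cdot}^2$ exactly when $N=3$. The subcritical $p$-terms' response to this dilation is what cancels the Euler--Lagrange gain, via the Pohozaev relation $\la_{1,+}a^2+\la_{2,+}b^2=(1-\ga_p)\sbr{\np{u_+}^p+\np{v_+}^p}$, yielding $I_2=O(\e)$ in \eqref{f9}; the decisive strictly negative term of order $\e^{1/2}$ then comes from a source your scheme never touches, namely the response $I_3$ of the Sobolev-critical coupling term to the dilation: since $\tau,\xi=1+O(\e^{1/2})$, $\tau,\xi\ge1$, and $\int|u_+|^\al|v_+|^\beta\dx$ is bounded below, one gets $I_3\le-K\e^{1/2}+o(\e^{1/2})$ as in \eqref{f10}. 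Without an asymmetry of this kind, a construction in which mass restoration is "neutral except through the ground-state energy" provably cannot close at order $\e^{1/2}$.

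A secondary gap: your cost estimate needs the two-sided sharp expansion $m(a_\e,b_\e)=m(a,b)+\tfrac{\la_1}{2}(a^2-a_\e^2)+\tfrac{\la_2}{2}(b^2-b_\e^2)+o(\e^{1/2})$, i.e.\ differentiability of the constrained ground-state energy in the masses with derivatives given by the multipliers. Nothing of this sort is available: the paper only establishes the crude one-sided bound \eqref{tem3-6-12}, $m(a_\e,b_\e)\le m(a,b)+C\nt{\eta_\e}^2$, which suffices for $N\ge4$ only because there the deficit is $O(\nt{\eta_\e}^2)$. Differentiability of $a\mapsto m(a,b)$ typically requires uniqueness or nondegeneracy of the minimizer, so you would be importing a substantial unproved ingredient precisely at the point where exact constants decide the outcome.
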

					
					\begin{proof}
						By Theorem \ref{thm1} (1), we obtain a normalized ground state solution  $(u_+,v_+)\in \PR^+(a,b)$  of system \eqref{mainequ} with  $\la_{1,+},\la_{2,+}>0$, which is positive and  radially symmetric.  Recall the definition of $\eta_\e$ (see \eqref{eta}), we take
						\begin{equation}
							w_{\varepsilon,t}=u_{+}+t\sqrt{\al}\eta_\e, \quad v_{\varepsilon,t}=v_{+}+t\sqrt{\beta}\eta_\e \quad  \text{ for } t>0.
						\end{equation}
						Now we  define
						\begin{equation}\label{f3}
							W_{\varepsilon,t}(x)=\tau^{\frac{1}{2}}w_{\varepsilon,t}(\tau x), \quad V_{\varepsilon,t}(x)= \xi^{\frac{1}{2}}v_{\varepsilon,t}(\xi x).
						\end{equation}
						Since $N=3$, there holds
						\begin{equation} \label{f1}
							\begin{aligned}
								& \nt{\nabla 	W_{\varepsilon,t}}^2= \nt{\nabla 	w_{\varepsilon,t}}^2, \quad \quad     \nt{\nabla  V_{\varepsilon,t}}^2= \nt{\nabla 	v_{\varepsilon,t}}^2,\\
								&\nt{ 	W_{\varepsilon,t}}^2= \tau^{-2}\nt{ 	w_{\varepsilon,t}}^2, \quad  \quad \nt{  V_{\varepsilon,t}}^2= \xi^{-2} \nt{  	v_{\varepsilon,t}}^2,\\
								&\np{ 	W_{\varepsilon,t}}^p= \tau^{p\ga_p-p}\np{ 	w_{\varepsilon,t}}^p, \quad \np{  V_{\varepsilon,t}}^p= \xi^{p\ga_p-p} \np{  	v_{\varepsilon,t}}^p.
							\end{aligned}
						\end{equation}	
						By choosing
						\begin{equation}\label{f4}
							\tau =\cfrac{\nt{ 	w_{\varepsilon,t}} }{a}\geq 1, \quad \xi=\cfrac{\nt{ 	v_{\varepsilon,t}} }{b} \geq 1,
						\end{equation}
						we obtain that $(	W_{\varepsilon,t},  V_{\varepsilon,t}) \in\TR(a,b)$. Then  by Lemma \ref{structure1}, there exists a unique $k_{\varepsilon,t}  >0$ such that
						\begin{equation}
							\sbr{\overline{W}_{\varepsilon,t}, \overline{V}_{\varepsilon,t}}	:=\sbr{	k_{\varepsilon,t}^{\frac{3}{2}} 	W_{\varepsilon,t}(k_{\varepsilon,t}x), k_{\varepsilon,t}^{\frac{3}{2}} 	V_{\varepsilon,t}(k_{\varepsilon,t}x)  } \in \mathcal{P}^-(a,b),
						\end{equation}
						which implies that
						\begin{equation} \label{f2}
							k_{\varepsilon,t}^2\sbr{ \nt{\nabla 	W_{\varepsilon,t}}^2 +\nt{\nabla 	V_{\varepsilon,t}}^2}= \ga_{p}k_{\varepsilon,t}^{p\ga_{p}} \sbr{ \np{ 	W_{\varepsilon,t}}^p+\np{ 	V_{\varepsilon,t}}^p} +\nu k_{\varepsilon,t}^{2^*}\int_{\R^3}|W_{\varepsilon,t}|^\alpha|V_{\varepsilon,t}|^\beta\dx.
						\end{equation}
						Since  $(u_+,v_+)\in \PR^+(a,b)$  and  it follows from Lemma \ref{structure1}  that   $k_{\varepsilon,0} >1$. On the other hand, since $2<p<2+\frac{4}{N}$ and $0<p\ga_{p}<2$,  one can deduce from  \eqref{f1}, \eqref{f2} that $k_{\varepsilon,t}\to 0$ as  $t\to \infty$. By using Lemma  \ref{structure1} again,   $k_{\varepsilon,t}$ is continuous for $t$,  which implies that there exists  $t_\e>0$ such that $k_{\varepsilon,t_\e}=1$ for $\varepsilon$ small enough.  Hence,
						\begin{equation}\label{f12}
							\begin{aligned}
								l(a,b)=\inf_{(u,v)\in\PR^-(a,b)}I(u,v) \leq I(\overline{W}_{\varepsilon,t_\e}, \overline{V}_{\varepsilon,t_\e})=I(W_{\varepsilon,t_\e}, V_{\varepsilon,t_\e}) \leq \sup_{t>0}I(W_{\varepsilon,t }, V_{\varepsilon,t }).
							\end{aligned}
						\end{equation}
						Note that  $(u_+,v_+)$  is the    normalized ground state solution   of system \eqref{mainequ}  and $\eta_\e$ is a  positive function, one can deduce from \eqref{f1} and Lemma \ref{est} that there exists $t_0>0$ such that
						\begin{equation}\label{f11}
							\begin{aligned}
								I(W_{\varepsilon,t }, V_{\varepsilon,t })\leq m(a,b)+\f{\nu^{-(N-2)/2}}{N}\SR_{\al,\beta}^{N/2} - \tilde{\delta}
							\end{aligned}
						\end{equation}
						for $t<t_0^{-1}$ and $t>t_0$ with $\tilde{\delta} >0$.
						
						On the other hand,
						for   $t_0^{-1} \leq t \leq t_0$,    by \eqref{tem4-13-3}, \eqref{f3} and  \eqref{f1}  we have
						\begin{equation}\label{f7}
							\begin{aligned}
								I(W_{\varepsilon,t }, V_{\varepsilon,t })&=\frac{1}{2}\sbr{\nt{\nabla 	w_{\varepsilon,t}}^2 +\nt{\nabla 	v_{\varepsilon,t}}^2}-\frac{1}{p}\tau^{p\ga_p-p}\np{ 	w_{\varepsilon,t}}^p-\frac{1}{p}\xi^{p\ga_p-p}\np{ 	v_{\varepsilon,t}}^p\\
								&-\frac{\nu}{2^*}\tau^{\frac{\alpha}{2}}\xi^{\frac{\beta}{2}}\int_{\R^3}|w_{\varepsilon,t}(\tau x)|^\alpha|v_{\varepsilon,t}(\xi x)|^\beta \mathrm{d} x\\
								&\leq I(u_+,v_+)+\f{\al+\beta}{2}t^2\nt{\nabla\eta_\e}^2
								-\f{\nu\al^{\al/2}\beta^{\beta/2}}{2^*}t^{2^*}\ns{\eta_\e}^{2^*}-\frac{\al^{\frac{p}{2}}+\beta^{\frac{p}{2}}}{p}t^p\nm{\eta_\e}_p^p\\
								&+t\int_{\R^3}(\sqrt{\alpha}\nabla u_+ + \sqrt{\beta}\nabla v_+)\cdot \nabla \eta_\e+\frac{1-\tau^{p\ga_{p}-p}}{p}\np{ 	w_{\varepsilon,t}}^p+\frac{1-\xi^{p\ga_{p}-p}}{p}\np{ 	v_{\varepsilon,t}}^p \\
								&+\frac{\nu}{2^*}\sbr{ \int_{\R^3}|w_{\varepsilon,t}(  x)|^\alpha|v_{\varepsilon,t}(  x)|^\beta \mathrm{d} x-\tau^{\frac{\alpha}{2}}\xi^{\frac{\beta}{2}}\int_{\R^3}|w_{\varepsilon,t}(\tau x)|^\alpha|v_{\varepsilon,t}(\xi x)|^\beta \mathrm{d} x  }\\
								&=:I_1+I_2+\frac{\nu}{2^*} I_3.
							\end{aligned}
						\end{equation}
						For  $t_0^{-1} \leq t \leq t_0$, since $I(u_+,v_+)=m(a,b)$, we  deduce from Lemma \ref{est} and \eqref{Sobolev2} that
						\begin{equation}\label{f8}
							\begin{aligned}
								I_1&\leq I(u_+,v_+)+ \sup_{t>0}\sbr{ \f{\al+\beta}{2}t^2\nt{\nabla\eta_\e}^2
									-\f{\nu\al^{\al/2}\beta^{\beta/2}}{2^*}t^{2^*}\ns{\eta_\e}^{2^*}}-C \nm{\eta_\e}_p^p\\
								&\leq m(a,b)+\f{\nu^{-(N-2)/2}}{N}\SR_{\al,\beta}^{N/2}.
							\end{aligned}
						\end{equation}
						Note that  $(u_+,v_+)$ is a radially symmetric solution  of \eqref{mainequ} and decays  to zero as $r\to \infty$, one has
						\begin{equation}
							\int_{\R^3} u_+ \eta_\e \dx \sim \varepsilon^{\frac{5}{2}}\int_{1}^{\frac{1}{\varepsilon}}\sbr{\frac{1}{1+r^2}}^{\frac{1}{2}}r^2 ~\mathrm{d}r\sim \varepsilon^{\frac{1}{2}}, ~~ \text{ and } \quad 	\int_{\R^3} v_+ \eta_\e \dx\sim\varepsilon^{\frac{1}{2}}.
						\end{equation}
						Then we have
						\begin{equation}\label{f6}
							\begin{aligned}
								&\tau^2=1+\frac{2\sqrt{\al}t}{a^2}	\int_{\R^3} u_+ \eta_\e\dx+\frac{t^2\al}{a^2}\int_{\R^3}|\eta_\e|^2\dx=1+O(\varepsilon^{\frac{1}{2}}),\\
								&\xi^2=1+\frac{2\sqrt{\beta}t}{b^2}	\int_{\R^3} v_+ \eta_\e\dx+\frac{t^2\beta}{b^2}\int_{\R^3}|\eta_\e|^2\dx=1+O(\varepsilon^{\frac{1}{2}}).
							\end{aligned}
						\end{equation}	
						Since  $(u_+,v_+) $ is a   solution   of system \eqref{mainequ} with  $\la_{1,+},\la_{2,+}>0$,  it follows from a similar argument as used in \cite[Lemma 5.5]{Jeanjean=2020} that
						\begin{equation}
							\begin{aligned}
								\int_{\R^3} \nabla u_+ \cdot \nabla  \eta_\e\dx &=-\la_{1,+}	\int_{\R^3}   u_+    \eta_\e \dx+ \int_{\R^3} |u_+|^{p-2}u_+ \eta_\e \dx +\frac{\alpha\nu}{2^*}\int_{\R^3}|u_+|^{\al-2}|v_+|^{\beta}u_+ \eta_\e\dx\\
								&	\leq -\la_{1,+}	\int_{\R^3}   u_+    \eta_\e \dx+ C\nt{\eta_\e}^2 =-\la_{1,+}	\int_{\R^3}   u_+  \eta_\e \dx +O(\varepsilon).
							\end{aligned}
						\end{equation}	
						Similarly, we have
						\begin{equation}
							\int_{\R^3} \nabla v_+ \cdot \nabla  \eta_\e\dx \leq -\la_{2,+}	\int_{\R^3}   v_+  \eta_\e \dx +O(\varepsilon).
						\end{equation}
						Therefore, combining these with \eqref{f6} we have
						\begin{equation}
							\begin{aligned}	
								I_2&=	t\int_{\R^3}(\sqrt{\alpha}\nabla u_+ + \sqrt{\beta}\nabla v_+)\cdot \nabla \eta_\e\dx+\frac{1-\tau^{p\ga_{p}-p}}{p}\np{ 	w_{\varepsilon,t}}^p+\frac{1-\xi^{p\ga_{p}-p}}{p}\np{ 	v_{\varepsilon,t}}^p\\
								&\leq \frac{t\sqrt{\al}}{a^2}\sbr{ -\la_{1,+}a^2	\int_{\R^3}   u_+    \eta_\e \dx +(1-\ga_{p})\np{ 	w_{\varepsilon,t}}^p\int_{\R^3}   u_+    \eta_\e\dx}\\
								&+\frac{t\sqrt{\beta}}{b^2}\sbr{ -\la_{2,+}b^2	\int_{\R^3}   v_+    \eta_\e\dx +(1-\ga_{p})\np{ 	v_{\varepsilon,t}}^p\int_{\R^3}   v_+    \eta_\e\dx}+O(\varepsilon).
							\end{aligned}
						\end{equation}	
						Note that the fact $	 \la_{1,+}a^2+  \la_{2,+}b^2=(1-\ga_{p})\sbr{\np{u_+}^p+\np{v_+}^p}$ which comes from the Pohozaev identity  $P(u_+,v_+)=0$, then it is easy to verify that
						\begin{equation}\label{f9}
							I_2=t\int_{\R^3}(\sqrt{\alpha}\nabla u_+  + \sqrt{\beta}\nabla v_+)\cdot \nabla \eta_\e\dx+\frac{1-\tau^{p\ga_{p}-p}}{p}\np{ 	w_{\varepsilon,t}}^p+\frac{1-\xi^{p\ga_{p}-p}}{p}\np{ 	v_{\varepsilon,t}}^p= O(\varepsilon).
						\end{equation}
						Now we estimate
						\begin{equation}
							I_3=\int_{\R^3}|w_{\varepsilon,t}(  x)|^\alpha|v_{\varepsilon,t}(  x)|^\beta \mathrm{d} x-\tau^{\frac{\alpha}{2}}\xi^{\frac{\beta}{2}}\int_{\R^3}|w_{\varepsilon,t}(\tau x)|^\alpha|v_{\varepsilon,t}(\xi x)|^\beta \mathrm{d} x .
						\end{equation}	
						Note that $\tau, \xi \geq 1$,
						\begin{equation}
							\begin{aligned}
								I_3&=(1-\tau^{\frac{\al}{2}}\xi^{\frac{\beta}{2}})\int_{\R^3}|w_{\varepsilon,t}(  x)|^\alpha|v_{\varepsilon,t}(  x)|^\beta \mathrm{d} x\\
								&+\tau^{\frac{\al}{2}}\xi^{\frac{\beta}{2}}\sbr{ \int_{\R^3}|w_{\varepsilon,t}(  x)|^\alpha|v_{\varepsilon,t}(  x)|^\beta \mathrm{d} x -\int_{\R^3}|w_{\varepsilon,t}(  \tau x)|^\alpha|v_{\varepsilon,t}(  x)|^\beta \mathrm{d} x}\\
								&+\tau^{\frac{\al}{2}}\xi^{\frac{\beta}{2}}\sbr{ \int_{\R^3}|w_{\varepsilon,t}(  \tau x)|^\alpha|v_{\varepsilon,t}(  x)|^\beta \mathrm{d} x -\int_{\R^3}|w_{\varepsilon,t}(  \tau x)|^\alpha|v_{\varepsilon,t}(  \xi x)|^\beta \mathrm{d} x}\\
								& \leq \tau^{\frac{\al}{2}}\xi^{\frac{\beta}{2}}(1-\tau)   \int_{\R^3}\al |w_{\varepsilon,t}(  x)|^{\alpha-1}|v_{\varepsilon,t}(  x)|^\beta\sbr{ x\cdot \nabla  w_{\varepsilon,t}(  x)}\mathrm{d} x  +o(|1-\tau|)\\
								&+\tau^{\frac{\al}{2}}\xi^{\frac{\beta}{2}}(1-\xi)   \int_{\R^3}\beta |w_{\varepsilon,t}(  \tau x)|^{\alpha}|v_{\varepsilon,t}(  x)|^{\beta-1}\sbr{ x\cdot \nabla v_{\varepsilon,t}(  x)}\mathrm{d} x  +o(|1-\xi|).
							\end{aligned}
						\end{equation}	
						For small $\varepsilon>0$ and $t_0^{-1}<t<t_0$, we may assume that $\tau,\xi \in \mbr{1,2}$. Moreover, one can easily check that
						\begin{equation}
							\int_{\R^3}|\eta_\e|^{2^*}\dx=	\int_{\R^3}|\eta_1|^{2^*}\dx, \quad  \int_{\R^3}|x\cdot \nabla \eta_\e|^{2^*}\dx=\int_{\R^3}|x\cdot \nabla \eta_1|^{2^*}\dx,
						\end{equation}
						Hence, with the help of H\"older inequality, it is easy to see that
						\begin{equation}
							\int_{\R^3}\al |w_{\varepsilon,t}(  x)|^{\alpha-1}|v_{\varepsilon,t}(  x)|^\beta\sbr{ x\cdot \nabla  w_{\varepsilon,t}(  x)}\mathrm{d} x \quad \text{ and } \quad \int_{\R^3}\beta |w_{\varepsilon,t}(  \tau x)|^{\alpha}|v_{\varepsilon,t}(  x)|^{\beta-1}\sbr{ x\cdot \nabla v_{\varepsilon,t}(  x)}\mathrm{d} x
						\end{equation}
						are bounded independent of $\varepsilon$. By \eqref{f6}, there exists a constant $K>0$ independent of $\varepsilon$ such that
						\begin{equation}\label{f10}
							I_3\leq -K\sbr{\int_{\R^3}   u_+    \eta_\e\dx +\int_{\R^3}   v_+    \eta_\e\dx }+o(\varepsilon^{\frac{1}{2}})\leq -K\varepsilon^{\frac{1}{2}}+o(\varepsilon^{\frac{1}{2}}).
						\end{equation}
						Therefore, by \eqref{f7},\eqref{f8},\eqref{f9},\eqref{f10}, we have 			
						\begin{equation}
							\begin{aligned}
								I(W_{\varepsilon,t }, V_{\varepsilon,t })&\leq  m(a,b)+\f{\nu^{-(N-2)/2}}{N}\SR_{\al,\beta}^{N/2}-K\varepsilon^{\frac{1}{2}}+o(\varepsilon^{\frac{1}{2}})+O(\varepsilon)\\
								&<m(a,b)+\f{\nu^{-(N-2)/2}}{N}\SR_{\al,\beta}^{N/2}
							\end{aligned}
						\end{equation}
						for $\varepsilon$ small enough and $t_0^{-1}<t<t_0$. Then it follows from 		\eqref{f11} that
						\begin{equation}
							\sup_{t>0}	I(W_{\varepsilon,t }, V_{\varepsilon,t }) <m(a,b)+\f{\nu^{-(N-2)/2}}{N}\SR_{\al,\beta}^{N/2}.
						\end{equation}
						Finally, the conclusion follows directly from \eqref{f12}. This completes the proof.

					\end{proof}

					\bp[\bf Proof of Theorem \ref{thm1} (2) and (3)]
					With the help of Lemma \ref{PSseq1}, Lemma \ref{tem3-6-5},  Lemma \ref{keyest}  and Lemma \ref{keyest2},
					the conclusion is just a combinition of Proposition \ref{PS1} and above lemmas.
					\ep

					\subsection{The case \texorpdfstring{$2+\f{4}{N}\leq p<2^*$}{}}\label{Sect2.2}
					
					In this subsection, we consider the case $2+\f{4}{N}\le p<2^*$ and assume that \eqref{H3-2} hold.
					Following the strategies in  \cite{Soave=JDE=2020}, we have the following lemma.
					\bl\label{structure2}
					For every $(u,v)\in \TR(a,b)$, $\Phi_{(u,v)}(t)$ has exactly one critical point $t_-(u,v)$. Moreover,
					\begin{itemize}[fullwidth,itemindent=0em]
						\item[(a)]	$\PR(a,b)=\PR^-(a,b)$; \item[(b)]$t\s(u,v)\in\PR(a,b)$ if and only if $t=t_-(u,v)$,
						\item[(c)] 	$\Phi_{(u,v)}(t)$ is strictly dereasing and concave on $(t_-(u,v),+\iy)$ and
						$$\Phi_{(u,v)}(t_-(u,v))=\max_{t\in\R}\Phi_{(u,v)}(t)>0,$$
						\item[(d)] 	the map $(u,v)\mapsto t_-(u,v)$ is of class $C^1$.
					\end{itemize}
					\el
					
					Moreover, in a standard way, we can prove
					\bl\label{PSseq2}
					There exists a Palais-Smale sequence $(u_n,v_n)\subset \TR(a,b)\cap H_{rad}$ for $I|_{\TR(a,b)}$ at
					level $m(a,b)$, with $P(u_n,v_n)\to0$ and $u_n^-,v_n^-\to0$ a.e. in $\RN$ as $n\to\iy$.
					\el
					\bp
					Noting that $m(a,b)=\inf_{\ga\in\Ga_2}\max_{t\in[0,1]} J(\ga(t))$
					with $J(s,(u,v)):=I(s\s (u,v))$ and
					$$\Ga_2:=\lbr{\ga\in C([0,1],\R\times \TR_r(a,b)):~\ga(0)\in(0,A_k(a,b)),~\ga(1)\in(0,I^0)},$$
					for a small $k>0$. Then such a required Palais-Smale sequence can be found as in Lemma \ref{PSseq1}.
					For more details, we refer to \cite{Bartsch-Jeanjean-Soave=JMPA=2016}.
					\ep
					
					Now in view of Proposition \ref{PS1}, to obtain the compactness of the Palais-Smale sequence, one need
					some fine estimations about $m(a,b)$, which will be done in the remainder of this section.
					
					\bl\label{est3-1}
					Let $N=3,4$. Then
					\begin{itemize}[fullwidth,itemindent=0em]
						\item[(1)]	for any $0\le a_1\le a$ and $0<b_1\le b$, there is $m(a,b)\le m(a_1,b_1)$;
						\item[(2)]	there holds $0<m(a,b)<\f{\nu^{-(N-2)/2}}{N}\SR_{\al,\beta}^{N/2}$.
					\end{itemize}
					\el
				
					\bp[Proof of Lemma \ref{est3-1}]
					\begin{itemize}[fullwidth,itemindent=0em]
						\item[(1)]	The proof is just the same as that of Lemma \ref{tem3-6-5} (1).
						\item[(2)]	It is natural  that $m(a,b)>0$. Indeed, for any $(u,v)\in\PR(a,b)$, there is
						$$\begin{aligned}
							\nt{\nabla u}^2+\nt{\nabla v}^2&=\ga_p(\nm{u}_p^p+\nm{v}_p^p)+\nu\int_\RN|u|^\al|v|^\beta\dx\\
							&\le C_1(\nt{\nabla u}^2+\nt{\nabla v}^2)^{\f{p\ga_p}{2}}+C_2(\nt{\nabla u}^2+\nt{\nabla v}^2)^{\f{2^*}{2}}\\
						\end{aligned}$$
						which implies $\inf_{\PR(a,b)}\nt{\nabla u}^2+\nt{\nabla v}^2 >0$. So we have
						$$\begin{aligned}
							m(a,b)&=\inf_{\PR(a,b)}I(u,v)=\inf_{\PR(a,b)} I(u,v)-\f{1}{p\ga_p}P(u,v)\\
							&=\f{p\ga_p-2}{2p\ga_p}(\nt{\nabla u}^2+\nt{\nabla v}^2)+\f{2^*-p\ga_p}{2^*p\ga_p}\nu\int_\RN|u|^\al|v|^\beta\dx\\
							&\ge C\inf_{\PR(a,b)}\nt{\nabla u}^2+\nt{\nabla v}^2 >0.
						\end{aligned}$$
						
						Now let $\eta_\e$ be defined by \eqref{eta}. Take $u_\e=\f{\sqrt\al c}{\nt{\eta_\e}}\eta_\e$ and
						$v_\e=\f{\sqrt\beta  c}{\nt{\eta_\e}}\eta_\e$ for a small constant $c>0$.
						Let $t_\e:=t_-(u_\e,v_\e)$ be given by Lemma \ref{structure2}. So $t_\e\s(u_\e,v_\e)\in\PR(\sqrt\al c,\sqrt\beta c)$
						and hence for a proper $c>0$ we obtain
						\be\label{tem3-9-5}
						\begin{aligned}
							m(a,b)&\le m(\sqrt\al c,\sqrt\beta c)\le I(t_\e\s(u_\e,v_\e))\\
							&=\f{1}{2}e^{2t_\e}\sbr{\nt{\nabla u_\e}^2+\nt{\nabla v_\e}^2}-\f{\nu}{2^*}e^{2^*t_\e}\int_\RN u_\e^\al v_\e^\beta\dx
							-\f{1}{p}e^{p\ga_p t_\e}\sbr{\nm{u_\e}_p^p+\nm{v_\e}_p^p}\\
							&\le \max_{s>0}\sbr{\f{\al+\beta}{2}s^2\nt{\nabla\eta_\e}^2
								-\f{\nu\al^{\al/2}\beta^{\beta/2}}{2^*}s^{2^*}\ns{\eta_\e}^{2^*}}-C\nm{\eta_\e}_p^p\nt{\eta_\e}^{-p}e^{p\ga_pt_\e}\\
							&=\f{1}{N}\mbr{ \f{(\al+\beta)\nt{\nabla\eta_\e}^2 }
								{\sbr{\nu\al^{\al/2}\beta^{\beta/2}\ns{\eta_\e}^{2^*}}^{2/2^*}} }^{N/2}-C\nm{\eta_\e}_p^p\nt{\eta_\e}^{-p}e^{p\ga_pt_\e}\\
							&=\f{\nu^{-(N-2)/2}}{N}\SR_{\al,\beta}^{N/2}+O(\e^{N-2})-C\nm{\eta_\e}_p^p\nt{\eta_\e}^{-p}e^{p\ga_pt_\e}.
						\end{aligned}
						\ee
						
						We claim that $e^{t_\e}\ge C\nt{\eta_\e}$ for some constant $C>0$ as $\e\to0$. Now
						we check it. If $p=2+\f{4}{N}$, by  definition $P(t_\e\s(u_\e,v_\e))=0$ and hence
						\be\label{tem3-9-1}
						\begin{aligned}
							e^{(2^*-2)t_\e}&=\f{\nt{\nabla u_\e}^2+\nt{\nabla v_\e}^2}{\nu\int_\RN u_\e^\al v_\e^\beta\dx}
							-\ga_{p}\f{\nm{u_\e}_p^p+\nm{v_\e}_p^p}{ \nu\int_\RN u_\e^\al v_\e^\beta \dx }\\
							&\ge \sbr{1-\f{2\CR(N,p)}{p}(a^{4/N}+b^{4/N})}\f{\nt{\nabla u_\e}^2+\nt{\nabla v_\e}^2}{\nu\int_\RN u_\e^\al v_\e^\beta \dx}\\
							&\ge C \f{\nt{\nabla \eta_\e}^2\nt{\eta_\e}^{2^*-2}}{\ns{\eta_\e}^{2^*}},
						\end{aligned}
						\ee
						where we used \eqref{H3-2}. By Lemma \ref{est}, we have $\nt{\nabla \eta_\e}^2\sim 1$ and $\ns{\eta_\e}^{2^*}\sim1$,
						which in turn together with \eqref{tem3-9-1} gives that $e^{t_\e}\ge C\nt{\eta_\e}$.
						If $p>2+\f{4}{N}$, by  definition $P(t_\e\s(u_\e,v_\e))=0$ and hence
						$$e^{(2^*-2)t_\e}\nu\int_\RN u_\e^\al v_\e^\beta\dx\le\nt{\nabla u_\e}^2+\nt{\nabla v_\e}^2,$$
						whence it follow that
						\be\label{tem3-9-2}
						e^{t_\e}\le\sbr{\f{\nt{\nabla u_\e}^2+\nt{\nabla v_\e}^2}{\nu\int_\RN u_\e^\al v_\e^\beta\dx}}^{1/(2^*-2)}.
						\ee
						By \eqref{tem3-9-2} and using the fact that $p\ga_p>2$,
						\be\label{tem3-9-3}
						\begin{aligned}
							e^{(2^*-2)t_\e}&=\f{\nt{\nabla u_\e}^2+\nt{\nabla v_\e}^2}{\nu\int_\RN u_\e^\al v_\e^\beta\dx}
							-\ga_{p}\f{\nm{u_\e}_p^p+\nm{v_\e}_p^p}{ \nu\int_\RN u_\e^\al v_\e^\beta \dx }e^{(p\ga_p-2)t_\e}\\
							&\ge \f{\nt{\nabla u_\e}^2+\nt{\nabla v_\e}^2}{\nu\int_\RN u_\e^\al v_\e^\beta\dx}
							-\ga_{p}\f{\nm{u_\e}_p^p+\nm{v_\e}_p^p}{ \nu\int_\RN u_\e^\al v_\e^\beta\dx }
							\sbr{\f{\nt{\nabla u_\e}^2+\nt{\nabla v_\e}^2}{\nu\int_\RN u_\e^\al v_\e^\beta \dx}}^{\f{p\ga_p-2}{2^*-2}}\\
							&=\f{\nt{\nabla\eta_\e}^2\nt{\eta_\e}^{2^*-2}}{\ns{\eta_\e}^{2^*}}
							\sbr{C_1-C_2\nt{\nabla\eta_\e}^{-2\f{2^*-p\ga_p}{2^*-2}}
								\ns{\eta_\e}^{2^*\f{p\ga_p-2}{2^*-2}}\f{\nm{\eta_\e}_p^p}{\nt{\eta_\e}^{p(1-\ga_p)}}  }.
						\end{aligned}
						\ee
						Using Lemma \ref{est}, we obtain $\nt{\nabla \eta_\e}^2\sim 1$, $\ns{\eta_\e}^{2^*}\sim1$ and that
						\be\label{tem3-9-4}
						\f{\nm{\eta_\e}_p^p}{\nt{\eta_\e}^{p(1-\ga_p)}}=\begin{cases}
							O(\e^{\f{6-p}{4}})\quad&\text{if}~N=3,\\  O(|\log \e|^{-\f{p(1-\ga_p)}{2}})\quad&\text{if}~N=4.
						\end{cases}
						\ee
						Going back to \eqref{tem3-9-3}, it results that $e^{t_\e}\ge C\nt{\eta_\e}$.
						
						Substituting $e^{t_\e}\ge C\nt{\eta_\e}$ into \eqref{tem3-9-5}, we obtain
						$$m(a,b)\le\f{\nu^{-(N-2)/2}}{N}\SR_{\al,\beta}^{N/2}+O(\e^{N-2})-C\f{\nm{\eta_\e}_p^p}{\nt{\eta_\e}^{p(1-\ga_p)}},$$
						and hence using \eqref{tem3-9-4} we infer that $m(a,b)<\f{\nu^{-(N-2)/2}}{N}\SR_{\al,\beta}^{N/2}$ for any $\e>0$ small,
						which is the desired result.
					\end{itemize}
					\ep
					
					By Lemma  \ref{est3-1}, we  observe that the energy level $m(a,b)$ satisfies \eqref{levelcon2} because $e(a)>0, e(b)>0$.
					However, we still need to show that $m(a,b)\neq e (a)$ and $m(a,b)\neq e (b)$. Actually,   there holds
					\bl\label{est3-2}
					For any $a,b>0$,  the following energy estimate holds
					\begin{equation}
						m(a,b)< \min\lbr{e(a),~e(b)},
					\end{equation}
					under one of the following conditions:
					\begin{itemize}[fullwidth,itemindent=0em]
						\item[(1)]  $a\leq a_0$ and  $b\leq a_0$, when $2+\f{4}{N}<p<2^*$, where $a_0$ is defined in \eqref{defi of a0};
						\item[(2)]	$\nu>\nu_1~$ for some  $\nu_1=\nu_1(a,b,\alpha,\beta)>0$;
						\item[(3)]  $\al<2$ and  $a\leq b$;
						\item[(4)]  $\beta<2$ and $b\leq a$.
					\end{itemize}
					\el
					
					\begin{proof}
						The proofs of (2)-(4) in Lemma \ref{est3-2} is very similar as that of \cite[Lemma 4.6]{Li-Zou-1}, so we omit the details here.	
						Now we give a simple proof for     (1).	 	
						Recalling the definition of $a_0$ (see \eqref{defi of a0}), by the properties of $e(a)$ in Theorem A, $a_0$ is actually choosed to satisfy
						\[e(a_0)=\f{\nu^{-(N-2)/2}}{N}\SR_{\al,\beta}^{N/2}.\]
						Note that  $e(a)$ is strictly decreasing with respect to $a>0$,     we can see from Lemma \ref{est3-1} that  $$	m(a,b)< \min\lbr{e(a),~e(b)},$$
						since $a\leq a_0$ and  $b\leq a_0$. This completes the proof.
					\end{proof}
					
					\begin{remark}
						{\rm  	We point out that the conclusions of Lemma \ref{est3-1} and  \ref{est3-2} still hold true when $N\ge5$. }
					\end{remark}
					
					\begin{proof}[\bf Proof of Theorem \ref{thm2}]
						With the help of Lemma \ref{PSseq2},  \ref{est3-1} and  \ref{est3-2},  the conclusion is just a combinition of Proposition \ref{PS1} and above lemmas.
					\end{proof}

					\vskip0.23in
					\section{Existence for the repulsive case  \texorpdfstring{$\nu<0$}{} } \label{Sect3}
					In this section, we study the existence of normalized solution of \eqref{mainequ} for the repulsive case  $\nu<0$. Throughout this section, we always work under the following assumptions
					
					\begin{equation} \label{A1}	
						N=3,~ 2<p<2+\frac{4}{N}, \text{ and  }~	 p \leq 	\alpha,\beta \leq 2^*, \alpha+\beta=2^* .
					\end{equation}
					
					Now we give the following compactness result for $\nu<0$.
					\begin{proposition}\label{PS2}
						Assume that  \eqref{A1} holds and
						\be\label{monotonicity v<0}
						m_r(a,b)\le m_r(a_1,b_1)\quad\text{for any}~0<a_1\le a,~0<b_1\le b.
						\ee
						Let $\{(u_n,v_n)\}\subset \TR_r(a,b)$ be a sequence consisting of radial symmetric functions such that
						\be\label{tem3-1 v<0}
						I'(u_n,v_n)+\la_{1,n}u_n+\la_{2,n}v_n\to0\quad \text{for some}~\la_{1,n},\la_{2,n}\in\R,
						\ee
						\be\label{tem3-2 v<0}
						I(u_n,v_n)\to c,\quad P(u_n,v_n)\to0,
						\ee
						\be\label{tem3-3 v<0}
						u_n^-,v_n^-\to0,~\text{a.e. in}~\RN,
						\ee
						with
						\be\label{levelcon2 v<0}
						c<\min \lbr{e(a),~e(b)} \text{ and } c\neq 0.
						\ee
						Then there exists $u,v\in H_{rad}^1(\RN)$ with $u,v>0$  and $\lambda_1,\lambda_2> 0$ such that
						up to a subsequence $(u_n,v_n)\to(u,v)$ in $H^1(\RN)\times H^1(\RN)$ and
						$(\la_{1,n},\la_{2,n})\to(\la_1,\la_2)$ in $\R^2$.
					\end{proposition}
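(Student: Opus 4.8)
The plan is to follow verbatim the three–step scheme of Proposition \ref{PS1}, replacing the sign analysis driven there by $\nu>0$ with the one forced by $\nu<0$; the repulsive sign in fact \emph{simplifies} matters by removing the Sobolev–constant dichotomy. For \emph{Step 1 (boundedness)}, since $2<p<2+\f{4}{N}$ we have $p\ga_p<2$, and I would test the combination $I(u_n,v_n)-\f{1}{2^*}P(u_n,v_n)$: the coupling term $\int_\RN|u_n|^\al|v_n|^\beta\dx$ cancels identically, leaving $\f{1}{N}\sbr{\nt{\nabla u_n}^2+\nt{\nabla v_n}^2}$ minus a positive multiple of $\np{u_n}^p+\np{v_n}^p$. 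By \eqref{tem3-2 v<0} and the Gagliardo–Nirenberg inequality \eqref{GN}, using $\nt{u_n}=a$, $\nt{v_n}=b$, this bounds $\nt{\nabla u_n}^2+\nt{\nabla v_n}^2$ exactly as in the subcritical part of Proposition \ref{PS1}. The multipliers are bounded by testing \eqref{tem3-1 v<0} against $(u_n,0)$ and $(0,v_n)$, so up to a subsequence $(u_n,v_n)\rh(u,v)$ in $H$, $(u_n,v_n)\ra(u,v)$ in $L^q\times L^q$ for $2<q<2^*$ by the compact radial embedding, $(\la_{1,n},\la_{2,n})\to(\la_1,\la_2)$, $u,v\ge0$, and the limit solves $I'(u,v)+\la_1u+\la_2v=0$ with $P(u,v)=0$.

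For \emph{Step 2 (both components survive)}, the sign of $\nu$ is decisive. Writing $\bar v_n=v_n-v$ and combining Brezis–Lieb with $P(u,v)=0$, the Pohozaev remainder becomes $\nt{\nabla u_n}^2+\nt{\nabla\bar v_n}^2+(-\nu)\int_\RN|u_n|^\al|\bar v_n|^\beta\dx=o(1)$, a sum of \emph{nonnegative} terms, so each vanishes and $u_n,\bar v_n\to0$ in $\DR^{1,2}(\RN)$. Suppose $u=0$ for contradiction. If also $v=0$, the same identity forces $\nt{\nabla u_n}^2+\nt{\nabla v_n}^2\to0$ and $\int_\RN|u_n|^\al|v_n|^\beta\dx\to0$, whence $c=\lim I(u_n,v_n)=0$, contradicting \eqref{levelcon2 v<0}. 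If $v\neq0$, the limit equation for $v$ is the purely scalar $-\dl v+\la_2 v=v^{p-1}$; the subcritical Nehari/Pohozaev identities (equivalently the Liouville theorems of \cite[Lemma A.2]{Ikoma} and \cite[Theorem 8.4]{Souplet}, applicable since $N=3$ and $p<2^*$) force $\la_2>0$, and testing with $\bar v_n$ gives $v_n\to v$ in $H^1$, so $\nt{v}=b$; since $\nt{\nabla u_n}\to0$ makes $u_n\to0$ in $L^{2^*}$ and kills the coupling, $c=E(v)=e(b)$ by the uniqueness in Theorem A, contradicting $c<\min\{e(a),e(b)\}$. The case $v=0,\,u\neq0$ is symmetric, so $u\neq0$ and $v\neq0$.

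For \emph{Step 3 (strong convergence)}, with $(\bar u_n,\bar v_n)=(u_n-u,v_n-v)$ the same Brezis–Lieb/Pohozaev computation yields $\nt{\nabla\bar u_n}^2+\nt{\nabla\bar v_n}^2+(-\nu)\int_\RN|\bar u_n|^\al|\bar v_n|^\beta\dx=o(1)$, so $\bar u_n,\bar v_n\to0$ in $\DR^{1,2}(\RN)$, hence in $L^{2^*}$. By the maximum principle $(u,v)$ is a positive solution of the system; the system Pohozaev identity together with the two Nehari identities gives $\la_1\nt{u}^2+\la_2\nt{v}^2=(1-\ga_p)\sbr{\np{u}^p+\np{v}^p}>0$, and the \emph{individual} signs $\la_1,\la_2>0$ follow from the same Liouville arguments as in Proposition \ref{PS1}. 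Finally, testing \eqref{tem3-1 v<0} minus the limit equation against $(\bar u_n,\bar v_n)$, all gradient and nonlinear terms vanish — the Sobolev–critical coupling being controlled by $\ns{\bar u_n}\to0$ and Hölder's inequality — leaving $\la_1\nt{\bar u_n}^2+\la_2\nt{\bar v_n}^2=o(1)$; positivity of the multipliers then gives $\bar u_n,\bar v_n\to0$ in $L^2$, i.e. $(u_n,v_n)\to(u,v)$ in $H$, and $(\la_{1,n},\la_{2,n})\to(\la_1,\la_2)$.

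The main obstacle is the strict positivity of \emph{both} Lagrange multipliers for the coupled limit: the \emph{sum} $\la_1\nt{u}^2+\la_2\nt{v}^2>0$ is immediate from Pohozaev, but separating the signs requires the Liouville–type non-existence results, which fortunately apply in the present range $N=3$, $p<2^*$. The second delicate point is the uniform handling of the Sobolev–critical coupling $\int_\RN|u|^\al|v|^\beta\dx$ in every limit passage, which I would control throughout via the embedding $\DR^{1,2}(\RN)\hookrightarrow L^{2^*}(\RN)$. The monotonicity hypothesis \eqref{monotonicity v<0} is available to exclude any residual mass–splitting, but note that the repulsive sign $\nu<0$ already removes the Sobolev dichotomy that was the crux of Proposition \ref{PS1}, so the compactness analysis here is, if anything, cleaner than in the attractive case.
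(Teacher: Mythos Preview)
Your Steps 1 and 3, and the nonvanishing part of Step 2, match the paper's argument closely. The gap is precisely where you flag ``the main obstacle'': the individual positivity $\la_1,\la_2>0$ for the \emph{coupled} limit. You assert that ``the same Liouville arguments as in Proposition \ref{PS1}'' apply, but they do not. In Proposition \ref{PS1} one has $\nu>0$, so the right-hand side $|u|^{p-2}u+\f{\al\nu}{2^*}|u|^{\al-2}|v|^\beta u$ is manifestly nonnegative, giving $-\Delta u+\la_1u\ge0$; then if $\la_1\le0$ one gets $-\Delta u\ge0$ globally and \cite[Lemma A.2]{Ikoma}, \cite[Theorem 8.4]{Souplet} yield a contradiction. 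Here $\nu<0$, so the coupling term is \emph{negative} and this inequality fails on all of $\RN$; the references you cite cannot be invoked directly.

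The paper repairs this by exploiting the extra hypothesis $p\le\al,\beta$ in \eqref{A1}, which you never use. First, elliptic regularity gives $u,v\in L^\infty$ with $u(x),v(x)\to0$ as $|x|\to\infty$. Writing $-\Delta u=|\la_1|u+\bigl(1+\f{\al\nu}{2^*}|u|^{\al-p}|v|^\beta\bigr)u^{p-1}$ (assuming $\la_1\le0$), the condition $\al\ge p$ makes $|u|^{\al-p}|v|^\beta\to0$ at infinity, so the bracket is eventually positive and $u$ is superharmonic for $|x|>R_0$. The Hadamard three spheres theorem then forces $\min_{|x|=r}u(x)\gtrsim r^{2-N}$, and for $N=3$ this contradicts $u\in L^2(\RN)$. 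Without the restriction $p\le\al,\beta$ (and the accompanying decay-at-infinity step) there is no obvious way to separate the signs, and your Pohozaev identity only gives the sum $\la_1\nt{u}^2+\la_2\nt{v}^2>0$.
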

					
					\begin{proof}
						We divide the proof into three steps.
						\vskip 0.1in
						Step 1.) We prove that $\lbr{(u_n,v_n)}$ is bounded in $H^1(\RN)\times H^1(\RN) $.
						\vskip 0.1in
						By $P(u_n,v_n)\to 0$, one can obtain that for $n$ large enough,
						\begin{align*}
							c+1&\ge I(u_n,v_n)-\f{1}{2^*}P(u_n,v_n)\\
							&=\f{1}{N}\sbr{ \nt{\nabla u_n}^2+\nt{\nabla v_n}^2} -\f{2^*-p}{2^*p}\sbr{\np{u_n}^p+\np{v_n}^p}\\
							&\ge \f{1}{N}\sbr{ \nt{\nabla u_n}^2+\nt{\nabla v_n}^2}-C\sbr{ \nt{\nabla u_n}^2+\nt{\nabla v_n}^2}^{p\ga_p/2},
						\end{align*}
						for some $C>0$, which implies that $\{(u_n,v_n)\}$ is bounded because $p\ga_{p}<2$.

						Moreover, from
						$$\la_{1,n}=-\f{1}{a}I'(u_n,v_n)[(u_n,0)]+o(1)\quad\text{and}\quad
						\la_{2,n}=-\f{1}{b}I'(u_n,v_n)[(0,v_n)]+o(1), $$
						we know that $\la_{1,n},\la_{2,n}$ are also bounded.
						So there exists $u,v\in H^1(\RN)$, $\la_1,\la_2\in\R$ such that up to a subsequence
						$$(u_n,v_n)\rh(u,v)\quad\text{in} ~H^1(\RN)\times H^1(\RN),$$
						$$(u_n,v_n)\ra(u,v)\quad\text{in} ~L^q(\RN)\times L^q(\RN),~\text{for}~2<q<2^*,$$
						$$(u_n,v_n)\ra(u,v)\quad\text{a.e. in}~\RN,$$
						$$(\la_{1,n},\la_{2,n})\to(\la_1,\la_2)\quad\text{in} ~\R^2.$$
						Then \eqref{tem3-1 v<0} and \eqref{tem3-3 v<0} give that
						\be\label{tem3-4 v<0}
						\left\{ 	\begin{aligned}
							&I'(u,v)+\la_1 u+\la_2v=0,\\
							&u\ge0,v\ge0,
						\end{aligned}\right.
						\ee
						and hence $P(u,v)=0$.
						\vskip 0.1in
						
						Step 2.)  We prove that $u\neq 0$, $v\neq 0$  and  $\lambda_1,\lambda_2> 0$.
						\vskip 0.1in
						Without loss of generality,  we assume that $u=0$ by contradiction, then    two cases will occur.
						For the case  $v=0$, then from $P(u_n,v_n)=o(1)$  we obtain that	$ \nt{\nabla u_n}^2+\nt{\nabla v_n}^2=\nu\int_\RN |u_n|^\al|v_n|^\beta+\dx+o(1)$. Note that $\nu \leq 0$, we have 	$\nt{\nabla u_n}^2+\nt{\nabla v_n}^2\to 0$, that is, $u_n\to0 $ and $v_n \to 0$ strongly in $\DR^{1,2}(\RN)$. Then
						\begin{equation}
							c=\lim\limits_{n\to \infty} I(u_n,v_n)=\lim\limits_{n\to \infty} \frac{1}{N} \sbr{\nt{\nabla u_n}^2+\nt{\nabla v_n}^2}=0,
						\end{equation}
						which contradicts to the fact $c\neq 0$.  For the case $v\neq 0$, then by maximum principle we know that $v$ is a solution of
						\be
						\left\{ 	\begin{aligned}
							&-\dl v+\la_2 v=v^{p-1},\quad\text{in}~\RN, \\
							&v>0.
						\end{aligned}\right.
						\ee
						and  by using  \cite[Lemma A.2]{Ikoma} and \cite[Theorem 8.4]{Souplet}, we obtain $\la_2>0$. Let $\bar v_n=v_n-v$. Similar to Proposition \ref{PS1},  we obtain that
						\begin{equation}
							\nt{\nabla u_n}^2+\nt{\nabla \bar v_n}^2=\nu\int_\RN |u_n|^\al|\bar v_n|^\beta\dx+o(1),
						\end{equation}
						therefore $\nt{\nabla u_n}^2+\nt{\nabla \bar v_n}^2 \to 0 $ because $\nu <0$, which implies that $u_n\to0 $ and $v_n \to v$ strongly in $ \DR^{1,2}(\RN)$. Then
						$$\begin{aligned}
							&\quad~~\nt{\nabla\bar v_n}^2+\la_2\nt{\bar v_n}^2\\
							&=\sbr{I'(u_n,v_n)+\la_{1,n}u_n+\la_{2,n}v_n}[(0,\bar v_n)]-\sbr{I'(u,v)+\la_1u+\la_{2}v}[(0,\bar v_n)]+o(1)\\
							&=o(1).\end{aligned}$$
						That is $v_n\to v$ in $H^1(\RN)$. Therefore,
						
						\begin{equation}
							c=\lim\limits_{n\to \infty} I(u_n,v_n)=\lim\limits_{n\to \infty} \frac{1}{N} \sbr{\nt{\nabla u_n}^2+\nt{\nabla v_n}^2}+E(v)= e(b),
						\end{equation}
						which is impossible because $c<\min\lbr{e(a),e(b)}$. Then we have $u\neq 0$ and $v\neq 0$. Hence by the  maximum principle,  $u>0$, $v>0$.
						
						Since $\nu<0$,  in order to show that $\lambda_1,\lambda_2>0$, we will borrow some ideas from   \cite[Lemma A.2]{Ikoma}.                                                                 By contradiction, we may assume that $\lambda_1\leq 0$.
						It follows from \cite[Corollary B.1]{Li-Zou-1} that $(u,v)$ is a  smooth solution, and belongs to $L^{\infty}(\RN)\times L^{\infty}(\RN)$, thus $|\Delta u|, |\Delta v| \in L^{\infty}(\RN)$. A standard   gradient estimates for the Poisson  equation (see \cite{G-T=1983}) shows that $|\nabla u|, |\nabla v|\in L^{\infty}(\RN)$. Then from   $u,v\in L^2(\RN)$,  we get $u(x), v(x) \to 0$, as $|x|\to \infty$.
						Recalling the assumption \eqref{A1}, we have
						\begin{equation}
							\begin{aligned}
								-\Delta u= |\lambda_1|u +(1+\frac{\alpha \nu}{2^*}|u|^{\alpha-p}|v|^\beta )|u|^{p-2}u\geq 0
							\end{aligned}
						\end{equation}
						for $|x|>R_0$ with $R_0>0$ large enough, so $u$ is superharmonic at infinity. From the Hadamard three spheres theorem \cite[Chapter 2]{Hadamard three sphere theorem}, we can see that the function $m(r):=\min_{|x|=r} u(x)$ satisfies
						\begin{equation}
							m(r)\geq \cfrac{m(r_1)(r^{2-N}-r_2^{2-N})+m(r_2)(r_1^{2-N}-r^{2-N})}{r_1^{2-N}-r_2^{2-N}}, \quad \text{ for } R_0\leq r_1<r<r_2.
						\end{equation}
						Since $u(x) \to 0$ as $|x|\to \infty$, we can see that $m(r_2)\to 0 $ as $r_2\to \infty$ and  $r^{N-2} m(r) \geq R_0^{N-2}m(R_0)$ for all $r\geq R_0$. Note that  $N=3$,  we have
						\begin{equation}
							\nt{u}^2 \geq C \int_{R_0}^{+\infty} |m(r)|^2 r^{N-1} {\rm d}  r \geq C \int_{R_0}^{+\infty} r^{ 3-N} {\rm d}  r =+\infty
						\end{equation}
						for some $C>0$, which is impossible because $\nt{u}=a$. Therefore, $\lambda_1>0$. Similarly, we can prove that $\lambda_2> 0$.
						
						\vskip 0.1in
						Step 3.) We prove the strong convergence.
						\vskip 0.1in
						Let $(\bar u_n,\bar v_n)=(u_n-u,v_n-v)$. Similar to before, we can show that $u_n\to u$ and $v_n \to v$ strongly in $ \DR^{1,2}(\RN)$. Note that $\lambda_1,\lambda_2>0$  and
						\begin{equation}
							\begin{aligned}
								&\quad~~\nt{\nabla\bar u_n}^2+\la_1\nt{\bar u_n}^2+\nt{\nabla\bar v_n}^2+\la_2\nt{\bar v_n}^2\\
								&=\sbr{I'(u_n,v_n)+\la_{1,n}u_n+\la_{2,n}v_n}[(\bar u_n,\bar v_n)]-\sbr{I'(u,v)+\la_1u+\la_{2}v}[(\bar u_n,\bar v_n)]+o(1)\\
								&=o(1),\end{aligned}
						\end{equation}
						we obtain $(u_n,v_n)\to(u,v)$ strongly in $H^1(\RN)\times H^1(\RN)$.
						This completes the proof.		
					\end{proof}

					%			\subsection{ The $L^2$-subcritical case $2<p<2+\frac{4}{N}$}
					
					For any $(u,v)\in\TR_r(a,b)$,	we present an important result about $\Phi_{(u,v)}:\R\to\R$
					$$\Phi_{(u,v)}(t):=\f{1}{2}e^{2t}\int_\RN \sbr{|\nabla u|^2+|\nabla v|^2}\dx
					-\f{1}{p}e^{p\ga_pt}\int_{\RN} \sbr{|u|^p+|v|^p}\dx-\f{\nu}{2^*}e^{2^*t}\int_{\RN}|u|^\al|v|^\beta\dx.$$

					\begin{lemma} \label{lemma v<0 subcritical}
						For all $\nu\leq 0$ and $(u,v)\in\TR_r(a,b)$, there exists a unique $t{(u,v)}\in \R$ such that $t{(u,v)}\s (u,v) \in \PR_r(a,b)$; $t{(u,v)}$ is also the unique critical point of $\Phi_{(u,v)}(t)$ and a strict minimum point at a negative level. Moreover, we have
						\begin{itemize}[fullwidth,itemindent=0em]
							\item[(a)]  $\PR_r(a,b)=\PR_r^+(a,b)$; $t\s(u,v)\in\PR_r(a,b)$ if and only if $t=t(u,v)$;
							\item[(b)]  $\Phi_{(u,v)}(t)$ is strictly decreasing on $ (-\infty,t(u,v))$, and  strictly increasing and convex on $ (t(u,v),+\infty)$, and	$$\Phi_{(u,v)}(t(u,v))=\min_{t\in\R}\Phi_{(u,v)}(t);$$
							\item[(c)]  The map $ (u,v) \mapsto$  $t{(u,v)} $  is of class $C^1$.
						\end{itemize}
						
					\end{lemma}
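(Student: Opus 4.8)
The plan is to analyze the fiber map $\Phi_{(u,v)}$ directly, exploiting that for $\nu\le0$ and $2<p<2+\f{4}{N}$ all three monomials cooperate, so that the competition which produces two critical points in the attractive case (cf. Lemma \ref{structure1}) disappears. First I would record that, for $(u,v)\in\TR_r(a,b)$ with $a,b>0$, one has $\nt{\nabla u}^2+\nt{\nabla v}^2>0$ and $\np{u}^p+\np{v}^p>0$ while $\int_\RN|u|^\al|v|^\beta\dx\ge0$, and compute
$$\Phi'_{(u,v)}(t)=e^{2t}\sbr{\nt{\nabla u}^2+\nt{\nabla v}^2}-\ga_p e^{p\ga_p t}\sbr{\np{u}^p+\np{v}^p}-\nu e^{2^*t}\int_\RN|u|^\al|v|^\beta\dx.$$
Factoring out the positive quantity $e^{p\ga_p t}$, I would write $\Phi'_{(u,v)}(t)=e^{p\ga_p t}\zeta(t)$ with
$$\zeta(t):=e^{(2-p\ga_p)t}\sbr{\nt{\nabla u}^2+\nt{\nabla v}^2}-\ga_p\sbr{\np{u}^p+\np{v}^p}-\nu e^{(2^*-p\ga_p)t}\int_\RN|u|^\al|v|^\beta\dx.$$
Since $2<p<2+\f{4}{N}$ forces $0<p\ga_p<2<2^*$, both exponents $2-p\ga_p$ and $2^*-p\ga_p$ are strictly positive; together with $-\nu\ge0$ this makes the $t$-derivatives of the first and third summands nonnegative, so $\zeta'(t)>0$ for every $t$ (the gradient term alone gives strict positivity). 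Hence $\zeta$ is strictly increasing, with $\zeta(t)\to-\ga_p\sbr{\np{u}^p+\np{v}^p}<0$ as $t\to-\infty$ and $\zeta(t)\to+\infty$ as $t\to+\infty$, so $\zeta$ has a unique zero $t(u,v)$, which is the unique zero and the unique critical point of $\Phi'_{(u,v)}$.

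The sign of $\zeta$ then gives $\Phi'_{(u,v)}<0$ on $(-\infty,t(u,v))$ and $\Phi'_{(u,v)}>0$ on $(t(u,v),+\infty)$, so $t(u,v)$ is a strict global minimum, which is the monotonicity content of (b). For the negative level I would use that as $t\to-\infty$ the term of smallest exponent $p\ga_p$ dominates, so $\Phi_{(u,v)}(t)\to0^-$; picking $t_0<t(u,v)$ with $\Phi_{(u,v)}(t_0)<0$ and using strict decrease on $(-\infty,t(u,v))$ yields $\Phi_{(u,v)}(t(u,v))<\Phi_{(u,v)}(t_0)<0$. For convexity on $(t(u,v),+\infty)$, differentiating the factorization gives $\Phi''_{(u,v)}(t)=e^{p\ga_p t}\bigl(p\ga_p\zeta(t)+\zeta'(t)\bigr)$, and on this half-line both $\zeta(t)>0$ and $\zeta'(t)>0$, hence $\Phi''_{(u,v)}>0$.

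For item (a), I would evaluate the second derivative at the critical point, using $\Phi'_{(u,v)}(t(u,v))=0$ to eliminate the gradient term, obtaining
$$\Phi''_{(u,v)}(t(u,v))=\ga_p(2-p\ga_p)e^{p\ga_p t(u,v)}\sbr{\np{u}^p+\np{v}^p}+(2-2^*)\nu e^{2^*t(u,v)}\int_\RN|u|^\al|v|^\beta\dx,$$
which is strictly positive: the first summand is positive, and since $2-2^*<0$ and $\nu\le0$ the second is nonnegative. Thus every element of $\PR_r(a,b)$ lies in $\PR_r^+(a,b)$ (so $\PR_r^0(a,b)=\PR_r^-(a,b)=\emptyset$), giving $\PR_r(a,b)=\PR_r^+(a,b)$; the equivalence $t\s(u,v)\in\PR_r(a,b)\iff t=t(u,v)$ is immediate from $P(t\s(u,v))=\Phi'_{(u,v)}(t)$. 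Finally, for the $C^1$-regularity in (c) I would apply the implicit function theorem to $G(t,(u,v)):=\Phi'_{(u,v)}(t)$, which is jointly $C^1$ (the $L^p$ and coupling functionals are $C^1$ because $p>2$ and $\al,\beta>1$) and satisfies $\pa_t G(t(u,v),(u,v))=\Phi''_{(u,v)}(t(u,v))>0$. I do not anticipate a genuine obstacle: the hypothesis $\nu\le0$ aligns all the signs, so the only point requiring care is the bookkeeping of strict inequalities—chiefly the strict positivity of $\zeta'$ and of $\Phi''_{(u,v)}(t(u,v))$—needed to upgrade the pointwise minimum into the manifold statement $\PR_r(a,b)=\PR_r^+(a,b)$ and into the $C^1$ dependence.
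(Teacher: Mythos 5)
Your proof is correct and follows essentially the same route as the paper: both factor an exponential out of $\Phi'_{(u,v)}$ (you use $e^{p\ga_p t}$, the paper uses $e^{2t}$), deduce a strictly increasing auxiliary function with a unique zero from $p\ga_p<2<2^*$ and $\nu\le 0$, and then obtain (a) from the same algebraic identity $\ga_p(2-p\ga_p)\nm{\cdot}_p^p+(2^*-2)|\nu|\int|u|^\al|v|^\beta>0$ and (c) from the implicit function theorem. If anything, your write-up is slightly more complete, since you explicitly verify the convexity claim in (b) and the negativity of the minimum level, which the paper leaves implicit.
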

					
					\begin{proof}
						Observe that
						\begin{equation}
							\begin{aligned}
								\Phi'_{(u,v)}(t)&=e^{2t}\mbr{ \int_\RN \sbr{|\nabla u|^2+|\nabla v|^2}\dx-\ga_pe^{(p\ga_p-2)t}\int_{\RN} \sbr{|u|^p+|v|^p}\dx+|\nu| e^{(2^*-2)t}\int_{\RN}|u|^\al|v|^\beta\dx} \\
								& =: e^{2t}G_{(u,v)}(t),
							\end{aligned}
						\end{equation}
						one can easily see that $G_{(u,v)}(t)$ is strictly increasing   because $p\gamma_p<2<2^*$. 	 	
						A direct computation shows that $G_{(u,v)}(-\infty)=-\infty$ and $G_{(u,v)}(+\infty)=+\infty$, then $G_{(u,v)}$ has a unique zero $t(u,v)$, which is also a unique zero of $\Phi'_{(u,v)}$.   And we can see that  $t\s(u,v)\in\PR_r(a,b)$ if and only if $t=t(u,v)$. Moreover, $\Phi'_{(u,v)}(t)<0$ for $t<t(u,v)$ and $\Phi'_{(u,v)}(t)>0$ for $t>t(u,v)$, which implies that  $t(u,v)$ is a  strict minimum point of $\Phi_{(u,v)} $ at a negative level.
						For any $(u,v)\in \PR_r(a,b)\setminus\PR_r^+(a,b) $, there hold
						\begin{equation}
							\int_\RN \sbr{|\nabla u|^2+|\nabla v|^2}\dx-\ga_p\int_{\RN} \sbr{|u|^p+|v|^p}\dx+|\nu| \int_{\RN}|u|^\al|v|^\beta\dx=0,
						\end{equation}
						and
						\begin{equation}
							2\int_\RN \sbr{|\nabla u|^2+|\nabla v|^2}\dx-\ga_p p\ga_{p}\int_{\RN} \sbr{|u|^p+|v|^p}\dx+2^*|\nu| \int_{\RN}|u|^\al|v|^\beta\dx\leq 0.
						\end{equation}
						Therefore, we have
						\begin{equation}
							0\geq (2-p\ga_{p})\ga_{p}\int_{\RN} \sbr{|u|^p+|v|^p}\dx+(2^*-2)|\nu| \int_{\RN}|u|^\al|v|^\beta\dx,
						\end{equation}
						which is impossible because  $p\ga_{p}<2<2^*$. Consequently              $\PR_r(a,b)=\PR_r^+(a,b)$.   Finally,   the implicit function theorem implies that  the map $ (u,v) \mapsto$  $t{(u,v)} $  is of class $C^1$.
					\end{proof}
					
					\begin{lemma}\label{lemma 3.2}
						For all $\nu\leq 0$ and $2<p<2+\frac{4}{N}$,  we have the following.
						\begin{itemize}[fullwidth,itemindent=0em]
							\item[(a)]  For $0<a_1\leq a$ and $0<b_1\leq b
							$, then $$m_r(a,b)\leq m_r(a_1,b_1)+m_r(a-a_1,b-b_1)\leq  m(a_1,b_1);$$
							\item[(b)]   For all $a,b>0$, we have $$m_r(a,b)\leq e(a)+e(b)<\min\lbr{e(a),e(b)},$$ where $e(a),e(b) $ are given by \eqref{defi of e(a)}.
						\end{itemize}
					\end{lemma}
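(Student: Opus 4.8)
The plan rests on two facts already established for $\nu\le0$. By Lemma \ref{lemma v<0 subcritical}, for every $(u,v)\in\TR_r(a,b)$ the fiber $t\mapsto\Phi_{(u,v)}(t)$ has a unique global minimizer $t(u,v)$ with $t(u,v)\s(u,v)\in\PR_r(a,b)$; consequently $m_r(a,b)=\inf_{\TR_r(a,b)}I$, so it suffices to test $I$ against suitable radial pairs carrying the prescribed masses. Moreover the same lemma places each fiber minimum at a strictly negative level, whence $m_r(c,d)\le0$ for all $c,d>0$.

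For part (a) I would argue by a cut-and-paste construction analogous to Lemma \ref{lemma 2.2}(2) and Lemma \ref{tem3-6-5}(1). Fix $\e>0$ and pick radial near-optimizers $(u_1,v_1)\in\TR_r(a_1,b_1)$ and $(u_2,v_2)\in\TR_r(a-a_1,b-b_1)$ with energies within $\e$ of $m_r(a_1,b_1)$ and $m_r(a-a_1,b-b_1)$; after truncation I may take both compactly supported. Using the $L^2$-invariant dilation $t\s(\cdot)$ together with a radial displacement I would push the second pair into a far annulus disjoint from the support of the first and renormalize the $L^2$-norms so that the glued pair $(U,V)$ lies in $\TR_r(a,b)$. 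Because the supports are disjoint, the gradient, the $L^p$ and the coupling integrals all split additively and the cross coupling term vanishes, so $I(U,V)=I(u_1,v_1)+I(u_2,v_2)+o(1)$. Since $m_r(a,b)\le I(U,V)$, letting $\e\to0$ and sending the displacement parameter to infinity yields $m_r(a,b)\le m_r(a_1,b_1)+m_r(a-a_1,b-b_1)$. The remaining inequality is then immediate from the negativity noted above: $m_r(a-a_1,b-b_1)\le0$ forces $m_r(a_1,b_1)+m_r(a-a_1,b-b_1)\le m_r(a_1,b_1)$.

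For part (b) I would specialize this scheme to the decoupled configuration. Let $u_a$ and $v_b$ be the radial scalar ground states of Theorem A, so that $E(u_a)=e(a)<0$ and $E(v_b)=e(b)<0$ (recall $2<p<2+\f{4}{N}$, so Theorem A(1) applies). Inserting $u_a$ into the first slot and a far-displaced copy of $v_b$ into the second, renormalizing the masses back to $a$ and $b$, the coupling integral $\int_\RN|u|^\al|v|^\beta\dx$ becomes negligible, so the energy of the resulting admissible radial pair equals $E(u_a)+E(v_b)+o(1)=e(a)+e(b)+o(1)$. As $m_r(a,b)=\inf_{\TR_r(a,b)}I$ does not exceed the energy of this pair, I conclude $m_r(a,b)\le e(a)+e(b)$. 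The strict inequality $e(a)+e(b)<\min\lbr{e(a),e(b)}$ then follows solely from $e(a),e(b)<0$, completing the stated chain.

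The conceptual core of both parts is the single gluing step, and its delicate point is the radial geometry. In contrast with the full-space setting, where a rigid translation separates supports without altering the energy, displacing a radial profile interacts with the weight $r^{N-1}$; the two estimates needing care are (i) restoring the exact $L^2$ constraints after the displacement while keeping the energy defect $o(1)$, and (ii) verifying that the fiber projection onto $\PR_r(a,b)$ does not lift the energy above the claimed level. I expect this bookkeeping, rather than any new idea, to be the main obstacle, and it can be handled with the same truncation-plus-dilation estimates already employed in the gluing lemmas of the $\nu>0$ case.
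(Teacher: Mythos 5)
Your skeleton coincides with the paper's: both arguments use Lemma \ref{lemma v<0 subcritical} to reduce matters to testing $I$ on $\TR_r(a,b)$ (the fiber projection onto $\PR_r(a,b)$ never raises the energy, so indeed $m_r(a,b)=\inf_{\TR_r(a,b)}I$), both then glue two near-optimizers with disjoint supports, and both finish using $m_r\le 0$ and $e(a),e(b)<0$. The genuine gap in your proposal is precisely the step you dismiss as bookkeeping: a mass-preserving radial displacement cannot transport \emph{negative} energy to a far annulus. Concretely, if $w$ is radial with $\nt{w}$ fixed, $\nm{w}_{H^1}$ bounded, and $\operatorname{supp}(w)\subset\{|x|\ge R\}$, then the Strauss estimate $|w(x)|\le C|x|^{-(N-1)/2}\nm{w}_{H^1}$ gives
$$\np{w}^p\le \nm{w}_{\iy}^{p-2}\,\nt{w}^2\le C\,R^{-(N-1)(p-2)/2}\longrightarrow 0\qquad (R\to\iy),$$
hence $E(w)\ge -\f1p\np{w}^p\ge -o_R(1)$; equivalently, renormalizing the mass after displacement forces the amplitude to scale like $R^{-(N-1)/2}$, which kills the $L^p$-term while the gradient term survives. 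Since $m_r(a-a_1,b-b_1)$ and $e(b)$ are fixed strictly negative numbers, no far-displaced radial piece can have energy near them, and the coupling term (which has positive sign when $\nu<0$) only makes things worse. Your glued pair therefore has energy at best $m_r(a_1,b_1)+o(1)$, resp.\ $e(a)+o(1)$, so the construction yields only the monotonicity $m_r(a,b)\le m_r(a_1,b_1)$ and the non-strict bound $m_r(a,b)\le\min\{e(a),e(b)\}$; it does not prove the subadditivity in (a), does not prove $m_r(a,b)\le e(a)+e(b)$ in (b), and does not provide the strict inequality that Proposition \ref{PS2} and Theorem \ref{thm3} actually require. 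The ``energy defect'' in your item (i) is not $o(1)$: in the limit it equals $|m_r(a-a_1,b-b_1)|$, resp.\ $|e(b)|$.

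For comparison, the paper does not attempt any radial displacement: it separates the supports by \emph{translation}, invoking translation invariance of $I$. That keeps the energies exactly additive, but the translated-and-glued pair is no longer radial, so as written the paper's test function only bounds $m(a,b)$, not $m_r(a,b)$ (Lemma \ref{lemma v<0 subcritical} is being applied to a pair outside $\TR_r(a,b)$). In other words, the delicate point you identified is real and is exactly where the paper's own proof is loose — but it is not repairable by finer bookkeeping inside the radial class. Indeed, by the compact embedding of $H^1_{rad}$ into $L^q(\RN)$ for $2<q<2^*$, together with the attainment, uniqueness and strict monotonicity of $e(\cdot)$ in Theorem A, any radial sequence $(u_n,v_n)\in\TR_r(a,b)$ with $E(u_n)\to e(a)$ and $E(v_n)\to e(b)$ converges (up to subsequences and signs) strongly in $H$ to the pair of Schwarz-symmetric minimizers, which are everywhere positive; hence $\int_\RN |u_n|^\al|v_n|^\beta\dx$ stays bounded away from $0$, and consequently $m_r(a,b)>e(a)+e(b)$ whenever $\nu<0$. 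So no separation-type argument in the radial class can deliver inequality (b) as stated; a correct repair must either leave the radial class and then relate $m$ to $m_r$, or replace separation-in-space by a different mechanism, for instance flattening one component with the $L^2$-invariant dilation $t\s$ and exploiting the hypothesis $p\le\al,\beta$ in \eqref{tem-101} to push the level strictly below $\min\{e(a),e(b)\}$. Your proposal, as it stands, does neither.
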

					\begin{proof}
						(a) By the density of $C_0^\infty(\RN)$ into $H^1(\RN)$,  for arbitrary $\varepsilon>0$, we can choose radial symmetric functions $(u_1,v_1) $, $(u_2,v_2) \in C_0^\infty(\RN) \times C_0^\infty(\RN)$  with $\nt{u_1}=a_1,\nt{v_1}=b_1$ and $\nt{u_2}=a-a_1,\nt{v_2}=b-b_1$ such that
						\begin{equation}
							\begin{aligned}
								&I(u_1,v_1)\leq m_r(a_1,b_1)+\frac{\varepsilon}{2},\\
								&I(u_2,v_2)\leq m_r(a-a_1,b-b_1)+\frac{\varepsilon}{2}.
							\end{aligned}
						\end{equation}
						Observe that $I(u,v)$ is invariant by translation,  then we may assume that  $supp(u_1)\cap supp(u_2)=\emptyset$, $supp(u_1)\cap supp(v_2)=\emptyset$, $supp(v_1)\cap supp(u_2)=\emptyset$ and $supp(v_1)\cap supp(v_2)=\emptyset$, which implies that  $\nt{u_1+u_2}=a$ and $\nt{v_1+v_2}=b$. By Lemma \ref{lemma v<0 subcritical}, there exists a unique strict minimum point $t=t(u_1+u_2,v_1+v_2)\in \R$ such that $t\s (u_1+u_2,v_1+v_2)\in \PR_r(a,b)$ and therefore
						\begin{equation}
							\begin{aligned}
								m(a,b)&\leq I(t\s (u_1+u_2,v_1+v_2)) \leq I( u_1+u_2,v_1+v_2)\\
								&=I( u_1 ,v_1 )+I(  u_2, v_2)\leq m_r(a_1,b_1)+m_r(a-a_1,b-b_1)+\varepsilon
							\end{aligned}
						\end{equation}
						Then by the arbitrariness of $\varepsilon$, we obtain that $m_r(a,b)\leq m_r(a_1,b_1)+m_r(a-a_1,b-b_1)$.  Finally, since $2<p<2+\frac{4}{N}$, by Lemma \ref{lemma v<0 subcritical} we know that  $m_r(a-a_1,b-b_1)\leq 0$ for $0<a_1\leq a$ and $0<b_1\leq b$, and then $m_r(a,b)\leq m_r(a_1,b_1) $.

						\vskip 0.1in

						(b) For $2<p<2+\frac{4}{N}$, it follows from \cite{Soave=JDE=2020} that $e(a)$ and $e(b)$  are attained, then for arbitrary $\varepsilon>0$ we can  take two radial symmetric functions $u_0,v_0\in C_0^\infty(\RN) $ with $\nt{u_0}=a$ and $\nt{v_0}=b$ such that
						\begin{equation}
							E(u_0)\leq e(a)+\varepsilon, \quad 	E(v_0)\leq e(b)+\varepsilon.
						\end{equation}
						We can further assume  that  the supports of $u_0$ and $v_0$ are disjoint, then $I(u_0,v_0)=E(u_0)+E(v_0)$. Since $(u_0,v_0)\in \TR_r(a,b)$, by Lemma \ref{lemma v<0 subcritical}, there exists  $t{(u_0,v_0)}\in \R$ such that $t{(u_0,v_0)}\s (u_0,v_0) \in \PR_r(a,b)$ and $\Phi_{(u_0,v_0)}(t(u_0,v_0))=\min_{t\in\R}\Phi_{(u_0,v_0)}(t).$ Hence, we have
						\begin{equation}
							\begin{aligned}
								m(a,b)&=\inf_{(u,v)\in\PR(a,b)} I(u,v) \leq I(t{(u_0,v_0)}\s (u_0,v_0)) \\
								&=\Phi_{(u_0,v_0)}(t(u_0,v_0)) \leq \Phi_{(u_0,v_0)}(0)\\
								&=I(u_0,v_0)=E(u_0)+E(v_0)\\&\leq e(a)+e(b)+2\varepsilon .
							\end{aligned}
						\end{equation}
						Then  we  obtain $	m_r(a,b) \leq  e(a)+e(b)$ from the arbitrariness of $\varepsilon$. Finally, since $2<p<2+\frac{4}{N}$,  it follows from   \cite{Soave=JDE=2020}  that  $ e(a)<0 $ and $e(b)<0$, and then $m_r(a,b)< \min\lbr{e(a),e(b)} $. This completes the proof.
					\end{proof}

					\begin{proof}[\bf Proof of Theorem \ref{thm3}]
						
						Since $2<p<2+\frac{4}{N}$ and $\nu<0$, one can easily check that $-\infty <m_r(a,b)<0$, then  we take a radial minimizing sequence $\lbr{(\hat u_n, \hat v_n) } \in \PR_r(a,b)  $ such that
						$	I(\hat u_n,\hat v_n)=m(a,b)+o(1).$ Then passing to $\sbr{ | \hat u_n|,| \hat v_n|}$, we may assume that $(\hat u_n, \hat v_n)$  is  nonnegative. By using the Ekeland  variational
						principle (see \cite{Ekeland varitional principle}), we can take a   radially symmetric Palais-Smale sequence $(u_n,v_n) $ for $I|_{\TR_r(a,b)} $ such that $\left\| (u_n,v_n) -(\hat u_n,\hat v_n)\right\|_{H_{rad}} \to 0	$ as $n\to \infty$. Then
						\begin{equation}
							P(u_n,v_n)=P(\hat u_n, \hat v_n)+o(1), \quad \text{ as } n\to \infty.
						\end{equation}	
						By Lagrange multiple rules,  there exist $\lambda_{1,n},\lambda_{2,n}\in \R$ such that  $	I'(u_n,v_n)+\la_{1,n}u_n+\la_{2,n}v_n\to0$ as $n\to \infty$. Then  combining Lemma \ref{lemma 3.2} and Proposition \ref{lemma v<0 subcritical} for  $c=m_r(a,b)$, we obtain that  $(u_n,v_n)\to(u,v)$ in $  H_{rad}$ and
						$(\la_{1,n},\la_{2,n})\to(\la_1,\la_2)$ in $\R^2$. By the strong convergence, $(u_n,v_n) \in \PR_r(a,b)$ is a  normalized  solution of \eqref{mainequ}.
					\end{proof}
					
					\section{Nonexistence  for the defocusing case \texorpdfstring{$\omega_{1}<0$}{} and \texorpdfstring{$\omega_2<0$}{}} \label{Sect4}
					In this section, we will prove the Nonexistence results of normalized solution Theorem \ref{thm5}.
					
					\begin{proof}[\bf Proof of Theorem \ref{thm5}]
						Assume by contradiction that the system  \eqref{mainequ}  has a  positive normalized solution  $(u,v)\in H^1(\RN)\times H^1(\RN) $ with $\nt{u}^2=a^2,~\nt{v}^2=b^2$.  Then from the Pohozaev identity, we obtain that
						\begin{equation}
							\lambda_1 a^2+\lambda_2b^2=(1-\ga_{p})\sbr{ \omega_{1}\np{u}^p+ \omega_{2}\np{v}^p}<0,
						\end{equation}
						since $\omega_1,\omega_2<0$, $\ga_{p}<1$.  Then one of $\lambda_1$ and $\lambda_2$ is strictly less than zero. Without loss of generality, we may assume that $\lambda_1<0$. By a similar argument as used in that of  Step 2 in Proposition \ref{PS2},  we have $u(x), v(x) \to 0$, as $|x|\to \infty$, which together with assumption \eqref{f13} implies
						\begin{equation}
							-\Delta u= (|\lambda_1| + |u|^{p-2}+\frac{\alpha \nu}{2^*}|u|^{\alpha-2}|v|^\beta )u\geq  \frac{|\lambda_1| }{2}u>0.
						\end{equation}
						for $|x|>R_0$ with $R_0>0$ large enough, so $u$ is superharmonic at infinity. Then using the Hadamard three spheres theorem \cite[Chapter 2]{Hadamard three sphere theorem}, we can proceed with the argument presented in Step 2 of  Proposition \ref{PS2} that $r^{N-2} m(r) \geq R_0^{N-2}m(R_0)$ for all $r\geq R_0$, where $m(r):=\min_{|x|=r} u(x)$.
						Then
						\begin{equation}\label{f14}
							\left\|u \right\|_{q}^q \geq C \int_{R_0}^{+\infty} |m(r)|^q r^{N-1} {\rm d}  r \geq C \int_{R_0}^{+\infty} r^{ q(2-N)+N-1} {\rm d}  r
						\end{equation}
						for some $C>0$. {If $N=3,4$,  we choose $q=2$ in  \eqref{f14} and obtain   that
							\begin{equation}
								\left\|u \right\|_{2}^2\geq C \int_{R_0}^{+\infty} r^{ 3-N} {\rm d}  r=+\infty,
							\end{equation}
							which is a contradiction. If $N\geq 5$, it can be observed that $q $ can not be chosen as 2. In order to get a contradiction, we need  choose $q\leq N/(N-2)$. However, $u\in H^1(\RN)$ could not directly imply that $u \in L^q(\RN)$ for some $0<q\leq N/(N-2) $. Therefore,  we need  such additional assumption to get a contradiction. This completes the proof.}
					\end{proof}

					\section{The Sobolev critical case \texorpdfstring{$p=\al+\beta=2^*$}{} }	 \label{Sect5}
					In this section, we  consider the system \eqref{mainequ} with  Sobolev critical  exponent $p=\al+\beta=2^*$ and  prove the existence Theorem  \ref{thm6}  and nonexistence results Theorem \ref{thm4}.
					
					\begin{proof}[\bf Proof of Theorem \ref{thm6}]
						Recalling the definition of $	\Phi_{(u,v)}(t)$ in \eqref{fiber},   for any $\sbr{u,v}\in \TR(a,b)$, we have
						\be
						\begin{aligned}
							\Phi_{(u,v)}(t)=I(t\s (u,v))=&\f{1}{2}e^{2t}\sbr{\nt{\nabla u}^2+\nt{\nabla v}^2}
							-\f{1}{2^*}e^{2^* t} \sbr{ \left\| u\right\|_{2^*}^{2^*}+\left\| v\right\|_{2^*}^{2^*}    +\nu \int_{\RN}|u|^\al|v|^\beta \dx},
						\end{aligned}
						\ee
						Obviously, the function $	\Phi_{(u,v)}(t)$  has a unique critical point $t(u,v)\in \R$, which is also the unique  strict maximum point, such that $t(u,v)\s (u,v)\in \mathcal{P}(a,b)$. More precisely,
						\begin{equation}
							e^{ t(u,v)}=\sbr{\frac{ \nt{\nabla u}^2+\nt{\nabla v}^2}{ \left\| u\right\|_{2^*}^{2^*}+\left\| v\right\|_{2^*}^{2^*}    +\nu \int_{\RN}|u|^\al|v|^\beta\dx}}^{\frac{1}{2^*-2}}.
						\end{equation}
						We claim that $\mathcal{P}^+(a,b)\cup \mathcal{P}^0(a,b)=\emptyset$, implying that  $\mathcal{P}(a,b)=\mathcal{P}^-(a,b)$. Indeed, for any $(u,v)\in\mathcal{P}^+(a,b)\cup \mathcal{P}^0(a,b) $, we have
						\begin{equation}
							2\sbr{\nt{\nabla u}^2+\nt{\nabla v}^2}\geq 2^*\sbr{\left\| u\right\|_{2^*}^{2^*}+\left\| v\right\|_{2^*}^{2^*}    +\nu \int_{\RN}|u|^\al|v|^\beta \dx}=2^*\sbr{\nt{\nabla u}^2+\nt{\nabla v}^2},
						\end{equation}
						implying that $\nt{\nabla u}^2+\nt{\nabla v}^2=0$. which is a contradiction to   $(u,v)\in \TR(a,b)$. Therefore, we prove that the claim is true and
						\begin{equation}
							m(a,b)= \inf_{(u,v)\in\PR(a,b)} I(u,v)=\inf_{(u,v)\in\PR^-(a,b)} I(u,v) .
						\end{equation}
						Moreover, it is standard to show that
						\begin{equation}
							\begin{aligned}
								m(a,b)&=\inf_{(u,v)\in\PR^-(a,b)} I(u,v) = \inf_{(u,v)\in\TR(a,b)} \max_{t\in\R} I(t\s (u,v))	\\
								&=\inf_{(u,v)\in\TR(a,b)} I(t(u,v)\s (u,v))\\
								&=\frac{1}{N}\inf_{(u,v)\in\TR(a,b)} \frac{ \sbr{\nt{\nabla u}^2+\nt{\nabla v}^2}^{\frac{2^*}{2^*-2}}}{\sbr{ \left\| u\right\|_{2^*}^{2^*}+\left\| v\right\|_{2^*}^{2^*}    +\nu \int_{\RN}|u|^\al|v|^\beta \dx}^{\frac{2
										}{2^*-2}}     }.
							\end{aligned}
						\end{equation}
						Now we claim that $ m(a,b)=\mathcal{C} $, where $\mathcal{C}$ is  defined in \eqref{defi of C}.  In fact,  we deduce from \eqref{defi of C t} that $\mathcal{C}\leq	m(a,b) $. On the other hand,  it follows from \cite{HeYang2018} that the ground state  level $\mathcal{C}$  can be achieved by
						\begin{equation}\label{f21}
							\sbr{\widetilde{U}_{\e,y},\widetilde{V}_{\e,y}}=\sbr{ \tilde{x}_1 F_{max}^{-\frac{N-2}{4}}U_{\e,y}, \tilde{x}_2 F_{max}^{-\frac{N-2}{4}}U_{\e,y}}\in \mathcal{D},
						\end{equation}  which is given by \eqref{f15}. Observe that the Aubin-Talenti bubble $U_{\e,y}  $ belongs to $L^2(\RN)$ if and only if   $N\geq 5$
						Then for the unique choice of $\varepsilon_0>0$, we have $\nt{\widetilde{U}_{\e_0,y}}=a$ and $\nt{\widetilde{V}_{\e_0,y}}=b$ since we assume that  $N\geq 5$, $\tilde{x}_1\neq 0$, $\tilde{x}_2\neq 0$ and $a\tilde{x}_2=b\tilde{x}_1$. Therefore,  by \eqref{defi of C t}, \eqref{f22} and  \eqref{f21}
						\begin{equation}
							\begin{aligned}
								m(a,b) &\leq   \frac{ \sbr{\nt{\nabla \widetilde{U}_{\e,y}}^2+\nt{\nabla \widetilde{V}_{\e,y}}^2}^{\frac{2^*}{2^*-2}}}{N\sbr{ \left\| \widetilde{U}_{\e,y}\right\|_{2^*}^{2^*}+\left\| \widetilde{V}_{\e,y}\right\|_{2^*}^{2^*}    +\nu \int_{\RN}|\widetilde{U}_{\e,y}|^\al|\widetilde{V}_{\e,y}|^\beta\dx}^{\frac{2
										}{2^*-2}}     }=\frac{1}{N}F_{max}^{-\frac{N-2}{2}}\mathcal{S}^{\frac{N}{2}}=\mathcal{C}.\\
							\end{aligned}
						\end{equation}
						Therefore, the claim is true and $ m(a,b)=\mathcal{C}$. That is, the minimization of $I(u,v)$ on $\mathcal{P}(a,b)$ is equivalent to the minimization of  $I(u,v)$ on $\mathcal{N}$. It follows from \eqref{f21} that the minimization of  $I(u,v)$ on $\mathcal{N}$ is  $\sbr{\widetilde{U}_{\e_0,y},\widetilde{V}_{\e_0,y}}$ for the unique choice of $\varepsilon_0>0$ such that $\nt{\widetilde{U}_{\e_0,y}}=a$ and $\nt{\widetilde{V}_{\e_0,y}}=b$. Hence the system \eqref{mainequ} has a normalized ground state of the form   $\sbr{\widetilde{U}_{\e_0,y},\widetilde{V}_{\e_0,y}}$ solving the system \eqref{mainequ} with $\lambda_1=\lambda_2=0$. It completes the proof.

					\end{proof}

					\begin{proof}[\bf Proof of Theorem \ref{thm4}]
						Here, we give a brief proof of Theorem 	\ref{thm4}. Indeed, let $(u,v)$ be a positive normalized solution of \eqref{mainequ} with $N=3,4$ and $p=\al+\beta=2^*$,
						then using Pohozaev identity, one can easily obtain that
						\be\label{tem3-21-1}
						\la_1 \nt{u}^2+\la_2 \nt{v}^2=0;
						\ee
						since $u,v$ satisfy
						$$-\Delta u+\la_1u\ge 0\quad\text{and}\quad -\Delta v+\la_2v\ge0,$$
						by \cite[Lemma A.2]{Ikoma}  we obtain immediately $\la_1,\la_2>0$.
						It follows from \eqref{tem3-21-1} that $u=v=0$, which is a contradiction.
					\end{proof}

							\vs
					\noindent {\bf Acknowledgement} \quad			
					
				  Houwang Li is supported by the postdoctoral foundation of BIMSA. 	Tianhao Liu is supported by the Postdoctoral Fellowship Program
						of CPSF  (GZB20240945).  Wenming Zou is supported by  National Key R\&D Program of China (Grant 2023YFA1010001) and NSFC(12171265).

					\vs
					\noindent {\bf Data availability statement} \quad 
					
					No data was used for the research described in the article.

					\vs
					\noindent {\bf Conflict of interest statement } \quad

					On behalf of all authors, the corresponding author states that there is no conflict of interest.

					\vskip0.26in

					%\bibliographystyle{abbrv}
					%\bibliography{ref_normalized-solution}

					%%%%%%%%%%%%%%%%%%%%%%%%%%%%%%%%%%%%%%%%%%%%%%%%%%%%%%%%%%%%%%%%%%%%%%%%%%%%%%%%%%%%%%%%%%%%%%%%%%%%%%%%%%%%%%%%%%%%%%%%%%%
					
				\end{document}